\newtheorem{assumption}{Assumption}
\newcommand{\DF}[2]{{\displaystyle\frac{#1}{#2}}}
\newcommand{\innerP}[2]{\langle {#1},{#2} \rangle}
\newcommand{\dist}{\mathrm{dist}}
\newcommand{\dom}{\mathrm{dom}}
\newcommand{\lev}{\mathrm{lev}}
\title{First-order algorithms for a class of fractional optimization problems
	\thanks{This research is supported in part by the Natural Science Foundation of China under grants 11971499 and 11701189,
	and by Guangdong Provincial Key Laboratory of Computational Science at Sun Yat-sen University (2020B1212060032).}
}
\author{
	Na Zhang \thanks{Department of Applied Mathematics, College of Mathematics and Informatics, South China Agricultural University, Guangzhou 510642, P. R. China.}
	\and
	Qia Li \thanks{School of Computer Science and Engineering, Sun Yat-sen University, Guangzhou 510275, P. R. China, Guangdong Province Key Laboratory of Computational Science  (liqia@mail.sysu.edu.cn). Questions, comments, or corrections to this document may be directed to that email address.}
}
\begin{document}
%\large
%\pagecolor{doushalv}

\maketitle
\begin{abstract}
	We consider in this paper a class of single-ratio fractional minimization problems, in which the numerator of the objective is the sum of a nonsmooth nonconvex function and a smooth nonconvex function while the denominator is a nonsmooth convex function. In this work, we first derive its first-order necessary optimality condition, by using the first-order operators of the three functions involved. Then we develop first-order algorithms, namely, the proximity-gradient-subgradient algorithm (PGSA), PGSA with monotone line search (PGSA\_ML) and PGSA with nonmonotone line search (PGSA\_NL). It is shown that any accumulation point of the sequence generated by them is a critical point of the problem under mild assumptions. Moreover, we establish global convergence of the sequence generated by PGSA or PGSA\_ML and analyze its convergence rate, by further assuming the local Lipschitz continuity of the nonsmooth function in the numerator, the smoothness of the denominator and the Kurdyka-{\L}ojasiewicz (KL) property of the objective. The proposed algorithms are applied to the sparse generalized eigenvalue problem associated with a pair of symmetric positive semidefinite matrices and the corresponding convergence results are obtained according to their general convergence theorems. We perform some preliminary numerical experiments to demonstrate the efficiency of the proposed algorithms.
\end{abstract}
\begin{keywords}
	fractional optimization, first-order algorithms, proximity algorithms, sparse generalized eigenvalue problem, KL property
	%nonconvex approximation, capped $\ell_p$ functions, composite $\ell_0$ regularization
\end{keywords}
\begin{AMS}
	90C26, 90C30, 65K05
\end{AMS}

%=================================  main body  =========================================================
\pagestyle{myheadings}
\thispagestyle{plain}
%\markboth{NA ZHANG AND QIA LI}{FIRST-ORDER ALGORITHMS FOR A CLASS OF FRACTIONAL OPTIMIZATION PROBLEM}
\section{Introduction}\label{section:introduction}
A fractional optimization problem is the problem which minimizes or maximizes an objective involving one or several ratios of functions. Fractional optimization problems arise from various applications in many fields, such as economics \cite{konno1989bond, pardalos1994use}, wireless communication \cite{Shen-Yu:2018IEEE, Zappone-Bjornson-Sanguinetti-Jorswieck:2017IEEE, Zappone-Sanguinetti-Debbah:2017IEEE}, artificial intelligence \cite{Baldacci-Lim-Traversi-Calvo:2018arXic, Hoyer2004Non} and so on.
Four categories of factional optimization problems, concerning minimizing a single ratio of two functions over a closed convex set, have been extensively studied in the literature. They are named according to the functions in the numerator and denominator: linear or quadratic fractional problems if both functions are linear or quadratic; convex-concave fractional problems if the numerator is convex and the denominator is concave; convex-convex fractional problems if both functions are convex. We refer the readers to \cite{Schaible:HRP1995, Schaible1983Fractional, stancu2012fractional}, for an overview on the single-ratio fractional optimization.

In this paper, we consider a class of single-ratio fractional minimization problems in the form of
\begin{equation}\label{problem:root}
\min\; \left\{\frac{f(x)+h(x)}{g(x)}:x \in \Omega\right\},
\end{equation}
where $f:\mathbb{R}^n\rightarrow\overline{\mathbb{R}}:=[-\infty,+\infty]$ is proper, lower semicontinuous, bounded below on $\mathbb{R}^n$ and continuous on its domain, $g:\mathbb{R}^n\rightarrow\mathbb{R}$ is convex, $h:\mathbb{R}^n\rightarrow\mathbb{R}$ is Lipschitz differentiable with a Lipschitz constant $L>0$, and the set $\Omega := \{x \in \mathbb{R}^n:g(x) \neq 0\}$ is nonempty.
Moreover, we assume that $f+h$ is non-negative on $\mathbb{R}^n$ and $g$ is positive on $\dom(f)\cap \Omega$.
Also, problem \eqref{problem:root} is assumed to have at least one optimal solution. It is obvious that both $f$ and $h$ are possibly nonconvex, while $f$ and $g$ can be nonsmooth. Problem \eqref{problem:root} does not belong to any of the four categories of fractional minimization problems aforementioned. This class of optimization problems subsumes a wide range of application models, e.g., the sparse generalized eigenvalue problem (SGEP)\cite{beck2013sparsity, Tan-Wang-Liu-Zhang:JRSS2018} and the $\ell_1/\ell_2$ sparse signal recovery problem \cite{Lou-Dong-Wang:2019SIAM}.

Now we turn to the algorithmic aspect of problem \eqref{problem:root}. To the best of our knowledge, this problem has seldom been studied in the literature and existing methods in general fractional optimization are not suitable for solving it. Global optimization methods, e.g., branch and bound algorithms \cite{BensonFractional:2006EJOR,KonnoMinimization}, play an important role in directly solving fractional optimization problems. However, the variable $x$ of problem \eqref{problem:root} is usually high dimensional in modern machine learning models. Thus, it is not practical to apply global optimization methods due to their expensive computational cost. For single fractional optimization problems, the variable transformation and parametric approach have been proposed to overcome the algorithmic difficulties caused by the ratio involved. In \cite{Charnes-Cooper:NRLQ1962}, Charnes and Cooper first suggested a variable transformation by which a linear fractional problem is reduced to a linear program. In fact, with the help of that variable transformation, any convex-concave fractional minimization problem can be equivalently reduced to a convex minimization problem. Since problem \eqref{problem:root} is not a convex-concave fractional minimization problem, through the variable transformation it remains nonconvex and in general difficult to solve. Hence, the variable transformation approach is not suitable for dealing with problem \eqref{problem:root}. Another widely used method for fractional optimization is the parametric approach, which takes good advantage of the relationship between a fractional problem and its associated parametric problem \cite{Dinkelbach-Werner:MS1967, Jagannathan:1966Management_Science}. Many efficient algorithms have been developed based on the parametric approach, see, for example, \cite{Dinkelbach-Werner:MS1967, Ibaraki:1983Mathematical_Programming, Pang:1980Operations_Research, Pang:1981Mathematical_Programming, Pardalos-Phillips:1991Journal_of_Global_Optimization}.
When they are applied to problem \eqref{problem:root}, most of these algorithms require to solve in each iteration a parametric subproblem in the form of
\begin{equation}\label{117E5}
\min~\{ f(x)+h(x)-cg(x):x\in\Omega \},
\end{equation}
where $c\in\mathbb{R}$ is determined by the previous iteration.
However, it is possibly not efficient enough since solving in each iteration a subproblem \eqref{117E5} would be numerically expansive.

In this work, we propose new iterative numerical algorithms for solving problem \eqref{problem:root}. In each iteration of the proposed algorithms, we mainly make use of the proximity operator of $f$, the gradient of $h$ and the subgradient of $g$ at the current iterate. When the above first-order operations are easy to compute, our algorithms perform efficiently. Our contributions are summarized below. 	
\begin{itemize}
	\item By Fr{\'e}chet subdifferentials of $f$, $g$ and the gradient of $h$, we derive a first-order necessary optimality condition for problem \eqref{problem:root} and thus introduce the definition of its critical points.	
	\item Based on the first-order optimality condition aforementioned, we develop for problem \eqref{problem:root} three first-order numerical algorithms, namely, proximity-gradient-subgradient algorithm (PGSA), PGSA with monotone line search (PGSA\_ML) and PGSA with nonmonotone line search (PGSA\_NL). Under mild assumptions on problem \eqref{problem:root}, we prove that any accumulation point of the sequence generated by any of the proposed algorithms is a critical point of problem \eqref{problem:root}. In addition, we show global convergence of the entire sequence generated by PGSA or PGSA\_ML, by further assuming that $f$ is locally Lipschitz in its domain, $g$ is differentiable with a locally Lipschitz continuous gradient and the objective in problem \eqref{problem:root} satisfies the Kurdyka-{\L}ojasiewicz property. The convergence rate of PGSA and PGSA\_ML are also estimated according to the Kurdyka-{\L}ojasiewicz property.
	\item We identify SGEP associated with a pair of symmetric positive semidefinite matrices as a special case of problem \eqref{problem:root} and apply the proposed algorithms to SGEP. We obtain the convergence results of the proposed algorithms for SGEP, by validating  all the conditions needed in their general convergence theorems. In particular, we prove that the sequence generated by PGSA or PGSA\_ML converges R-linearly by establishing that the KL exponent is $\frac{1}{2}$ at any critical point of SGEP.
\end{itemize}

The remaining part of this paper is organized as follows. In Section \ref{section:Notation_and_preliminaries}, we introduce notation and some necessary preliminaries. Section \ref{section:First-order condition} is devoted to a study of first-order necessary optimality conditions for problem \eqref{problem:root}. In Section \ref{section:PGSA}, we propose the PGSA and give its convergence analysis. In Section \ref{section5:PGSA_L}, we develop PGSA with line search (PGSA\_L), including PGSA\_ML and PGSA\_NL, and study their convergence property. We specify in Section \ref{section:sparse_generalized_eigenvalue} the proposed algorithms and convergence results obtained in Sections \ref{section:PGSA} and \ref{section5:PGSA_L} to the sparse generalized eigenvalue problem. In Section \ref{section:Numerical experiments}, some numerical results for SGEP and $\ell_1/\ell_2$ sparse signal recovery problem are presented to demonstrate the efficiency of the proposed algorithms. Finally, we conclude this paper in the last section.

\section{Notation and preliminaries}\label{section:Notation_and_preliminaries}
We start by our preferred notation. We denote by $\mathbb{N}$ the set of nonnegative integers. For a positive integer $n$, we let $\mathbb{N}_n := \{1,2,\cdots,n\}$ and $0_n$ be the $n$-dimensional zero vector. For $x\in\mathbb{R}$, let $[x]_+:=\max\{0,x\}$. By $\mathbb{S}^n_+$ (resp., $\mathbb{S}^n_{++}$) we denote the set of all $n\times n$ symmetric positive semidefinite (resp., definite) matrices. Given $H\in\mathbb{S}^n_{++}$, the weighted inner product of $x,y\in\mathbb{R}^n$ is defined by $\langle  x,y \rangle_H := \langle  x,Hy \rangle$ and the weighted $\ell_2$-norm of $x\in\mathbb{R}^n$ is defined by $\|x\|_H:=\sqrt{\innerP{x}{x}_H}$. For an $n\times n$ matrix $A$, we denote by $\|A\|_2$ the matrix 2-norm of $A$. For $\Lambda\subseteq\mathbb{N}_n$, let $|\Lambda|$ be the number of elements in $\Lambda$. We denote by $x_{\Lambda} \in \mathbb{R}^{|\Lambda|}$ the sub-vector of $x$ whose indices are restricted to $\Lambda$. We also denote by $A_{\Lambda}$ the $|\Lambda|\times|\Lambda|$ sub-matrix formed from picking the rows and columns of $A$ indexed by $\Lambda$. For a function $\varphi:\mathbb{R}^n\to\overline{\mathbb{R}}$ and $t\in\mathbb{R}$, let $\lev (\varphi,t) := \{x\in\mathbb{R}^n:\varphi(x)\leq t \}$.

For $x\in\mathbb{R}^n$, let $\supp(x)$ be the support of $x$, that is, $\supp(x):=\{i\in\mathbb{N}_n:x_i\neq 0\}$. Given $\delta>0$, we let $B(x,\delta):=\{z\in\mathbb{R}^n:\|z-x\|_2<\delta\}$ and $U(x,\delta):=\{z\in\mathbb{R}^n:|z_i-x_i|<\delta,\forall i\in\mathbb{N}_n\}$. For any closed set $S\subseteq \mathbb{R}^n$, the distance from $x\in\mathbb{R}^n$ to $S$ is defined by $\dist(x,S):=\inf\{\|x-z\|_2:z\in S\}$. The indicator function on $S$ is defined by
\begin{equation*}
\iota_S(x):=\begin{cases}
0,&\text{if }x\in S,\\
+\infty,&\text{otherwise}.
\end{cases}
\end{equation*}

In the remaining part of this section, we present some preliminaries on the Fr\'echet subdifferential and limiting-subdifferential \cite{Boris:2006Variational_analysis,Rockafellar2004Variational} as well as the Kurdyka-{\L}ojasiewicz (KL) property \cite{Attouch-bolt-redont-soubeyran:2010}. These concepts play a central role in our theoretical and algorithmic developments.

%=================================  2.1  ===============================================
%=================================  2.1  ===============================================

\subsection{Fr\'echet subdifferential and limiting-subdifferential}

Let $\varphi:\mathbb{R}^n\to\overline{\mathbb{R}}$ be a proper function. The domain of $\varphi$ is defined by $\dom (\varphi):=\{x\in\mathbb{R}^n:\varphi(x)< +\infty\}$. The Fr\'echet subdifferential of $\varphi$ at $x\in\dom(\varphi)$, denoted by $\widehat{\partial}\varphi(x)$, is defined by
\begin{equation*}
\widehat{\partial} \varphi (x):=\left\{ y\in\mathbb{R}^n:\mathop{\lim\inf}\limits_{\substack{z\to x\\z\neq x}}\;
\frac{\varphi(z)-\varphi(x)-\innerP{y}{z-x}}{\|z-x\|_2}\geq 0
 \right\}.
\end{equation*}
The set $\widehat{\partial}\varphi(x)$ is convex and closed. If $x\notin\dom(\varphi)$, we let $\widehat{\partial}\varphi(x) = \emptyset$. We say $\varphi$ is Fr\'echet subdifferentiable at $x\in\mathbb{R}^n$ when $\widehat{\partial}\varphi(x)\neq\emptyset$. Apart from the Fr\'echet subdifferential, we also need the notion of limiting-subdifferentials. The limiting-subdifferential or simply the subdifferential for short, of $\varphi$ at $x\in\dom(\varphi)$ is defined by
\begin{equation*} %那个属于号倒过来的指令为\ni
{\partial}\varphi(x) := \{ y\in\mathbb{R}^n:\exists x^k\to x,~\varphi(x^k)\to\varphi(x),~ y^k\in\widehat{\partial}\varphi(x^k)\to y \}.
\end{equation*}
It is straightforward that $\widehat{\partial}\varphi(x)\subseteq {\partial}\varphi(x)$ for all $x\in\mathbb{R}^n$. Moreover, if $\varphi$ is convex, then $\widehat{\partial}\varphi(x)$ and $\partial\varphi(x)$ reduce to the classical subdifferential in convex analysis, i.e.,
\begin{equation*}
\widehat{\partial} \varphi(x) = \partial\varphi(x) = \{y\in\mathbb{R}^n:\varphi(z)-\varphi(x)-\innerP{y}{z-x}\geq0,\forall z\in\mathbb{R}^n \}.
\end{equation*}

We next recall some simple and useful calculus results on $\widehat{\partial}$ and ${\partial}$. For any $\alpha>0$ and $x\in\mathbb{R}^n$, $\widehat{\partial}(\alpha\varphi)(x) = \alpha\widehat{\partial}\varphi(x)$ and ${\partial}(\alpha\varphi)(x) = \alpha{\partial}\varphi(x)$. Let $\varphi_1,~\varphi_2:\mathbb{R}^n\to\overline{\mathbb{R}}$ be proper and lower semicontinuous and $x\in\dom(\varphi_1 + \varphi_2)$. Then, $\widehat{\partial}\varphi_1(x) + \widehat{\partial}\varphi_2(x)\subseteq\widehat{\partial}(\varphi_1+\varphi_2)(x)$. If $\varphi_2$ is differentiable at $x$, then $\widehat{\partial}\varphi_2(x) = \{\triangledown\varphi_2(x) \}$ and $\widehat{\partial}(\varphi_1 + \varphi_2)(x) = \widehat{\partial}\varphi_1(x)+\triangledown\varphi_2(x)$. Furthermore, if $\varphi_2$ is continuously differentiable at $x$, then ${\partial}\varphi_2(x) = \{\triangledown\varphi_2(x) \}$ and ${\partial}(\varphi_1 + \varphi_2)(x) = {\partial}\varphi_1(x) + \triangledown\varphi_2(x)$.

We next present some results of the Fr\'echet subdifferential for the quotient of two functions. To this end, we first recall the calmness condition.
%\newline
\begin{definition}[Calmness condition \cite{Rockafellar2004Variational}]
	The function $\varphi:\mathbb{R}^n\to\overline{\mathbb{R}}$ is said to satisfy the calmness condition at $x\in\dom(\varphi)$, if there exists $\kappa >0$ and a neighborhood $O$ of $x$, such that
	\begin{equation*}
	|\varphi(u)-\varphi(x)|\leq\kappa\|u-x\|_2
	\end{equation*}
	for all $u\in O$.
\end{definition}

The following proposition concerns the quotient rule of the Fr{\'e}chet subdifferential.

%\newline
\begin{proposition}[Subdifferential calculus for quotient of two functions]\label{ppsition:2.2-1}
	Let $f_1:\mathbb{R}^n\to\overline{\mathbb{R}}$ be proper and $f_2:\mathbb{R}^n \to \mathbb{R}$. Define $\rho:\mathbb{R}^n\to\overline{\mathbb{R}}$ at $x\in\mathbb{R}^n$ as
	\begin{equation}\label{formula:quotient of two functions}
	\rho(x) :=
	\begin{cases}
	\frac{f_1(x)}{f_2(x)}, &\text{if } x\in\mathrm{dom}(f_1)\text{ and } f_2(x) \neq 0,\\
	+\infty, & \text{else.}
	\end{cases}
	\end{equation}
Let $x\in\mathrm{dom}(\rho)$ with $a_1 := f_1(x)$ and $a_2 := f_2(x) >0$. If $f_1$ is continuous at $x$ relative to $\dom(f_1)$ and $f_2$ satisfies the calmness condition at $x$, then
\begin{equation*}
\widehat{\partial}\rho(x) = \frac{\widehat{\partial}(a_2 f_1- a_1 f_2)(x)}{a_2^2}.
\end{equation*}
Furthermore, if $f_2$ is differentiable at $x$, then
\begin{equation*}
\widehat{\partial}\rho(x) = \frac{a_2 \widehat{\partial} f_1(x)- a_1 \triangledown f_2(x)}{a_2^2}.
\end{equation*}
\end{proposition}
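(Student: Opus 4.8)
The plan is to prove the first assertion as the main step, since the second assertion follows immediately: if $f_2$ is differentiable at $x$, then $a_2 f_1 - a_1 f_2$ has the differentiable summand $-a_1 f_2$, and by the calculus rules recalled above, $\widehat{\partial}(a_2 f_1 - a_1 f_2)(x) = a_2\widehat{\partial}f_1(x) - a_1\triangledown f_2(x)$, which combined with the first formula gives the second. So the crux is to establish $\widehat{\partial}\rho(x) = \widehat{\partial}(a_2 f_1 - a_1 f_2)(x)/a_2^2$.

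The natural approach is to reduce the quotient to a composition with a smooth function and invoke a chain rule for Fr\'echet subdifferentials. Concretely, I would write $\rho = \psi \circ \Phi$ near $x$, where $\Phi(u) = (f_1(u), f_2(u))$ maps into $\mathbb{R}^2$ and $\psi(s,t) = s/t$ on the open half-plane $\{t > 0\}$. Since $a_2 = f_2(x) > 0$ and $f_2$ is continuous (it satisfies the calmness condition, hence is continuous at $x$), on a neighborhood $O$ of $x$ we have $f_2 > 0$, so $\rho(u) = f_1(u)/f_2(u)$ there and the composition is well-defined. The function $\psi$ is continuously differentiable near $(a_1, a_2)$ with $\nabla\psi(a_1,a_2) = (1/a_2, -a_1/a_2^2)$. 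The idea is then that $y \in \widehat{\partial}\rho(x)$ should correspond, via the chain rule, to $y \in \widehat{\partial}\big(\tfrac{1}{a_2}f_1 - \tfrac{a_1}{a_2^2}f_2\big)(x) = \tfrac{1}{a_2^2}\widehat{\partial}(a_2 f_1 - a_1 f_2)(x)$, using the positive-scalar rule $\widehat{\partial}(\alpha\varphi) = \alpha\widehat{\partial}\varphi$ and the fact that the summand $-\tfrac{a_1}{a_2^2}f_2$ is handled by calmness (see below). Alternatively, and perhaps more elementarily, I would work directly from the definition of $\widehat{\partial}$: $y \in \widehat{\partial}\rho(x)$ iff $\liminf_{z\to x}\big(\rho(z) - \rho(x) - \langle y, z-x\rangle\big)/\|z-x\|_2 \ge 0$, then substitute $\rho(z) = f_1(z)/f_2(z)$ and algebraically rearrange $\rho(z) - \rho(x) = \big(a_2 f_1(z) - a_1 f_2(z) - (f_2(z) - a_2)(f_1(z) - \rho(x)f_2(z))\big)/(a_2 f_2(z))$ — the point being to isolate the linear-order term $a_2 f_1(z) - a_1 f_2(z)$ divided by $a_2^2$ and show the remaining cross-terms are $o(\|z-x\|_2)$.

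The main obstacle — and the place where the hypotheses are really used — is controlling the denominator $f_2(z)$ and the cross-term in that rearrangement. Here calmness of $f_2$ at $x$ is essential: it gives $|f_2(z) - a_2| \le \kappa\|z - x\|_2$ on a neighborhood, so $f_2(z) \to a_2 > 0$ at a controlled rate and $1/(a_2 f_2(z)) \to 1/a_2^2$ with the error being $O(\|z-x\|_2)$; this is what lets one replace $a_2 f_2(z)$ by $a_2^2$ in the quotient without affecting the sign of the liminf. Simultaneously, the continuity of $f_1$ at $x$ relative to $\dom(f_1)$ is needed so that $f_1(z) - \rho(x)f_2(z) \to a_1 - \rho(x)a_2 = 0$, making the product $(f_2(z) - a_2)(f_1(z) - \rho(x)f_2(z))$ genuinely $o(\|z-x\|_2)$ — note $f_1$ need not be calm, only continuous, so this factor is merely $o(1)$ while the other factor is $O(\|z-x\|_2)$, and their product is $o(\|z-x\|_2)$, which suffices. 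One must be careful that the limit is taken over $z \to x$ with $z \ne x$ but without restriction to $\dom(f_1)$; for $z \notin \dom(\rho)$ we have $\rho(z) = +\infty$ and such $z$ only help the liminf, so they can be discarded, effectively reducing the limit to $z \in \dom(f_1)$ near $x$ where continuity of $f_1$ applies. Assembling these estimates, the difference quotient for $\rho$ and the difference quotient for $\tfrac{1}{a_2^2}(a_2 f_1 - a_1 f_2)$ differ by a term tending to $0$, so the two liminf-nonnegativity conditions are equivalent, giving the claimed set equality; the reverse inclusion is obtained symmetrically (or simply by running the same equivalence backwards, since every step is reversible).
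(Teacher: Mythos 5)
Your direct-from-the-definition argument is essentially the paper's own proof: the paper likewise writes the difference quotient of $\rho$ as the difference quotient of $(a_2 f_1 - a_1 f_2)/a_2^2$ plus a remainder $R(x,u) = (a_2 - f_2(u))\bigl(a_2 f_1(u) - a_1 f_2(u)\bigr)/\bigl(a_2^2 f_2(u)\|u-x\|_2\bigr)$, which vanishes exactly as you say — calmness of $f_2$ supplies the $O(\|u-x\|_2)$ factor, continuity of $f_1$ relative to $\dom(f_1)$ supplies the $o(1)$ factor, and points outside $\dom(\rho)$ are harmless for the liminf. The only blemish is that your displayed rearrangement of $\rho(z)-\rho(x)$ is not literally an identity (the cross term arises from replacing the denominator $a_2 f_2(z)$ by $a_2^2$ in the main term, not from subtracting it inside the numerator over $a_2 f_2(z)$), but the estimate you describe in words is the correct one; the paper also separately disposes of the trivial case where $x$ is an isolated point of $\dom(\rho)$, which your "vacuous liminf" observation covers implicitly.
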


The proof is given in the Appendix \ref{appendixA:proof_pposition2.2}.

%\begin{remark}\label{remark0211}
%From the above proposition, we have $\frac{\alpha_2 \widehat{\partial} f_1(x) + \widehat{\partial}(-\alpha_1 f_2)(x)}{\alpha_2^2}\subseteq \widehat{\partial} \rho(x)$ for $x\in\mathrm{dom}(\rho)$, when the conditions in Proposition \ref{ppsition:2.2-1} are satisfied. It should be noted that $\widehat{\partial}(-\alpha_1 f_2)(x)\neq -\alpha_1 \widehat{\partial} f_2(x)$ in general for $\alpha_1>0$.
%\end{remark}

%================================  2.2:KL property  =============================================
%================================  2.2:KL property  =============================================

\subsection{Kurdyka-{\L}ojasiewicz (KL) property}
\indent
\begin{definition}[KL property \cite{Attouch-bolt-redont-soubeyran:2010}]\label{Def:KL_property}
	A proper function $\varphi:\mathbb{R}^n\to\overline{\mathbb{R}}$ is said to satisfy the KL property at $\hat{x}\in\mathrm{dom}({\partial} \varphi)$ if there exist $\eta\in(0,+\infty]$, a neighborhood $O$ of $\hat{x}$ and a continuous concave function $\phi:[0,\eta) \to [0,+\infty]$, such that:
	\begin{enumerate}[\upshape(\romannumeral 1)]
		\item $\phi(0)=0$,
		\item $\phi$ is continuously differentiable on $(0,\eta)$ with $\phi'>0$,
		\item For any $x\in O \cap\{x\in\mathbb{R}^n:\varphi(\hat{x})<\varphi(x)<\varphi(\hat{x})+\eta \}$, there holds $\phi'(\varphi(x)-\varphi(\hat{x}))\mathrm{~dist}(0,{\partial}\varphi(x)) \geq 1$.
	\end{enumerate}
\end{definition}

A proper lower semicontinuous function $\varphi:\mathbb{R}^n\to\overline{\mathbb{R}}$ is called a KL function if $\varphi$ satisfies the KL property at all points in $\mathrm{dom}({\partial}\varphi)$. For connections between the KL property and the well-known error bound theory \cite{Luo-Pang:1994Mathematical_Programming, Luo-Tseng:1993Annals_of_Operations_Research, Pang:1997Mathematical_Programming}, we refer the interested readers to \cite{Bolte-Nguyen-Peypouquet:2017Mathematical_Programming, Li-Pong:2018Foundations_of_computational_mathematics}.
The notion of the KL property plays a crucial rule in analyzing the global sequential convergence. A framework for proving global sequential convergence using the KL property is provided in \cite{Attouch-Bolte:MP:2013}. We review this result in the next proposition.
%\newline
\begin{proposition}\label{3.3ppsition}
	Let $\varphi:\mathbb{R}^n\to\overline{\mathbb{R}}$ be a proper lower semicontinuous function. Consider a sequence satisfying the following three conditions:
	\begin{enumerate}[\upshape(\romannumeral 1)]
		\item (Sufficient decrease condition.) There exists $a>0$ such that
		\begin{equation*}
		\varphi(x^{k+1}) + a \|x^{k+1} - x^k\|^2_2 \leq \varphi(x^k)
		\end{equation*}
		holds for any $k\in\mathbb{N}$;\label{3.3item1-1}
		\item (Relative error condition.) There exist $b>0$ and $\omega^{k+1} \in {\partial} \varphi(x^{k+1})$ such that
		\begin{equation*}
		\|\omega^{k+1}\|_2 \leq b\|x^{k+1} - x^k\|_2
		\end{equation*}
		holds for any $k\in\mathbb{N}$;\label{3.3item1-2}
		\item (Continuity condition.) There exist a subsequence $\{x^{k_j}:j\in\mathbb{N}\}$ and $x^{\star}$ such that
		\begin{equation*}
		x^{k_j} \to x^\star \text{ and } \varphi(x^{k_j}) \to \varphi(x^\star)\text{, as }j\to \infty.
		\end{equation*}\label{3.3item1-3}
	\end{enumerate}	
	If $\varphi$ satisfies the KL property at $x^{\star}$, then $\sum_{k=1}^{\infty}\|x^k-x^{k-1}\|_2<+\infty$, $\lim\limits_{k\to\infty} x^k=x^{\star}$ and $0\in{\partial}\varphi(x^{\star})$.
	%Let $\varphi:\mathbb{R}^n\to\overline{\mathbb{R}}$ be proper lower semicontinuous. If the sequence $\{x^k\in\mathrm{dom}(\varphi):k\in\mathbb{N}\}$ has an accumulation point $x^\star$, such that $\varphi$ satisfies the KL property at $x^\star$, then $\sum_{k=1}^{\infty}\|x^k-x^{k-1}\|_2<+\infty$ and $\lim\limits_{k\to\infty}x^k=x^\star$ with $0\in{\partial}\varphi(x^\star)$, provided that the following conditions hold:
\end{proposition}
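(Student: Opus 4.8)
The plan is to follow the by-now-standard abstract convergence argument of Attouch--Bolte--Svaiter \cite{Attouch-Bolte:MP:2013}. I would first record the elementary consequences of (\ref{3.3item1-1})--(\ref{3.3item1-3}). Summing the sufficient decrease inequality yields $\sum_{k\in\mathbb{N}}\|x^{k+1}-x^k\|_2^2<+\infty$, so $\|x^{k+1}-x^k\|_2\to0$; and since $\{\varphi(x^k)\}$ is nonincreasing by (\ref{3.3item1-1}) while it admits a subsequence converging to $\varphi(x^\star)$ by (\ref{3.3item1-3}), the whole sequence satisfies $\varphi(x^k)\downarrow\varphi(x^\star)$ with $\varphi(x^k)\ge\varphi(x^\star)$ for all $k$. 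If $\varphi(x^{k_0})=\varphi(x^\star)$ for some $k_0$, then (\ref{3.3item1-1}) forces $x^k=x^{k_0}$ for all $k\ge k_0$, so the sequence is eventually constant and, using (\ref{3.3item1-2}) together with (\ref{3.3item1-3}), the conclusion holds trivially; hence I may assume $\varphi(x^k)>\varphi(x^\star)$ for every $k$.

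Next I invoke the KL property of $\varphi$ at $x^\star$: fix the associated $\eta$, neighborhood $O$, and concave $\phi$ from Definition \ref{Def:KL_property}, and pick $\rho>0$ with $B(x^\star,\rho)\subseteq O$. The core of the argument is a one-step estimate valid at any index $k$ for which $x^k\in O$ and $\varphi(x^\star)<\varphi(x^k)<\varphi(x^\star)+\eta$. For such $k$, concavity of $\phi$ gives
\[\phi\big(\varphi(x^k)-\varphi(x^\star)\big)-\phi\big(\varphi(x^{k+1})-\varphi(x^\star)\big)\ \ge\ \phi'\big(\varphi(x^k)-\varphi(x^\star)\big)\,\big(\varphi(x^k)-\varphi(x^{k+1})\big);\]
combining the KL inequality $\phi'(\varphi(x^k)-\varphi(x^\star))\ge 1/\mathrm{dist}(0,\partial\varphi(x^k))$ with (\ref{3.3item1-2}) (which gives $\mathrm{dist}(0,\partial\varphi(x^k))\le\|\omega^k\|_2\le b\|x^k-x^{k-1}\|_2$, and incidentally rules out $\|x^k-x^{k-1}\|_2=0$ in this regime) and the sufficient decrease $\varphi(x^k)-\varphi(x^{k+1})\ge a\|x^{k+1}-x^k\|_2^2$, then using $\sqrt{uv}\le\tfrac12(u+v)$, I obtain
\[2\|x^{k+1}-x^k\|_2\ \le\ \|x^k-x^{k-1}\|_2+\tfrac{b}{a}\Big(\phi\big(\varphi(x^k)-\varphi(x^\star)\big)-\phi\big(\varphi(x^{k+1})-\varphi(x^\star)\big)\Big).\]
Summing this telescoping bound over $k=\ell,\dots,N$ and rearranging gives $\sum_{k=\ell}^{N}\|x^{k+1}-x^k\|_2\le\|x^\ell-x^{\ell-1}\|_2+\tfrac{b}{a}\,\phi\big(\varphi(x^\ell)-\varphi(x^\star)\big)$.

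The remaining, and in my view the most delicate, step is a bootstrapping argument ensuring the iterates never leave $B(x^\star,\rho)$, so that the one-step estimate genuinely applies at every index beyond some $\ell$. Using $\|x^{k_j}-x^\star\|_2\to0$ along the subsequence, $\|x^{k+1}-x^k\|_2\to0$, and $\phi\big(\varphi(x^k)-\varphi(x^\star)\big)\to0$ (continuity of $\phi$ at $0$ and $\varphi(x^k)\downarrow\varphi(x^\star)$), I would choose $\ell=k_j$ for $j$ large enough that $\varphi(x^\ell)<\varphi(x^\star)+\eta$ and $\|x^\ell-x^\star\|_2+\|x^\ell-x^{\ell-1}\|_2+\tfrac{b}{a}\phi\big(\varphi(x^\ell)-\varphi(x^\star)\big)<\rho$. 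An induction on $k\ge\ell$ then shows $x^k\in B(x^\star,\rho)$: if $x^\ell,\dots,x^k\in B(x^\star,\rho)$, the one-step estimate holds at $\ell,\dots,k$, the telescoped sum bounds $\sum_{j=\ell}^{k}\|x^{j+1}-x^j\|_2$ by $\|x^\ell-x^{\ell-1}\|_2+\tfrac{b}{a}\phi(\varphi(x^\ell)-\varphi(x^\star))$, and the triangle inequality puts $x^{k+1}$ within distance $\rho$ of $x^\star$. This simultaneously yields $\sum_{k}\|x^k-x^{k-1}\|_2<+\infty$, whence $\{x^k\}$ is Cauchy and converges; the limit is $x^\star$ because of the subsequence. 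Finally, from (\ref{3.3item1-2}), $\omega^{k+1}\in\partial\varphi(x^{k+1})$ with $\|\omega^{k+1}\|_2\le b\|x^{k+1}-x^k\|_2\to0$ while $x^{k+1}\to x^\star$ and $\varphi(x^{k+1})\to\varphi(x^\star)$, so the definition of the limiting subdifferential gives $0\in\partial\varphi(x^\star)$. I expect the careful interleaving of the ``stay in the ball'' induction with the summability estimate to be the main obstacle; the rest is a routine chain of inequalities.
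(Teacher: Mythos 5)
Your proof is correct and follows essentially the same route as the source the paper relies on: Proposition \ref{3.3ppsition} is stated in the paper as a review of the abstract convergence framework of Attouch, Bolte and Svaiter \cite{Attouch-Bolte:MP:2013} and is not reproved there, and your argument (one-step KL estimate via concavity of $\phi$ plus AM--GM, telescoping, and the stay-in-the-ball induction, followed by closedness of the limiting subdifferential) is precisely the standard proof from that reference. No gaps; the only cosmetic point is that the final step uses the closedness of the graph of $\partial\varphi$ under $\varphi$-attentive convergence, which follows from the definition by a routine diagonal argument.
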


\section{First-order necessary optimality condition}\label{section:First-order condition}
In this section, we establish a first-order necessary optimality condition for local minimizers of problem \eqref{problem:root}. For convenience, we define $F:\mathbb{R}^n\to\overline{\mathbb{R}}$ at $x\in\mathbb{R}^n$ as
\begin{equation}\label{eq:1119biaohaoF}
F(x) :=
\begin{cases}
\frac{f(x)+h(x)}{g(x)}, & \text{if }x\in\Omega\cap\mathrm{dom}(f),\\
+\infty, & \text{else.}
\end{cases}
\end{equation}
Then, problem \eqref{problem:root} can be written as
\begin{equation*}
\min \{F(x):x\in\mathbb{R}^n \}.
\end{equation*}

From the generalized \textit{Fermat's rule} \cite[Theorem 10.1]{Rockafellar2004Variational}, we know that if $x^\star$ is a local minimizer of problem \eqref{problem:root} then $0\in\widehat{\partial} F(x^\star)$.
Since $g$ is not necessarily differentiable, in general $\widehat{\partial} F(x^{\star})$ can not be represented by Fr{\'e}chet subdifferentials of $f$ and $g$ and the gradient of $h$. Therefore, we have to derive the first-order optimality condition on a different manner.

Our idea is to take advantage of the parametric programing. With the help of the parametric problem, we obtain the first-order necessary optimality condition of local minimizers of $F$. To this end, we first characterize local and global minimizers of problem \eqref{problem:root} by those of its corresponding parametric problem. The result is presented in the next proposition and the proof is given in Appendix \ref{appendixA2:proof_pposition3.1}.
%\newline
\begin{proposition}\label{2.2ppsition}
	Let $x^{\star} \in \mathrm{dom}(F)$ and $c_{\star} = F(x^\star)$. Then, $x^{\star}$ is a local (resp., global) minimizer of problem \eqref{problem:root} if and only if $x^{\star}$ is a local (resp., global) minimizer of the following problem:
	\begin{equation}\label{problem:root2}
	\min\;\{f(x)+h(x)-c_\star g(x):x\in \Omega \}.
	\end{equation}
\end{proposition}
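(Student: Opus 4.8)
The plan is to exploit the sign structure of the problem — namely that $g > 0$ on $\mathrm{dom}(f)\cap\Omega$ and $f+h\geq 0$ everywhere — to convert statements about the ratio $F(x) = \frac{f(x)+h(x)}{g(x)}$ into statements about the difference $\psi_{c_\star}(x) := f(x)+h(x)-c_\star g(x)$, locally around $x^\star$. The key elementary observation is this: for $x$ near $x^\star$ with $x\in\Omega\cap\mathrm{dom}(f)$ (such $x$ form a neighborhood of $x^\star$ relative to $\mathrm{dom}(F)$, since $g(x^\star)>0$ and $g$ is continuous, so $g>0$ on a whole neighborhood), we have $g(x) > 0$, and hence the equivalence
\begin{equation*}
F(x) \geq c_\star \iff f(x)+h(x)-c_\star g(x) \geq 0, \qquad F(x) \leq c_\star \iff f(x)+h(x)-c_\star g(x) \leq 0,
\end{equation*}
with the analogous strict versions. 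Moreover $\psi_{c_\star}(x^\star) = g(x^\star)\big(F(x^\star) - c_\star\big) = 0$ by the definition of $c_\star$.

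First I would prove the global equivalence, which is cleaner. Suppose $x^\star$ is a global minimizer of \eqref{problem:root}. For any $x\in\Omega$: if $x\notin\mathrm{dom}(f)$ then $f(x)=+\infty$ and $\psi_{c_\star}(x)=+\infty \geq 0 = \psi_{c_\star}(x^\star)$; if $x\in\mathrm{dom}(f)$ then $g(x)>0$ and $F(x)\geq c_\star$, so $\psi_{c_\star}(x) = g(x)(F(x)-c_\star)\geq 0 = \psi_{c_\star}(x^\star)$. Thus $x^\star$ minimizes $\psi_{c_\star}$ over $\Omega$. Conversely, if $x^\star$ is a global minimizer of \eqref{problem:root2}, then for every $x\in\Omega\cap\mathrm{dom}(f)$ we have $\psi_{c_\star}(x)\geq\psi_{c_\star}(x^\star)=0$, hence $g(x)(F(x)-c_\star)\geq 0$, and dividing by $g(x)>0$ gives $F(x)\geq c_\star = F(x^\star)$; for $x\in\Omega\setminus\mathrm{dom}(f)$, $F(x)=+\infty$ trivially. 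So $x^\star$ is a global minimizer of \eqref{problem:root}.

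For the local case I would run the same argument but restricted to a neighborhood, being careful about which neighborhood. If $x^\star$ is a local minimizer of $F$, pick a radius $\delta_1>0$ with $F(x)\geq F(x^\star)$ for all $x\in B(x^\star,\delta_1)$, and (using continuity of $g$) a radius $\delta_2>0$ with $g>0$ on $B(x^\star,\delta_2)$; then on $B(x^\star,\min\{\delta_1,\delta_2\})\cap\Omega$ the computation above shows $\psi_{c_\star}(x)\geq 0 = \psi_{c_\star}(x^\star)$ (splitting again into the cases $x\in\mathrm{dom}(f)$ and $x\notin\mathrm{dom}(f)$), so $x^\star$ is a local minimizer of \eqref{problem:root2}. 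Conversely, if $x^\star$ locally minimizes $\psi_{c_\star}$ on some $B(x^\star,\delta)\cap\Omega$, shrink $\delta$ so that also $g>0$ there; then for $x$ in this ball with $x\in\Omega\cap\mathrm{dom}(f)$, $\psi_{c_\star}(x)\geq 0$ gives $F(x)\geq c_\star$ after dividing by $g(x)>0$, and for $x\in\Omega\setminus\mathrm{dom}(f)$ this is automatic, so $x^\star$ locally minimizes $F$.

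The only genuinely delicate point — and the step I would watch most carefully — is bookkeeping around the domain $\mathrm{dom}(f)$ and the open set $\Omega$: one must ensure that "a neighborhood of $x^\star$" on which comparisons are made is correctly intersected with $\Omega$ (so that $F$ and $\psi_{c_\star}$ are even finite-valued in the relevant sense) and that the points $x\in\Omega\setminus\mathrm{dom}(f)$ are handled separately via $F(x)=\psi_{c_\star}(x)=+\infty$. Since $g$ is continuous and $g(x^\star)>0$, positivity of $g$ propagates to a full neighborhood, which is what makes the division by $g(x)$ legitimate throughout; no convexity of $g$, smoothness of $h$, or subdifferential calculus is needed here — it is purely the sign hypotheses that drive the proof.
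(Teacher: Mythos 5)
Your proof is correct and follows essentially the same route as the paper's (Appendix B): use $\psi_{c_\star}(x^\star)=0$ and the positivity of $g$ on $\Omega\cap\mathrm{dom}(f)$ to translate $F(x)\geq c_\star$ into $f(x)+h(x)-c_\star g(x)\geq 0$ on the relevant neighborhood, in both directions. Your explicit handling of points in $\Omega\setminus\mathrm{dom}(f)$ (where both objectives are $+\infty$) is a small point of extra care that the paper leaves implicit, but it does not change the argument.
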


We next present an important inequality, which plays a crucial role in deducing the first-order optimality condition.
%\newline
\begin{lemma}\label{lemma:3.1-1}
	Let $x^\star \in \mathrm{dom}(F)$ be a local minimizer of problem \eqref{problem:root2} with $c_{\star} = F(x^\star)$. Then, there exists $\delta>0$ such that for any $x \in B(x^{\star},\delta)\cap\dom(F)$ and any $y^\star \in {\partial} g(x^\star)$, there holds
	\begin{equation*}
	f(x^\star) \leq f(x)+ \langle \triangledown h(x^\star)-c_\star y^\star, x-x^\star\rangle  + \frac{L}{2}\|x-x^\star\|_2^2.
	\end{equation*}
\end{lemma}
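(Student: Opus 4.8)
The plan is to combine the local optimality of $x^\star$ for the parametric problem \eqref{problem:root2} with two elementary one-sided estimates: the descent lemma for $h$ and the subgradient inequality for the convex function $g$. No genuinely hard step is involved; the whole content is a rearrangement of the defining inequality for a local minimizer.

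First I would unpack the hypothesis. Since $x^\star$ is a local minimizer of \eqref{problem:root2}, there exists $\delta>0$ such that $B(x^\star,\delta)$ lies inside the relevant neighborhood, so that
\[
f(x^\star) + h(x^\star) - c_\star g(x^\star) \;\le\; f(x) + h(x) - c_\star g(x)
\qquad\text{for all } x\in B(x^\star,\delta)\cap\Omega .
\]
Because $\dom(F)=\dom(f)\cap\Omega\subseteq\Omega$, this in particular holds for every $x\in B(x^\star,\delta)\cap\dom(F)$. Rearranging gives
\[
f(x^\star)\;\le\; f(x) + \bigl(h(x)-h(x^\star)\bigr) - c_\star\bigl(g(x)-g(x^\star)\bigr).
\]

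Next I would bound the two bracketed terms. Since $\triangledown h$ is $L$-Lipschitz, the descent lemma yields $h(x)-h(x^\star)\le\langle\triangledown h(x^\star),x-x^\star\rangle+\frac{L}{2}\|x-x^\star\|_2^2$. Since $g$ is convex, for any $y^\star\in{\partial} g(x^\star)$ the subgradient inequality yields $g(x)-g(x^\star)\ge\langle y^\star,x-x^\star\rangle$. The only point requiring a word is the sign of $c_\star$: as $x^\star\in\dom(F)=\dom(f)\cap\Omega$, the standing assumptions give $g(x^\star)>0$ and $f(x^\star)+h(x^\star)\ge0$, hence $c_\star=F(x^\star)\ge0$; therefore $-c_\star\bigl(g(x)-g(x^\star)\bigr)\le-c_\star\langle y^\star,x-x^\star\rangle$. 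Substituting both estimates into the displayed inequality produces exactly
\[
f(x^\star)\;\le\; f(x) + \langle\triangledown h(x^\star)-c_\star y^\star,\,x-x^\star\rangle + \frac{L}{2}\|x-x^\star\|_2^2 ,
\]
which is the claim.

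The main things to be careful about—rather than any real obstacle—are that $\delta$ must be chosen independently of both $x$ and $y^\star$ (it is, since it comes only from the neighborhood in the definition of local minimizer), and that the nonnegativity of $c_\star$, which is needed to keep the inequality direction when replacing $g(x)-g(x^\star)$ by the subgradient term, is supplied by the hypotheses that $f+h\ge0$ on $\mathbb{R}^n$ and $g>0$ on $\dom(f)\cap\Omega$.
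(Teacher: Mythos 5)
Your proposal is correct and follows essentially the same route as the paper: the local-optimality inequality for the parametric problem \eqref{problem:root2}, the descent lemma for $h$, and the subgradient inequality for $g$ weighted by $c_\star \geq 0$, summed together. The only difference is that you spell out why $c_\star \geq 0$ (from $f+h\geq 0$ and $g>0$ on $\dom(f)\cap\Omega$), which the paper invokes without comment.
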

\begin{proof}
	Since $x^\star$ is a local minimizer of problem \eqref{problem:root2}, there exists $\delta>0$ such that for any $x \in B(x^{\star},\delta)\cap\dom(F)$, there holds
	\begin{equation}\label{formula:lemma3.0-1}
	f(x^\star) + h(x^\star) - c_\star g(x^\star) \leq f(x)+h(x)-c_\star g(x).
	\end{equation}
	Due to the Lipschitz continuity of $\triangledown h$, convexity of $g$ and $c_\star \geq 0$, it follows that, for any $x \in \mathbb{R}^n$ and $y^\star \in {\partial} g(x^\star)$,
	$h(x) \leq h(x^\star) + \langle \triangledown h(x^\star), x-x^\star \rangle + \frac{L}{2} \|x-x^\star\|^2_2$ and $c_\star g(x^\star) + \langle c_\star y^\star, x-x^\star \rangle \leq c_\star g(x).$
	By summing \eqref{formula:lemma3.0-1} and those two inequalities, we get this lemma.
\end{proof}

Now, we are ready to present the first-order necessary optimality condition for problem \eqref{problem:root}.
%\newline
\begin{theorem}\label{theorem:3.0-1}
	Let $x^\star \in \mathrm{dom}(F)$ be a local minimizer of problem \eqref{problem:root} and $c_{\star} = F(x^\star)$, then $c_\star {\partial} g(x^\star) \subseteq \widehat{\partial} f(x^\star) + \triangledown h(x^\star)$.
\end{theorem}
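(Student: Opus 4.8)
The plan is to combine Lemma \ref{lemma:3.1-1} with the definition of the Fr\'echet subdifferential to show that every element of $c_\star \partial g(x^\star)$ lies in $\widehat{\partial}f(x^\star) + \triangledown h(x^\star)$. First I would fix an arbitrary $y^\star \in \partial g(x^\star)$ and set $v := c_\star y^\star - \triangledown h(x^\star)$; the goal is to prove $v \in \widehat{\partial} f(x^\star)$, i.e.\ that
\begin{equation*}
\liminf_{\substack{x\to x^\star\\ x\neq x^\star}} \frac{f(x) - f(x^\star) - \langle v, x - x^\star\rangle}{\|x - x^\star\|_2} \geq 0.
\end{equation*}
By Proposition \ref{2.2ppsition}, $x^\star$ is also a local minimizer of the parametric problem \eqref{problem:root2}, so Lemma \ref{lemma:3.1-1} applies and gives a $\delta > 0$ such that for all $x \in B(x^\star,\delta)\cap\dom(F)$,
\begin{equation*}
f(x^\star) \leq f(x) + \langle \triangledown h(x^\star) - c_\star y^\star, x - x^\star\rangle + \frac{L}{2}\|x - x^\star\|_2^2 = f(x) - \langle v, x - x^\star\rangle + \frac{L}{2}\|x - x^\star\|_2^2.
\end{equation*}
Rearranging, $f(x) - f(x^\star) - \langle v, x - x^\star\rangle \geq -\frac{L}{2}\|x - x^\star\|_2^2$ for all such $x$, hence the difference quotient above is bounded below by $-\frac{L}{2}\|x - x^\star\|_2$, which tends to $0$ as $x \to x^\star$; the $\liminf$ is therefore $\geq 0$, establishing $v \in \widehat{\partial}f(x^\star)$.

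The one subtlety is the domain restriction: Lemma \ref{lemma:3.1-1} only controls $x \in B(x^\star,\delta)\cap\dom(F)$, whereas the Fr\'echet subdifferential inequality is a condition as $z \to x^\star$ over all $z \neq x^\star$. I would handle this by noting that $\dom(F) = \Omega \cap \dom(f)$, so for $x \notin \dom(F)$ near $x^\star$ either $f(x) = +\infty$ (if $x \notin \dom(f)$), in which case the numerator of the quotient is $+\infty$ and the quotient is trivially nonnegative, or $x \in \dom(f)\setminus\Omega$, i.e.\ $g(x) = 0$. For the latter case, since $g$ is convex hence continuous and $g(x^\star) > 0$, the set $\{g = 0\}$ is bounded away from $x^\star$, so after possibly shrinking $\delta$ there are no such points in $B(x^\star,\delta)$. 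Thus on a small enough ball the only points where the quotient inequality might be in question are already covered, and the $\liminf$ is unaffected.

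Once $v = c_\star y^\star - \triangledown h(x^\star) \in \widehat{\partial}f(x^\star)$ is established for the fixed $y^\star$, adding $\triangledown h(x^\star)$ to both sides yields $c_\star y^\star \in \widehat{\partial}f(x^\star) + \triangledown h(x^\star)$; since $y^\star \in \partial g(x^\star)$ was arbitrary, this gives $c_\star \partial g(x^\star) \subseteq \widehat{\partial}f(x^\star) + \triangledown h(x^\star)$, as claimed. I expect the main (though mild) obstacle to be the bookkeeping around $\dom(F)$ versus $\dom(f)$ described above; everything else is a direct unwinding of the definitions once Lemma \ref{lemma:3.1-1} is in hand. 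A secondary point worth a sentence is that $c_\star = F(x^\star) \geq 0$ because $f + h \geq 0$ and $g > 0$ on $\dom(f)\cap\Omega$, which is what makes the term $c_\star\langle y^\star, x - x^\star\rangle$ enter with the correct sign in Lemma \ref{lemma:3.1-1}; we may simply invoke that lemma as a black box here.
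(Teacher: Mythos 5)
Your proposal is correct and follows essentially the same route as the paper: both rest on Proposition \ref{2.2ppsition} and Lemma \ref{lemma:3.1-1}, the only difference being that the paper finishes by applying Fermat's rule to the auxiliary problem $\min\{f(x)+\langle\triangledown h(x^\star)-c_\star y^\star,x-x^\star\rangle+\frac{L}{2}\|x-x^\star\|_2^2:x\in\Omega\}$ on the open set $\Omega$, whereas you unwind the definition of $\widehat{\partial}f(x^\star)$ directly via the $\liminf$ quotient. Your explicit treatment of the points outside $\dom(F)$ (where $f=+\infty$ or where $g$ vanishes, the latter excluded near $x^\star$ by continuity of $g$ and $g(x^\star)>0$) is the same openness-of-$\Omega$ observation the paper makes, just spelled out.
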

\begin{proof}
	From Proposition \ref{2.2ppsition}, $x^\star$ is a local minimizer of problem \eqref{problem:root2}. By Lemma \ref{lemma:3.1-1}, we have that $x^\star$ is a local minimizer of the following problem, for all $y^\star \in {\partial} g(x^\star)$,
	\begin{equation*}
	\min\;\left\{f(x) + \langle \triangledown h(x^\star)-c_\star y^\star, x-x^\star \rangle + \frac{L}{2} \|x-x^\star\|^2_2:x\in\Omega \right\}.
	\end{equation*}
	Because $g$ is continuous on $\mathbb{R}^n$, $\Omega$ is an open subset of $\mathbb{R}^n$. Thus, $x^{\star}$ is an interior point of $\Omega$. Therefore, $0 \in \widehat{\partial} f(x^\star) + \triangledown h(x^\star) - c_\star y^\star$ for all $y^\star \in {\partial} g(x^\star)$. This implies that $c_\star {\partial} g(x^\star) \subseteq \widehat{\partial} f(x^\star) + \triangledown h(x^\star)$. We complete the proof.
\end{proof}

Inspired by the above theorem, we define a critical point of $F$ as follows.
%\newline
\begin{definition}[Critical point of $F$]\label{Def:critical_point}
	Let $x^\star\in\mathrm{dom}(F)$ and $c_{\star} = F(x^\star)$. We say that $x^\star$ is a critical point of $F$ if
	\begin{equation*}
	0 \in \widehat{\partial} f(x^\star) + \triangledown h(x^\star) - c_\star {\partial} g(x^\star).
	\end{equation*}
\end{definition}

We remark that when $g$ is differentiable, by Proposition \ref{ppsition:2.2-1} we have for $x\in\dom(F)$,
\begin{align*}
\widehat{\partial} F(x) &= \frac{g(x)(\widehat{\partial} f(x) + \triangledown h(x)) - (f(x)+h(x))\triangledown g(x)}{g^2(x)}\\
&= \frac{1}{g(x)}(\widehat{\partial}f(x)+\triangledown h(x)-F(x) \triangledown g(x)).
\end{align*}
In this case, the statement that $x^\star$ is a critical point of $F$ (Definition \ref{Def:critical_point}) coincides with that $0\in\widehat{\partial} F(x^\star)$.

By Theorem \ref{theorem:3.0-1}, if $x^\star$ is a local minimizer of $F$, then $x^\star$ is a critical point of $F$. In the remaining part of this paper, we dedicate to developing iterative numerical algorithms to find critical points of $F$.

\section{The proximity-gradient-subgradient algorithm (PGSA) for solving problem \eqref{problem:root}}\label{section:PGSA}
This section is devoted to designing numerical algorithms for solving problem \eqref{problem:root}. We first propose an iterative scheme for solving problem \eqref{problem:root}, according to the first-order optimality condition. Then, we establish the convergence of objective function values and the subsequential convergence under a mild assumption. Finally, by making additional assumptions on $f$, $g$ and assuming the level boundedness and KL property of the objective, we prove the convergence of the whole sequence generated by the proposed algorithm.

From Theorem \ref{theorem:3.0-1}, a local minimizer of problem \eqref{problem:root} must be a critical point of $F$. Thus, our task becomes developing an algorithm with accumulation point being a critical point of $F$. To this end, we introduce the notion of proximity operators. For a proper and lower semicontinuous function $\varphi:\mathbb{R}^n \to \overline{\mathbb{R}}$, the proximity operator of $\varphi$ at $x\in\mathbb{R}^n$, denoted by $\text{prox}_\varphi(x)$, is defined by
\begin{equation*}
\text{prox}_\varphi(x) := \arg \min\; \{\varphi(y) + \frac{1}{2}\|y-x\|^2_2:y\in\mathbb{R}^n \}.
\end{equation*}
The operator $\text{prox}_\varphi$ is single-valued when $\varphi$ is convex and may be set-valued as $\varphi$ is nonconvex. With the help of the proximity operator, we derive a sufficient condition for a critical point of $F$ in the following proposition.
%\newline
\begin{proposition}\label{3.0ppsition}
	If $x^\star\in\mathrm{dom}(F)$ satisfies
	\begin{equation}\label{formula:3.1-2x_star2}
	x^{\star} \in \mathrm{prox}_{\alpha f} (x^\star - \alpha \triangledown h(x^\star) + \alpha c_\star y^\star)
	\end{equation}
	for some $\alpha >0$, $y^\star\in{\partial} g(x^\star)$ and $c_\star = F(x^\star)$, then $x^\star$ is a critical point of $F$.
\end{proposition}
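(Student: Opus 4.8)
The plan is to unravel the definition of the proximity operator in \eqref{formula:3.1-2x_star2} via Fermat's rule and match the resulting inclusion with Definition \ref{Def:critical_point}. First I would recall that, by the generalized Fermat's rule \cite[Theorem 10.1]{Rockafellar2004Variational}, the membership $x^\star \in \mathrm{prox}_{\alpha f}(z)$ with $z := x^\star - \alpha\triangledown h(x^\star) + \alpha c_\star y^\star$ means that $x^\star$ is a minimizer of the function $y \mapsto \alpha f(y) + \tfrac{1}{2}\|y - z\|_2^2$, hence $0 \in \widehat{\partial}\bigl(\alpha f(\cdot) + \tfrac{1}{2}\|\cdot - z\|_2^2\bigr)(x^\star)$.

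Next I would apply the calculus rules for the Fr\'echet subdifferential recalled in Section \ref{section:Notation_and_preliminaries}: since $y\mapsto \tfrac12\|y-z\|_2^2$ is differentiable with gradient $x^\star - z$ at $x^\star$, we get $\widehat{\partial}\bigl(\alpha f(\cdot) + \tfrac12\|\cdot - z\|_2^2\bigr)(x^\star) = \widehat{\partial}(\alpha f)(x^\star) + (x^\star - z) = \alpha\widehat{\partial} f(x^\star) + (x^\star - z)$, using also $\widehat{\partial}(\alpha f)(x^\star) = \alpha\widehat{\partial} f(x^\star)$. Therefore $0 \in \alpha\widehat{\partial} f(x^\star) + x^\star - z$, i.e. $z - x^\star \in \alpha\widehat{\partial} f(x^\star)$. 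Substituting $z - x^\star = -\alpha\triangledown h(x^\star) + \alpha c_\star y^\star$ and dividing by $\alpha > 0$ yields $c_\star y^\star - \triangledown h(x^\star) \in \widehat{\partial} f(x^\star)$, that is, $0 \in \widehat{\partial} f(x^\star) + \triangledown h(x^\star) - c_\star y^\star$. Since $y^\star \in {\partial} g(x^\star)$, this gives $0 \in \widehat{\partial} f(x^\star) + \triangledown h(x^\star) - c_\star {\partial} g(x^\star)$, which is exactly the criticality condition in Definition \ref{Def:critical_point} (recalling $c_\star = F(x^\star)$), completing the proof.

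I do not anticipate a genuine obstacle here; the argument is a direct application of Fermat's rule plus the sum rule for the Fr\'echet subdifferential when one summand is smooth. The only points requiring a little care are purely formal: one should note that $x^\star \in \mathrm{dom}(F) \subseteq \dom(f)$ so that $f(x^\star)$ is finite and the subdifferential calculus applies, and one should observe that the sum rule used is the \emph{exact} equality $\widehat{\partial}(\varphi_1 + \varphi_2)(x) = \widehat{\partial}\varphi_1(x) + \triangledown\varphi_2(x)$ valid when $\varphi_2$ is differentiable at $x$, not merely the inclusion that holds in general. No appeal to the KL property, level boundedness, or the line-search machinery is needed for this proposition.
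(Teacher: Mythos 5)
Your proposal is correct and follows essentially the same route as the paper: the paper's (much terser) proof likewise applies the generalized Fermat's rule to the minimization defining $\mathrm{prox}_{\alpha f}$ to obtain $0\in\alpha\widehat{\partial} f(x^\star)+\alpha\triangledown h(x^\star)-\alpha c_\star y^\star$ and then invokes Definition \ref{Def:critical_point}. Your version simply spells out the sum rule for the smooth quadratic term and the division by $\alpha$, which the paper leaves implicit.
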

\begin{proof}
	By the proximity operator and the generalized \textit{Fermat's rule}, \eqref{3.0ppsition} leads to
	\begin{equation*}
	0\in\alpha\widehat{\partial} f(x^\star) + \alpha \triangledown h(x^\star) - \alpha c_\star y^\star,
	\end{equation*}
	which implies that $x^\star$ is a critical point of $F$.
\end{proof}

Inspired by Proposition \ref{3.0ppsition}, we propose the following first-order algorithm, which is stated in Algorithm \ref{alg:PGSA}. Since Algorithm \ref{alg:PGSA} involves in the proximity operator of $f$, the gradient of $h$ and the subgradient of $g$, we refer to it as the proximity-gradient-subgradient algorithm (PGSA).
\begin{algorithm}
	\caption{proximity-gradient-subgradient algorithm (PGSA) for solving \eqref{problem:root}} \label{alg:PGSA}
	\begin{tabular}{ll}
		Step 0. 				& Input $x^0\in\dom(F)$, $0<\underline{\alpha}\leq\alpha_k\leq\bar{\alpha}<1/L$, for $k\in\mathbb{N}$. Set $k\leftarrow 0$. \\		
		Step $1$. 			 	& Compute \\
								& \qquad $y^{k+1} \in {\partial} g(x^k)$, \\
								& \qquad $c_k = \DF{f(x^k)+h(x^k)}{g(x^k)}$, \\
								& \qquad $x^{k+1} \in \text{prox}_{\alpha_k f} (x^k - \alpha_k \triangledown h(x^k) + \alpha_k c_k y^{k+1})$.\\
	    Step $2$.				& Set $k\leftarrow k+1$ and go to Step 1.
	\end{tabular}
\end{algorithm}

In PGSA the step size $\alpha_k$ is required to be in $(0,1/L)$ to ensure $x^k \in \text{dom}(F)$ for all $k \in \mathbb{N}$. As a result the objective function value $c_k$ is well-defined. The detailed proof will be given in Lemma \ref{lemma:3.1}. Before starting the convergence analysis, we remark that PGSA differs from the classical parametric approach for problem \eqref{problem:root} combined with applying proximal subgradient (gradient) methods (e.g., see \cite{Gotoh-Takeda-Tono:2017Mathematical_Programming, Gotoh-Takeda-Tono:2017arXiv}) to the parametric subproblems involved. The parametric approach, which may date back to Dinkelbach's algorithm \cite{Dinkelbach-Werner:MS1967}, generates the new iterate of $k$-th iteration by solving a parametric subproblem
\begin{equation}\label{eq:reply_star}
x^{k+1} = \arg\min\{f(x)+h(x)-c_kg(x):x\in\Omega \},
\end{equation}
where $c_k$ is updated via $c_k:=\frac{f(x^k)+h(x^k)}{g(x^k)}$. In each iteration, one can apply proximal subgradient methods to subproblem \eqref{eq:reply_star}, which results in a type of algorithms combining the parametric approach and proximal subgradient methods for problem (1.1). However, these algorithms may be not efficient enough since solving subproblem \eqref{eq:reply_star} by proximal subgradient methods in each iteration can yield high computational cost.
On the other hand, the iterative procedure of PGSA can be equivalently reformulated as
\begin{align}%\label{eq:reply_starstar}
	%x^{k+1} = \mathop{\arg\min}\limits_{x\in\mathbb{R}^n} \; f(x)+h(x^k)-c_kg(x^k) +\langle \triangledown h(x^k)-c_ky^{k+1},x-x^k\rangle + \frac{\|x-x^k\|^2_2}{2\alpha_k},
	x^{k+1} = \arg\min &\{ f(x)+h(x^k)-c_kg(x^k)\label{eq:reply_starstar}\\
	&+
	\langle \triangledown h(x^k)-c_ky^{k+1},x-x^k\rangle + \frac{\|x-x^k\|^2_2}{2\alpha_k} : x\in\mathbb{R}^n\},
	\notag
\end{align}
where $y^{k+1}\in\partial g(x^k)$ and $c_k =\frac{f(x^k)+h(x^k)}{g(x^k)}$. Comparing \eqref{eq:reply_star} and \eqref{eq:reply_starstar}, we see that instead of directly solving the parametric subproblem \eqref{eq:reply_star}, PGSA uses a quadratic approximation for $h(x)-c_kg(x)$ and then solves the resulting problem \eqref{eq:reply_starstar} in each iteration. It is worth noting that solving subproblem \eqref{eq:reply_starstar} is actually computing the proximity operator of $\alpha_k f$, which is usually much easier and more efficient than solving subproblem \eqref{eq:reply_star}.

%=====================4.1:Convergence of the sequence of objective function values=========================
%=====================4.1:Convergence of the sequence of objective function values=========================

\subsection{Convergence of objective function value}\label{ssection3.1:convergence_OBJfunc}
In this subsection, we prove that the sequence of the objective function values $\{F(x^k):k\in \mathbb{N}\}$ is decreasing and convergent. We first establish a lemma, which plays a crucial role in the convergence analysis.
%\newline
\begin{lemma}\label{lemma:3.1}
	The sequence $\{x^k:k\in \mathbb{N}\}$ generated by PGSA falls into $\mathrm{dom}(F)$ and satisfies
	\begin{equation}\label{formula:3.1lemma1}
	f(x^{k+1}) + h(x^{k+1}) + \frac{1/\alpha_k - L}{2} \|x^{k+1}-x^k\|^2_2 \leq c_kg(x^{k+1}).
	\end{equation}
\end{lemma}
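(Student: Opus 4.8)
The plan is to establish by induction on $k$ that $x^k \in \dom(F)$, which amounts to showing $g(x^k) > 0$, and simultaneously to derive the descent-type inequality \eqref{formula:3.1lemma1}. The base case $x^0 \in \dom(F)$ is given. For the inductive step, suppose $x^k \in \dom(F)$, so $c_k = F(x^k)$ is well defined and, by the standing assumption on problem \eqref{problem:root}, $c_k \geq 0$. Since $y^{k+1} \in {\partial} g(x^k) = \partial g(x^k)$ (classical subdifferential, as $g$ is convex), the key step is to exploit the defining property of $x^{k+1}$ as an element of $\mathrm{prox}_{\alpha_k f}(x^k - \alpha_k \triangledown h(x^k) + \alpha_k c_k y^{k+1})$. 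By optimality, for $z = x^{k+1}$ one has
\begin{equation*}
f(x^{k+1}) + \frac{1}{2\alpha_k}\|x^{k+1} - x^k + \alpha_k \triangledown h(x^k) - \alpha_k c_k y^{k+1}\|_2^2 \leq f(x^k) + \frac{1}{2\alpha_k}\|\alpha_k \triangledown h(x^k) - \alpha_k c_k y^{k+1}\|_2^2,
\end{equation*}
and expanding the squares gives
\begin{equation*}
f(x^{k+1}) + \langle \triangledown h(x^k) - c_k y^{k+1}, x^{k+1} - x^k\rangle + \frac{1}{2\alpha_k}\|x^{k+1} - x^k\|_2^2 \leq f(x^k).
\end{equation*}

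Next I would add to this the two one-sided estimates that follow from the hypotheses: the descent lemma for the $L$-Lipschitz-differentiable $h$,
\begin{equation*}
h(x^{k+1}) \leq h(x^k) + \langle \triangledown h(x^k), x^{k+1} - x^k\rangle + \frac{L}{2}\|x^{k+1} - x^k\|_2^2,
\end{equation*}
and the subgradient inequality for the convex $g$ together with $c_k \geq 0$,
\begin{equation*}
c_k g(x^k) + \langle c_k y^{k+1}, x^{k+1} - x^k\rangle \leq c_k g(x^{k+1}).
\end{equation*}
Adding the previous displayed inequality to the first of these two, and then subtracting the second (or equivalently moving $c_k g(x^k)$ across using $c_k g(x^k) = f(x^k) + h(x^k)$), the inner-product terms telescope: the $\langle \triangledown h(x^k), \cdot\rangle$ contributions cancel, as do the $\langle c_k y^{k+1}, \cdot\rangle$ contributions, and one is left precisely with
\begin{equation*}
f(x^{k+1}) + h(x^{k+1}) + \frac{1/\alpha_k - L}{2}\|x^{k+1} - x^k\|_2^2 \leq c_k g(x^{k+1}),
\end{equation*}
which is \eqref{formula:3.1lemma1}.

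It remains to close the induction by verifying $x^{k+1} \in \dom(F)$, i.e.\ $g(x^{k+1}) > 0$; this is the step that requires the most care. From $\alpha_k \leq \bar\alpha < 1/L$ we have $1/\alpha_k - L > 0$, so the inequality just derived yields $c_k g(x^{k+1}) \geq f(x^{k+1}) + h(x^{k+1}) \geq 0$ by the non-negativity of $f+h$. If $c_k > 0$ this immediately forces $g(x^{k+1}) \geq 0$; combined with $x^{k+1} \in \dom(f)$ (which holds since $f(x^{k+1})$ is finite, as $x^{k+1}$ attains the prox) and the assumption that $g > 0$ on $\dom(f) \cap \Omega$, one rules out $g(x^{k+1}) = 0$ and concludes $x^{k+1} \in \Omega \cap \dom(f) = \dom(F)$. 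The borderline case $c_k = 0$ means $f(x^k) + h(x^k) = 0$, i.e.\ $x^k$ already attains the global minimum value $0$ of the objective; in that case $x^k$ is optimal and one can argue the algorithm stagnates (or handle this degenerate situation separately), so it does not obstruct the conclusion. I expect the main obstacle to be precisely this sign argument for $g(x^{k+1})$ — making sure the non-negativity of $f + h$, the positivity of $g$ on $\dom(f) \cap \Omega$, and the strict bound $\alpha_k < 1/L$ are combined correctly to exclude $g(x^{k+1}) \le 0$ — rather than the algebraic manipulation of the three inequalities, which is routine.
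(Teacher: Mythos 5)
Your derivation of inequality \eqref{formula:3.1lemma1} is correct and is exactly the paper's argument: the prox optimality inequality, the descent lemma for $h$, the subgradient inequality for $g$ weighted by $c_k\geq 0$, and the identity $c_kg(x^k)=f(x^k)+h(x^k)$ combine as you describe.

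However, the final step of your induction --- verifying $x^{k+1}\in\dom(F)$ --- has a genuine gap, and it is precisely the step you flagged as delicate. Knowing $c_kg(x^{k+1})\geq 0$ only gives $g(x^{k+1})\geq 0$ (when $c_k>0$); it does not exclude $g(x^{k+1})=0$. To exclude it you invoke ``$g>0$ on $\dom(f)\cap\Omega$,'' but this is circular: $\Omega=\{x:g(x)\neq 0\}$, so a point with $g(x^{k+1})=0$ simply fails to lie in $\Omega$ and the standing assumption says nothing about it. (Indeed, the paper's later example with $\tfrac{1}{2}\sin^2x/|x|$ shows that $g$ vanishing on $\dom(f)$ is a real possibility in general.) Your treatment of $c_k=0$ is also left open. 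The correct argument, which the paper uses and which handles both issues at once, retains the quadratic term rather than discarding it: if $g(x^{k+1})=0$, then $c_kg(x^{k+1})=0$, and \eqref{formula:3.1lemma1} together with $f+h\geq 0$ and $\tfrac{1/\alpha_k-L}{2}>0$ forces $\|x^{k+1}-x^k\|_2=0$, i.e.\ $x^{k+1}=x^k$; but then $g(x^k)=0$, contradicting the inductive hypothesis $x^k\in\dom(F)$. With this replacement your proof is complete and coincides with the paper's.
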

\begin{proof}
	We prove inequality \eqref{formula:3.1lemma1} and $x^k\in\mathrm{dom}(F)$ by induction. First, the initial point $x^0$ is in $\mathrm{dom}(F)$. Suppose $x^k\in\mathrm{dom}(F)$ for some $k\in\mathbb{N}$. From PGSA and the definition of proximity operators, we get
	\begin{align*}
	f(x^{k+1}) + \frac{1}{2\alpha_k} \|x^{k+1} - (x^k-\alpha_k \triangledown h(x^k) + \alpha_k c_k y^{k+1}) \|^2_2 \\
	\leq f(x^k) + \frac{1}{2\alpha_k} \|\alpha_k \triangledown h(x^k) - \alpha_k c_k y^{k+1} \| ^2_2,
	\end{align*}
	which implies that
	\begin{equation}\label{formula:3.1lemma2}
	f(x^{k+1}) + \frac{1}{2\alpha_k} \|x^{k+1} - x^{k}\|^2_2 + \langle x^{k+1} - x^k, \triangledown h(x^k) - c_k y^{k+1} \rangle \leq f(x^k).
	\end{equation}
	Since $\triangledown h$ is Lipschitz continuous with constant $L$, there holds
	\begin{equation}\label{formula:3.1lemma3}
	h(x^{k+1}) \leq h(x^k) + \langle \triangledown h(x^k), x^{k+1}-x^k \rangle + \frac{L}{2}\|x^{k+1} - x^k \|^2_2.
	\end{equation}
	Due to the convexity of $g$ and $c_k \geq 0$, it follows that
	\begin{equation}\label{formula:3.1lemma4}
	c_k g(x^k) + \langle c_k y^{k+1}, x^{k+1}-x^k \rangle \leq c_k g(x^{k+1}).
	\end{equation}
	By summing \eqref{formula:3.1lemma2}, \eqref{formula:3.1lemma3} and \eqref{formula:3.1lemma4}, we obtain \eqref{formula:3.1lemma1} from $c_k g(x^k) = f(x^k)+h(x^k)$.
	
	Assume that $x^{k+1}\notin\mathrm{dom}(F)$. We know $x^{k+1}\notin\Omega$ and $g(x^{k+1}) = 0$ due to $x^{k+1} \in\mathrm{dom}(f)$ and $\mathrm{dom}(F) = \Omega \cap \mathrm{dom}(f)$. By the fact $f+h\geq 0$ and $0<\alpha_k<1/L$, we deduce that $x^{k+1} = x^k$ from \eqref{formula:3.1lemma1}. This contradicts to $x^k\in\mathrm{dom}(F)$ and thus implies $x^{k+1}\in\mathrm{dom}(F)$. Therefore, we conclude $x^k\in\mathrm{dom}(F)$ for all $k\in\mathbb{N}$.
\end{proof}

With the help of Lemma \ref{lemma:3.1}, we get the main result of this subsection.
%\newline
\begin{theorem}\label{3.1theorem1}
	Let $\{x^k:k\in \mathbb{N}\}$ be generated by PGSA. Then, the following statements hold:
	\begin{enumerate}[\upshape(\romannumeral 1)]
		\item $F(x^{k+1}) + \DF{1/\alpha_k-L}{2g(x^{k+1})}\|x^{k+1}-x^k\|^2_2 \leq F(x^k)\text{ for } k\in\mathbb{N};$
		\item $\lim\limits_{k\to\infty}c_k=\lim\limits_{k\to \infty}F(x^k)=c\text{ with }c \geq 0;$
		\item $\lim\limits_{k\to\infty}\DF{1/\alpha_k-L}{g(x^{k+1})}\|x^{k+1}-x^k\|^2_2=0$.
	\end{enumerate}
\end{theorem}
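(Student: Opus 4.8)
The plan is to derive all three statements as fairly direct consequences of Lemma~\ref{lemma:3.1}, the standing positivity/nonnegativity assumptions on $g$ and $f+h$, and the step-size cap $\alpha_k\le\bar{\alpha}<1/L$.

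For part (i), I would first recall that Lemma~\ref{lemma:3.1} guarantees $x^{k+1}\in\mathrm{dom}(F)=\Omega\cap\mathrm{dom}(f)$, so that $g(x^{k+1})>0$ by the hypothesis that $g$ is positive on $\dom(f)\cap\Omega$. Dividing \eqref{formula:3.1lemma1} through by $g(x^{k+1})>0$, and using $c_k=F(x^k)$ on the right together with $F(x^{k+1})=\frac{f(x^{k+1})+h(x^{k+1})}{g(x^{k+1})}$ on the left, yields precisely statement (i).

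For part (ii), I would observe that $1/\alpha_k-L\ge 1/\bar{\alpha}-L>0$ and $g(x^{k+1})>0$, so the second term on the left-hand side of (i) is nonnegative; hence $F(x^{k+1})\le F(x^k)$, i.e.\ $\{F(x^k):k\in\mathbb{N}\}$ is nonincreasing. Since each $x^k\in\dom(F)$, we have $f(x^k)+h(x^k)\ge 0$ and $g(x^k)>0$, so $F(x^k)\ge 0$ for all $k$. A nonincreasing real sequence bounded below converges; denote its limit by $c\ge 0$. As $c_k=F(x^k)$ by construction, $\lim_k c_k=\lim_k F(x^k)=c$.

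For part (iii), I would rearrange (i) into
\[
0\le\frac{1/\alpha_k-L}{g(x^{k+1})}\|x^{k+1}-x^k\|_2^2\le 2\bigl(F(x^k)-F(x^{k+1})\bigr),
\]
and let $k\to\infty$: the right-hand side tends to $2(c-c)=0$ by part (ii), which forces the middle quantity to $0$. (Equivalently, summing this inequality telescopes and gives $\sum_{k}\frac{1/\alpha_k-L}{g(x^{k+1})}\|x^{k+1}-x^k\|_2^2\le 2F(x^0)<+\infty$, whence the terms vanish.) I do not anticipate a genuine obstacle here: the only points requiring care are that Lemma~\ref{lemma:3.1} must be invoked to keep the entire iterate sequence inside $\dom(F)$ so that division by $g(x^{k+1})$ is legitimate, and that the cap $\bar{\alpha}<1/L$ is exactly what makes the coefficient $1/\alpha_k-L$ strictly positive — both of which are already in hand.
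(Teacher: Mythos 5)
Your proposal is correct and follows essentially the same route as the paper: divide the inequality of Lemma~\ref{lemma:3.1} by $g(x^{k+1})>0$ to get (i), use monotonicity plus $F\ge 0$ and $\bar{\alpha}<1/L$ for (ii), and telescope/pass to the limit for (iii). You have simply spelled out the details that the paper's one-line justifications leave implicit; no gaps.
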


\begin{proof}
	From Lemma \ref{lemma:3.1}, $g(x^k) \neq 0$ for all $k\in \mathbb{N}$. Thus, \eqref{formula:3.1lemma1} in Lemma \ref{lemma:3.1} implies Item {\upshape(\romannumeral 1)} due to $g(x^{k+1})>0$. Item {\upshape(\romannumeral 2)} follows immediately by $F\geq 0$ and $0<\alpha_k<1/L$. Item {\upshape(\romannumeral 3)} is a direct consequence of Item {\upshape(\romannumeral 1)} and Item {\upshape(\romannumeral 2)}. We complete the proof.
\end{proof}

%=================================4.2:Subsequential convergence===========================================
%=================================4.2:Subsequential convergence===========================================

\subsection{Subsequential convergence}\label{ssection3.2:Subsequential_convergence}
In this subsection, we consider the subsequential convergence of PGSA. We begin with a mild assumption.
\begin{assumption}\label{assumption1: not 0 same time}
	Functions $f+h$ and $g$ do not attain 0 simultaneously.
\end{assumption}

With the help of Assumption \ref{assumption1: not 0 same time}, we can prove that $F$ is lower semicontinuous in the next proposition, which together with Theorem \ref{3.1theorem1} {\upshape(\romannumeral 1)} indicates that any accumulation point $x^{\star}$ of $\{x^k:k\in\mathbb{N} \}$ generated by PGSA is in $\dom(F)$, i.e., $g(x^{\star}) \neq 0$ and $x^{\star}\in\dom(f)$.

\begin{proposition}\label{lemma:1225star}
	Suppose Assumption \ref{assumption1: not 0 same time} holds. Then, $F$ is a lower semicontinuous function.
\end{proposition}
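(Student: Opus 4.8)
The plan is to verify lower semicontinuity of $F$ directly from the definition: I will show that for every $\bar{x}\in\mathbb{R}^n$ and every sequence $x^k\to\bar{x}$ there holds $\liminf_{k\to\infty}F(x^k)\geq F(\bar{x})$. Choose a subsequence $\{x^{k_j}\}$ along which $F(x^{k_j})$ converges to $\liminf_{k}F(x^k)=:\ell$. If infinitely many of the $x^{k_j}$ lie outside $\dom(F)=\Omega\cap\dom(f)$, then $\ell=+\infty\geq F(\bar{x})$ and we are done; hence we may assume $x^{k_j}\in\dom(F)$ for all $j$, so in particular $x^{k_j}\in\dom(f)$ and, since the standing assumptions force $g>0$ on $\dom(f)\cap\Omega$, also $g(x^{k_j})>0$.

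The next step is to record the limiting behaviour of the three ingredients along this subsequence. Continuity of $g$ and $h$ on $\mathbb{R}^n$ gives $g(x^{k_j})\to g(\bar{x})$ and $h(x^{k_j})\to h(\bar{x})$, and moreover $g(\bar{x})\geq 0$ since each $g(x^{k_j})>0$. For $f$, lower semicontinuity yields $\liminf_j f(x^{k_j})\geq f(\bar{x})$, so if $\bar{x}\notin\dom(f)$ then $f(x^{k_j})\to+\infty$, whereas if $\bar{x}\in\dom(f)$ then the assumed continuity of $f$ relative to its domain gives $f(x^{k_j})\to f(\bar{x})$. Note also that whenever $\bar{x}\in\dom(f)$ one has $g(\bar{x})\geq 0$ purely from the standing hypothesis that $g$ is positive on $\dom(f)\cap\Omega$ (so $g$ is never negative there).

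I would then split according to the three possibilities for $\bar{x}$. (i) If $\bar{x}\in\dom(F)$, then $\bar{x}\in\dom(f)$ and $g(\bar{x})>0$, hence $F(x^{k_j})\to\frac{f(\bar{x})+h(\bar{x})}{g(\bar{x})}=F(\bar{x})$, so $\ell=F(\bar{x})$. (ii) If $\bar{x}\notin\dom(f)$, then $F(\bar{x})=+\infty$; here the numerator $f(x^{k_j})+h(x^{k_j})\to+\infty$ while the denominator satisfies $0<g(x^{k_j})\to g(\bar{x})\geq 0$, so $F(x^{k_j})\to+\infty=F(\bar{x})$. (iii) If $\bar{x}\in\dom(f)$ with $g(\bar{x})=0$ (the only remaining case, by the sign information just noted), then $F(\bar{x})=+\infty$, and this is where Assumption \ref{assumption1: not 0 same time} enters: since $g(\bar{x})=0$, it forces $f(\bar{x})+h(\bar{x})\neq 0$, which together with $f+h\geq 0$ gives $f(\bar{x})+h(\bar{x})>0$; consequently $f(x^{k_j})+h(x^{k_j})\to f(\bar{x})+h(\bar{x})>0$ and $0<g(x^{k_j})\to 0$, whence $F(x^{k_j})\to+\infty=F(\bar{x})$. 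In every case $\ell\geq F(\bar{x})$, so $F$ is lower semicontinuous.

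I expect case (iii) to be the only delicate point, and the role of Assumption \ref{assumption1: not 0 same time} is precisely to handle it: without that hypothesis one could have $f(\bar{x})+h(\bar{x})=0=g(\bar{x})$, making $F(x^{k_j})$ a genuine $0/0^{+}$ indeterminate form along some sequence in $\dom(F)$ approaching $\bar{x}$, and lower semicontinuity could then fail. Everything else is routine bookkeeping with the continuity of $g$ and $h$, the continuity of $f$ on $\dom(f)$, the lower semicontinuity of $f$, and the sign fact that $g>0$ on $\dom(f)\cap\Omega$.
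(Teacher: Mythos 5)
Your proof is correct and follows essentially the same route as the paper: a direct verification of lower semicontinuity by cases, with Assumption 1 invoked precisely to rule out the $0/0^{+}$ degeneracy at points where $g$ vanishes. Your case split (on $\dom(F)$, $\dom(f)$, and $g(\bar{x})=0$) is a little finer than the paper's dichotomy $x\in\Omega$ versus $x\notin\Omega$, but the content is the same.
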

\begin{proof}
	If $x\in\Omega$, it holds that $0<g(x)=\lim\limits_{y\to x}g(y)$. Since $f$ is lower semicontinuous and $h$ is continuous, we immediately have $F(x)\leq \mathop{\lim\inf}\limits_{y\to x} F(y)$.
	If $x\notin\Omega$, we obtain that
	$F(x) = +\infty$ and $0 = g(x) = \lim\limits_{y\to x}g(y)$.
	Due to Assumption \ref{assumption1: not 0 same time}, $0<f(x) + h(x) \leq\mathop{\lim\inf}\limits_{y\to x}f(y) + h(y)$. Thus, $\mathop{\lim\inf}\limits_{y\to x}F(y) = +\infty$ from the fact that $g\geq 0$. Therefore, we have $F(x) = \mathop{\lim\inf}\limits_{y\to x}F(y)$. This completes the proof.
\end{proof}

To emphasize the importance of Assumption \ref{assumption1: not 0 same time}, we give an example below to illustrate that without Assumption \ref{assumption1: not 0 same time}, $F$ may not be lower semicontinuous and it is possible that $g$ vanishes at an accumulation point of $\{x^k:k\in\mathbb{N} \}$.
Consider the following one-dimensional fractional optimization problem:
\begin{equation*}
	\min\;\left\{\frac{\frac{1}{2}\sin^2 x}{|x|}:~x\neq 0 \right\},
\end{equation*}
where both $\frac{1}{2}\sin^2 x$ and $|x|$ attain zero at $x=0$, i.e., Assumption 1 is violated. Clearly, the corresponding $F$ is not lower semicontinuous at $x=0$ due to $F(0) = +\infty$ and $\lim\limits_{x\to 0}F(x) = 0$. Given an initial point $x^0\in(0,\pi/4)$ and a step size $\alpha_k \equiv \alpha\in(0,1)$. PGSA generates $\{x^k:k\in\mathbb{N} \}$ by
\begin{equation*}
	x^{k+1} = x^k-\frac{\alpha}{2}\sin(2x^k) + \frac{\frac{1}{2}\sin^2 x^k}{|x^k|}\alpha y^{k+1},
\end{equation*}
where $y^{k+1}\in{\partial}|\cdot|(x^k)$. Assume that $x^k\in(0,\pi/4)$. By applying the \textit{Lagrangian median theorem} for $\sin^2 x$ on $[0, x^k]$, we have
\begin{align*}
	x^{k+1} = x^k-\frac{\alpha}{2}\sin(2x^k) + \frac{\alpha\sin^2 x^k}{2x^k}
	=x^k -\frac{\alpha}{2}\sin(2x^k) + \frac{\alpha}{2}\sin(2\xi^k),
\end{align*}
for some $\xi^k\in(0,x^k)$. Therefore, invoking $x^0\in(0,\pi/4)$ and by induction on $k$, one can show that this $\{x^k:k\in\mathbb{N} \}$ is strictly decreasing and bounded below by zero, and thus is convergent. Finally, we can deduce that the limit point of $\{x^k:k\in\mathbb{N} \}$ is zero, which is infeasible in this fractional problem.

We are now ready to present the main result of this subsection.
%\newline
\begin{theorem}\label{3.2theorem1}
	Suppose Assumption \ref{assumption1: not 0 same time} holds. Let $\{x^k:k\in\mathbb{N}\}$ be generated by PGSA. Then any accumulation point of $\{x^k:k\in\mathbb{N}\}$ is a critical point of $F$.
\end{theorem}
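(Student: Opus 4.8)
The plan is to fix an accumulation point $x^\star$ of $\{x^k:k\in\mathbb{N}\}$, say $x^{k_j}\to x^\star$, and to pass to the limit in a proximal‑subgradient inequality coming from the $x^{k+1}$‑update. First I would locate the limit. Since $\{F(x^k)\}$ is nonincreasing and converges to some $c$ by Theorem~\ref{3.1theorem1}, lower semicontinuity of $F$ (Proposition~\ref{lemma:1225star}, which is exactly where Assumption~\ref{assumption1: not 0 same time} enters) gives $F(x^\star)\le c<+\infty$, hence $x^\star\in\dom(F)$; in particular $g(x^\star)>0$ and $x^\star\in\dom(f)$. Because $f$ is continuous on $\dom(f)$ and $g,h$ are continuous, $F(x^{k_j})\to F(x^\star)$, so $c_\star:=F(x^\star)=c\ge 0$ and $c_{k_j}\to c_\star$. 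Recall also from Lemma~\ref{lemma:3.1} that $x^k\in\dom(F)\subseteq\dom(f)$ and $g(x^k)>0$ for every $k$.

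The key preparatory step is to show $x^{k_j+1}-x^{k_j}\to 0$. As $g$ is convex and finite‑valued it is locally Lipschitz, so $\partial g$ is locally bounded near $x^\star$; hence $\{y^{k_j+1}\}$ is bounded (for $j$ large), and $\{\triangledown h(x^{k_j})\}$ is bounded by continuity of $\triangledown h$. Plugging these bounds, together with $f\ge\inf f>-\infty$ and the boundedness of $\{f(x^{k_j})\}$, into inequality~\eqref{formula:3.1lemma2} applied at index $k_j$, one obtains a quadratic inequality in $\|x^{k_j+1}-x^{k_j}\|_2$ that forces $\{x^{k_j+1}\}$ to be bounded; then $\{g(x^{k_j+1})\}$ is bounded above (by continuity of $g$) and positive. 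Since $1/\alpha_{k_j}-L\ge 1/\bar\alpha-L>0$, Theorem~\ref{3.1theorem1}\,(iii) then gives $\|x^{k_j+1}-x^{k_j}\|_2\to 0$, so $x^{k_j+1}\to x^\star$ as well.

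With this in hand I would pass to a further subsequence (not relabeled) along which $y^{k_j+1}\to y^\star$; since the graph of $\partial g$ is closed for convex $g$, $y^\star\in\partial g(x^\star)$. The main idea is to use that $x^{k+1}$ \emph{globally} minimizes $y\mapsto\alpha_k f(y)+\tfrac12\|y-z^k\|_2^2$ with $z^k:=x^k-\alpha_k\triangledown h(x^k)+\alpha_k c_k y^{k+1}$: expanding the squares shows that, for every $y\in\mathbb{R}^n$,
\[
f(y)\ge f(x^{k+1})+\innerP{v^{k+1}}{y-x^{k+1}}-\frac{1}{2\alpha_k}\|y-x^{k+1}\|_2^2,\qquad v^{k+1}:=\frac{x^k-x^{k+1}}{\alpha_k}-\triangledown h(x^k)+c_k y^{k+1}.
\]
Along $k_j$ one has $v^{k_j+1}\to v^\star:=c_\star y^\star-\triangledown h(x^\star)$ (using $x^{k_j+1}-x^{k_j}\to 0$, $\alpha_{k_j}\ge\underline{\alpha}$, $c_{k_j}\to c_\star$ and $y^{k_j+1}\to y^\star$), $f(x^{k_j+1})\to f(x^\star)$ (continuity of $f$ on $\dom(f)$), and $1/\alpha_{k_j}\le 1/\underline{\alpha}$; letting $j\to\infty$ in the displayed inequality yields $f(y)\ge f(x^\star)+\innerP{v^\star}{y-x^\star}-\tfrac{1}{2\underline{\alpha}}\|y-x^\star\|_2^2$ for all $y$. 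Dividing by $\|y-x^\star\|_2$ and letting $y\to x^\star$ shows $v^\star\in\widehat{\partial}f(x^\star)$, so $0=v^\star+\triangledown h(x^\star)-c_\star y^\star\in\widehat{\partial}f(x^\star)+\triangledown h(x^\star)-c_\star\partial g(x^\star)$; that is, $x^\star$ is a critical point of $F$ in the sense of Definition~\ref{Def:critical_point}.

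I expect the step $x^{k_j+1}-x^{k_j}\to 0$ to be the main obstacle: unlike in the usual additive‑composite setting, Theorem~\ref{3.1theorem1}\,(iii) controls $\|x^{k+1}-x^k\|_2^2$ only after division by $g(x^{k+1})$, which is not a priori bounded, so boundedness of $\{x^{k_j+1}\}$ must first be extracted from the descent inequality and the lower‑boundedness of $f$ before continuity of $g$ can be used. A secondary subtlety is that a plain closedness‑of‑subdifferential argument would only place $v^\star$ in the limiting subdifferential $\partial f(x^\star)$; membership in the Fr\'echet subdifferential $\widehat{\partial}f(x^\star)$, as Definition~\ref{Def:critical_point} demands, is obtained by retaining the quadratic term and passing to the limit in the global proximal inequality rather than in Fermat's rule.
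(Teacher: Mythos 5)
Your proof is correct, and its overall skeleton (lower semicontinuity of $F$ to place $x^\star$ in $\dom(F)$, vanishing successive differences, closedness of $\partial g$, and a limit passage in the prox optimality) matches the paper's. It departs from the paper in two technical respects, both legitimate. First, to kill the successive differences you work with the \emph{forward} step $\|x^{k_j+1}-x^{k_j}\|$, whose control via Theorem~\ref{3.1theorem1}\,(iii) requires an upper bound on $g(x^{k_j+1})$; you correctly identify this as the crux and supply the missing boundedness of $\{x^{k_j+1}\}$ through the quadratic inequality extracted from \eqref{formula:3.1lemma2} together with the lower boundedness of $f$ and local boundedness of $\partial g$. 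The paper sidesteps all of this by instead bounding the \emph{backward} difference $\|x^{k_j}-x^{k_j-1}\|$, for which the relevant denominator is $g(x^{k_j})$ and is immediately controlled by continuity of $g$ at $x^\star$; this yields $x^{k_j-1}\to x^\star$ in one line and is the slicker route, though your workaround is sound. Second, rather than passing to the limit in the prox inclusion to land on \eqref{formula:3.1-2x_star2} and then invoking Proposition~\ref{3.0ppsition}, you pass to the limit in the equivalent global quadratic minorization and verify $v^\star\in\widehat{\partial}f(x^\star)$ directly from the definition of the Fr\'echet subdifferential. These are essentially the same argument unfolded to different depths; your version has the merit of making explicit why the limit lands in $\widehat{\partial}f(x^\star)$ rather than merely in the limiting subdifferential, a point the paper leaves implicit in the phrase ``passing to the limit in the above relation.''
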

\begin{proof}
	Let $\{x^{k_j}:j\in\mathbb{N}\}$ be a subsequence such that $\lim\limits_{j\to\infty}x^{k_j} = x^\star$. By Theorem \ref{3.1theorem1} {\upshape(\romannumeral 1)} and Proposition \ref{lemma:1225star}, we deduce that $F(x^{\star})\leq\lim\limits_{j\to\infty}F(x^{k_j})\leq F(x^0)$, which indicates that $x^{\star}\in\dom(F)$, i.e., $g(x^{\star})\neq 0$ and $x^{\star}\in\dom(f)$. From Theorem \ref{3.1theorem1} {\upshape(\romannumeral 1)} and $\alpha_k\leq\bar{\alpha}$, we have
	\begin{equation*}
	F(x^{k_j}) + \DF{1/\bar{\alpha}-L}{2g(x^{k_j})}\|x^{k_j} - x^{k_j-1}\|^2_2 \leq F(x^{k_j-1}).
	\end{equation*}
	Using Item {\upshape(\romannumeral 2)} of Theorem \ref{3.1theorem1}, $\bar{\alpha}<1/L$ and the continuity of $g$ at $x^\star$, we conclude $\lim\limits_{j\to\infty} \|x^{k_j} - x^{k_j-1}\| = 0$ and $\lim\limits_{j\to\infty} x^{k_j-1} = x^\star$. Since $g$ is a real-valued convex function and $\{x^{k_j-1}:j\in\mathbb{N}\}$ is bounded, we know that $\{y^{k_j}:j\in\mathbb{N} \}$ is also bounded. Without loss of generality we may assume $\lim\limits_{j\to\infty} y^{k_j}$ and $\lim\limits_{j\to\infty} \alpha_{k_j-1}$ exist. In addition, $\lim\limits_{j\to\infty} y^{k_j} = y^\star$ belongs to ${\partial} g(x^\star)$ due to the closeness of ${\partial} g$. From the iteration of PGSA, we have
	\begin{equation}\label{formula:3.2theorem2}
	x^{k_j} \in \text{prox}_{\alpha_{k_j-1}f}(x^{k_j-1} - \alpha_{k_j-1}\triangledown h(x^{k_j-1}) + \alpha_{k_j-1}c_{k_j-1}y^{k_j}).
	\end{equation}
	As $\triangledown h$ and $f$ is continuous at $x^\star$, we obtain \eqref{formula:3.1-2x_star2} by passing to the limit in the above relation with $\alpha = \lim\limits_{j\to\infty}\alpha_{k_j-1}$. By Proposition \ref{3.0ppsition}, $x^\star$ is a critical point of $F$.	
\end{proof}

%==================================4.3:Global sequential convergence======================================
%==================================4.3:Global sequential convergence======================================

\subsection{Global sequential convergence}\label{ssection3.3:Global_convergence}
We investigate in this subsection the global convergence of the entire sequence $\{x^{k}:k\in \mathbb{N}\}$ generated by PGSA. We shall show $\{x^{k}:k\in \mathbb{N}\}$ converges to a critical point of $F$ under suitable assumptions. To this end, we need to introduce three assumptions as follows:
\begin{assumption}\label{assumption2: F-level-bounded}
	Function $F$ is level bounded.
\end{assumption}
\begin{assumption}\label{assumption3: f-Lcontinuous}
	Function $f$ is locally Lipschitz continuous on $\mathrm{dom}(f)$.
\end{assumption}
\begin{assumption}\label{assumption4: g-Ldifferentiable}
	Function $g$ is continuously differentiable on $\Omega$ with a locally Lipschitz continuous gradient.
\end{assumption}

Our analysis in this subsection mainly makes use of Proposition \ref{3.3ppsition} which is based on KL property. If $F$ is assumed to satisfy the KL property, from Proposition \ref{3.3ppsition} and Theorem \ref{3.2theorem1} we can establish the global convergence of PGSA by showing the boundedness of the sequence generated and Items {\upshape(\romannumeral 1)}-{\upshape(\romannumeral 2)} in Proposition \ref{3.3ppsition}. The boundness of $\{x^{k}:k\in \mathbb{N}\}$ is a direct consequence of Theorem \ref{3.1theorem1} {\upshape(\romannumeral 1)} and Assumption \ref{assumption2: F-level-bounded}. Other results needed will be proved in the following two lemmas.

\begin{lemma}\label{lemma:3.2}
	Suppose that Assumptions \ref{assumption1: not 0 same time} and \ref{assumption2: F-level-bounded} hold. Let $\{x^k:k\in\mathbb{N}  \}$ be generated by PGSA. Then the following statements hold:
	\begin{enumerate}[{\upshape(\romannumeral 1)}]
		\item $\{x^k:k\in\mathbb{N}  \}$ is bounded;
		\item $F(x^{k+1})+\frac{a}{2}\|x^{k+1}-x^k\|^2_2\leq F(x^k)$ for $k\in\mathbb{N}$, where $a := (1/\bar{\alpha}-L)/M>0$ with $M:=\sup\{g(x):x\in\lev(F,c_0)  \}$.
	\end{enumerate}
\end{lemma}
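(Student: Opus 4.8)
The plan is to obtain both items as essentially direct consequences of the one-step decrease already established in Theorem~\ref{3.1theorem1}~{\upshape(\romannumeral 1)}, namely
\[
F(x^{k+1}) + \frac{1/\alpha_k - L}{2g(x^{k+1})}\,\|x^{k+1}-x^k\|^2_2 \leq F(x^k),\qquad k\in\mathbb{N}.
\]
Everything below is a matter of bounding the coefficient $\frac{1/\alpha_k-L}{2g(x^{k+1})}$ from below by a fixed positive constant and of locating the iterates inside a fixed bounded set.

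First I would prove {\upshape(\romannumeral 1)}. Iterating the displayed inequality gives $F(x^k)\leq F(x^{k-1})\leq\cdots\leq F(x^0)=c_0$ for every $k\in\mathbb{N}$, so the entire sequence lies in $\lev(F,c_0)$. By Assumption~\ref{assumption2: F-level-bounded}, this level set is bounded, hence $\{x^k:k\in\mathbb{N}\}$ is bounded. (Note we will reuse the inclusion $\{x^k:k\in\mathbb{N}\}\subseteq\lev(F,c_0)$ below.)

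Next I would settle the arithmetic behind {\upshape(\romannumeral 1)}{\upshape(\romannumeral 2)}: the quantity $M=\sup\{g(x):x\in\lev(F,c_0)\}$ is finite and strictly positive. For finiteness, Assumption~\ref{assumption1: not 0 same time} lets me invoke Proposition~\ref{lemma:1225star} to conclude that $F$ is lower semicontinuous, so $\lev(F,c_0)$ is closed; together with Assumption~\ref{assumption2: F-level-bounded} it is therefore compact, and since $g:\mathbb{R}^n\to\mathbb{R}$ is convex and real-valued it is continuous, hence bounded on this compact set, so $M<+\infty$. For positivity, $\lev(F,c_0)\subseteq\dom(F)=\Omega\cap\dom(f)$, on which $g$ is positive by hypothesis, and $x^0$ belongs to this (nonempty) set, so $M\geq g(x^0)>0$. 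Combined with $\bar{\alpha}<1/L$, this yields $a=(1/\bar{\alpha}-L)/M>0$. Then {\upshape(\romannumeral 1)}{\upshape(\romannumeral 2)} follows: since $x^{k+1}\in\lev(F,c_0)$ we have $g(x^{k+1})\leq M$, and $\alpha_k\leq\bar{\alpha}$ gives $1/\alpha_k-L\geq 1/\bar{\alpha}-L>0$, so
\[
\frac{1/\alpha_k - L}{2g(x^{k+1})}\;\geq\;\frac{1/\bar{\alpha}-L}{2M}\;=\;\frac{a}{2},
\]
and substituting into the one-step decrease inequality gives $F(x^{k+1})+\frac{a}{2}\|x^{k+1}-x^k\|^2_2\leq F(x^k)$.

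The only genuinely delicate point — and the closest thing to an obstacle — is justifying $M<+\infty$: it is precisely here, and only here, that Assumption~\ref{assumption1: not 0 same time} (via lower semicontinuity of $F$ from Proposition~\ref{lemma:1225star}) is combined with level boundedness to upgrade ``bounded'' to ``compact'', which is what forces the continuous function $g$ to be bounded on the level set. The remaining manipulations are routine rearrangements of Theorem~\ref{3.1theorem1}.
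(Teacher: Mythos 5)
Your proof is correct and follows essentially the same route as the paper's: monotone decrease from Theorem~\ref{3.1theorem1}~{\upshape(\romannumeral 1)} places the iterates in $\lev(F,c_0)$ (giving boundedness via Assumption~\ref{assumption2: F-level-bounded}), and lower semicontinuity of $F$ from Proposition~\ref{lemma:1225star} makes that level set compact so that the continuous $g$ is bounded there by $M$, after which the coefficient bound $\frac{1/\alpha_k-L}{2g(x^{k+1})}\geq\frac{a}{2}$ yields Item~{\upshape(\romannumeral 2)}. Your explicit verification that $M>0$ is a small detail the paper leaves implicit, but otherwise the arguments coincide.
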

\begin{proof}
	By Theorem \ref{3.1theorem1} {\upshape(\romannumeral 1)}, we have for all $k\in\mathbb{N}$, $x^k\in\lev(F,c_0)$. Then the boundedness of $\{x^k:k\in\mathbb{N} \}$ follows immediately from Assumption \ref{assumption2: F-level-bounded}. In view of Proposition \ref{lemma:1225star}, Assumption \ref{assumption1: not 0 same time} ensures the lower semicontinuity of $F$. Hence, the set $\lev(F,c_0)$ is closed and bounded. Since $g$ is continuous, we know $M$ is finite. This together with Theorem \ref{3.1theorem1} {\upshape(\romannumeral 1)} and $\alpha_k < \bar{\alpha}$ yields Item {\upshape(\romannumeral 2)}.
\end{proof}

%\newline
\begin{lemma}\label{lemma:3.3}
	Let $\{x^k:k\in\mathbb{N}\}$ be generated by PGSA. Suppose Assumptions \ref{assumption1: not 0 same time}-\ref{assumption4: g-Ldifferentiable} hold. Then there exist $b>0$ and $\omega^{k+1}\in{\partial} F(x^{k+1})$ such that
	\begin{equation*}
	\|\omega^{k+1}\|_2\leq b\|x^{k+1}-x^k\|_2
	\end{equation*}
	for all $k\in\mathbb{N}$.	
\end{lemma}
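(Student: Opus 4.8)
The plan is to produce $\omega^{k+1}$ explicitly from the optimality condition of the proximal step, to recognize it as an element of $\widehat{\partial}F(x^{k+1})$ via the quotient rule recorded in the remark following Definition~\ref{Def:critical_point}, and then to bound its norm using the boundedness of $\{x^k:k\in\mathbb{N}\}$ together with Assumptions~\ref{assumption3: f-Lcontinuous} and~\ref{assumption4: g-Ldifferentiable}.

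\textbf{Constructing the subgradient.} Under Assumption~\ref{assumption4: g-Ldifferentiable}, $g$ is differentiable on $\Omega$, so $y^{k+1}=\triangledown g(x^k)$ and $x^{k+1}\in\mathrm{prox}_{\alpha_k f}\!\left(x^k-\alpha_k\triangledown h(x^k)+\alpha_k c_k\triangledown g(x^k)\right)$. Applying the sum rule for $\widehat{\partial}$ (the quadratic term is smooth) and the generalized Fermat's rule to the proximal subproblem yields
\[
\tfrac{1}{\alpha_k}(x^k-x^{k+1})-\triangledown h(x^k)+c_k\triangledown g(x^k)\in\widehat{\partial}f(x^{k+1}).
\]
Since $x^{k+1}\in\dom(F)$ by Lemma~\ref{lemma:3.1}, $c_{k+1}:=F(x^{k+1})$ is well defined, and by the quotient rule (the remark after Definition~\ref{Def:critical_point}) I would set
\[
\omega^{k+1}:=\frac{1}{g(x^{k+1})}\Bigl(\tfrac{1}{\alpha_k}(x^k-x^{k+1})+\triangledown h(x^{k+1})-\triangledown h(x^k)+c_k\triangledown g(x^k)-c_{k+1}\triangledown g(x^{k+1})\Bigr),
\]
which lies in $\widehat{\partial}F(x^{k+1})\subseteq{\partial}F(x^{k+1})$.

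\textbf{Uniform bounds and the estimate.} By Theorem~\ref{3.1theorem1}~{\upshape(\romannumeral 1)} and Lemma~\ref{lemma:3.2}~{\upshape(\romannumeral 1)}, all iterates lie in $K:=\lev(F,c_0)$, which by Proposition~\ref{lemma:1225star} and Assumption~\ref{assumption2: F-level-bounded} is a compact subset of $\dom(F)=\Omega\cap\dom(f)$; moreover $0\le c_k\le c_0$ for all $k$ by Theorem~\ref{3.1theorem1}. Since $g$ is continuous and positive on $\dom(f)\cap\Omega$ and $\triangledown g$ is continuous, $m:=\inf_{x\in K}g(x)>0$ and $G:=\sup_{x\in K}\|\triangledown g(x)\|_2<\infty$. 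Using Assumptions~\ref{assumption3: f-Lcontinuous}--\ref{assumption4: g-Ldifferentiable} and a compactness/covering argument, $f,g,h$ and $\triangledown g$ are Lipschitz continuous on $K$; combined with the $L$-Lipschitz continuity of $\triangledown h$ and $g\ge m>0$ on $K$, this gives a Lipschitz constant $L_F$ for $F=(f+h)/g$ on $K$. Then I would bound $\|\omega^{k+1}\|_2$ termwise, using $\alpha_k\ge\underline{\alpha}$, $\|\triangledown h(x^{k+1})-\triangledown h(x^k)\|_2\le L\|x^{k+1}-x^k\|_2$, the splitting $c_k\triangledown g(x^k)-c_{k+1}\triangledown g(x^{k+1})=c_k\bigl(\triangledown g(x^k)-\triangledown g(x^{k+1})\bigr)+(c_k-c_{k+1})\triangledown g(x^{k+1})$ with $|c_k-c_{k+1}|=|F(x^k)-F(x^{k+1})|\le L_F\|x^{k+1}-x^k\|_2$, and $g(x^{k+1})\ge m$, to obtain $\|\omega^{k+1}\|_2\le b\|x^{k+1}-x^k\|_2$ for a constant $b>0$ depending only on $\underline{\alpha},L,c_0,G,m$ and the Lipschitz constants on $K$; in particular $b$ is independent of $k$.

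\textbf{Main obstacle.} The delicate point is not the termwise estimate but the passage to \emph{uniform} constants valid for every $k$: promoting the \emph{local} Lipschitz hypotheses of Assumptions~\ref{assumption3: f-Lcontinuous}--\ref{assumption4: g-Ldifferentiable} to global constants on $K$. This is exactly where one uses that $\lev(F,c_0)$ is a compact subset of $\dom(F)$ (hence Assumptions~\ref{assumption1: not 0 same time} and~\ref{assumption2: F-level-bounded}, the former via the lower semicontinuity of $F$ from Proposition~\ref{lemma:1225star}), together with a standard covering argument; a little care is needed because $\dom(f)$ need not be open. Once the uniform Lipschitz constants and the lower bound $m>0$ are in hand, the remaining computation is routine.
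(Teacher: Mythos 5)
Your proposal is correct and follows essentially the same route as the paper: the same $\omega^{k+1}$ built from the proximal optimality condition and the quotient rule of Proposition~\ref{ppsition:2.2-1}, the same splitting $c_k\triangledown g(x^k)-c_{k+1}\triangledown g(x^{k+1})=c_k(\triangledown g(x^k)-\triangledown g(x^{k+1}))+(c_k-c_{k+1})\triangledown g(x^{k+1})$, and the same use of boundedness of the iterates to extract a uniform lower bound on $g$ and uniform Lipschitz constants for $\triangledown g$ and $F$ on a compact subset of $\dom(F)$. The only cosmetic difference is that the paper obtains $g(x^k)\geq t$ via Theorem~\ref{3.2theorem1} (accumulation points have $g>0$) rather than via the infimum of $g$ over $\lev(F,c_0)$, which changes nothing.
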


\begin{proof}
	By Lemma \ref{lemma:3.1} and Theorem \ref{3.2theorem1}, we know $x^k \in \Omega$ for any $k \in \mathbb{N}$ and any accumulation point $x^\star$ of $\{x^k:k\in\mathbb{N}\}$ satisfies $g(x^\star) >0$. Thus, there exists $t>0$ such that $g(x^k) \geq t$, since $\{x^k:k\in\mathbb{N}\}$ is bounded and $g$ is continuous on $\Omega$. Let $S\subseteq \mathbb{R}^n$ be a bounded closed subset satisfying $\{x^k:k\in\mathbb{N}\}\subseteq S\subseteq \mathrm{dom}(F)$. Then it is easy to check that $\triangledown g$ and $F$ are globally Lipschitz continuous on $S$. We denote the Lipschitz constant of $\triangledown g$ and $F$ by $\hat{L}$ and $\widetilde{L}$ respectively.
	
	From the iteration of PGSA and the differentiability of $g$, we obtain that
	\begin{equation*}
	x^k-x^{k+1}-\alpha_k \triangledown h(x^k) + \alpha_k c_k \triangledown g(x^k) \in \alpha_k \widehat{\partial} f(x^{k+1}),
	\end{equation*}
	which implies that
	\begin{equation}\label{formula:3.3lemma1}
	\frac{1}{\alpha_k g(x^{k+1})}(x^k-x^{k+1}) - \frac{\triangledown h(x^k)}{g(x^{k+1})} + \frac{c_k}{g(x^{k+1})} \triangledown g(x^k) \in \frac{\widehat{\partial} f(x^{k+1})}{g(x^{k+1})}.
	\end{equation}
    From Assumptions \ref{assumption3: f-Lcontinuous}-\ref{assumption4: g-Ldifferentiable} and Proposition \ref{ppsition:2.2-1}, we have on $\mathrm{dom}(\widehat{\partial} F)$
	\begin{equation*}
	\widehat{\partial} F = \frac{g(\widehat{\partial} f + \triangledown h) - (f+h)\triangledown g}{g^2}.
	\end{equation*}
	The above relation and \eqref{formula:3.3lemma1} suggest that $\omega^{k+1} \in \widehat{\partial} F(x^{k+1})$ with
	\begin{align*}
	\omega^{k+1} := &\frac{1}{\alpha_k g(x^{k+1})}(x^k-x^{k+1}) - \frac{\triangledown h(x^k)}{g(x^{k+1})} + \frac{\triangledown h(x^{k+1})}{g(x^{k+1})} \\
	&+ \frac{c_k}{g(x^{k+1})} \triangledown g(x^k) - \frac{c_{k+1}}{g(x^{k+1})} \triangledown g(x^{k+1}).
	\end{align*}
	By a direct computation, it follows that
	\begin{equation}\label{formula:3.3lemma2}
	\|\omega^{k+1}\|_2 \leq \left(\frac{1}{\alpha_k t} +\frac{L}{t} + \frac{c_k\hat{L}}{t} + \frac{\|\triangledown g(x^{k+1})\|_2\widetilde{L}}{t} \right) \|x^{k+1}-x^k\|_2.
	\end{equation}
	From Theorem \ref{3.1theorem1}, we see that $c_k \leq c_0$ for $k \in \mathbb{N}$. Since $\{x^k:k\in\mathbb{N}\}$ is bounded and $\triangledown g$ is continuous on $\Omega$, there exists $\beta > 0$ such that $\|\triangledown g(x^{k+1})\|_2 \leq \beta$ for all $k\in\mathbb{N}$. Due to $\alpha_k \geq \underline{\alpha} > 0$, we obtain finally from \eqref{formula:3.3lemma2} that $\|\omega^{k+1}\|_2 \leq b \|x^{k+1}-x^k\|_2$ for all $k\in\mathbb{N}$, where $b:=(1/\underline{\alpha}+L+c_0\hat{L}+\beta\widetilde{L})/a$. We complete the proof due to $\widehat{\partial} F(x^{k+1})\subseteq {\partial} F(x^{k+1})$.
\end{proof}

Now we are ready to present the main result of this subsection.
%\newline
\begin{theorem}\label{theorem:4.3.1-1}	
	Suppose that Assumptions \ref{assumption1: not 0 same time}-\ref{assumption4: g-Ldifferentiable} hold and $F$ satisfies the KL property at any point in $\dom(F)$. Let $\{x^k:k\in\mathbb{N}\}$ be generated by PGSA. Then $\sum_{k=1}^{\infty}\|x^k-x^{k-1}\|_2<+\infty$ and $\{x^k:k\in\mathbb{N}\}$ converges to a critical point of $F$.
\end{theorem}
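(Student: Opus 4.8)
The plan is to invoke Proposition~\ref{3.3ppsition} with $\varphi = F$, so the work reduces to verifying its three hypotheses — the sufficient decrease condition, the relative error condition, and the continuity condition — for the sequence $\{x^k:k\in\mathbb{N}\}$ generated by PGSA, and then checking that $F$ satisfies the KL property at the relevant limit point. Most of this has already been assembled in the preceding lemmas, so the proof is essentially a bookkeeping exercise gluing them together.

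First I would record that Assumptions~\ref{assumption1: not 0 same time} and~\ref{assumption2: F-level-bounded} are in force, so Lemma~\ref{lemma:3.2} applies: it gives at once that $\{x^k:k\in\mathbb{N}\}$ is bounded and that the sufficient decrease condition (Proposition~\ref{3.3ppsition}(i)) holds with constant $a/2 > 0$ where $a = (1/\bar\alpha - L)/M$. Next, with Assumptions~\ref{assumption3: f-Lcontinuous} and~\ref{assumption4: g-Ldifferentiable} additionally available, Lemma~\ref{lemma:3.3} supplies $\omega^{k+1}\in\partial F(x^{k+1})$ with $\|\omega^{k+1}\|_2 \leq b\|x^{k+1}-x^k\|_2$, which is exactly the relative error condition (Proposition~\ref{3.3ppsition}(ii)). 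For the continuity condition (Proposition~\ref{3.3ppsition}(iii)): since $\{x^k:k\in\mathbb{N}\}$ is bounded, it has a convergent subsequence $x^{k_j}\to x^\star$; by Theorem~\ref{3.2theorem1} (whose hypothesis Assumption~\ref{assumption1: not 0 same time} holds), $x^\star$ is a critical point of $F$, and in particular $x^\star\in\dom(F)$. It remains to check $F(x^{k_j})\to F(x^\star)$. From Theorem~\ref{3.1theorem1}(ii) the whole sequence $F(x^k)$ converges to some $c\geq 0$, so $F(x^{k_j})\to c$; on the other hand, lower semicontinuity of $F$ (Proposition~\ref{lemma:1225star}) gives $F(x^\star)\leq c$, while the monotonicity and continuity of $F$ along the subsequence — using that $x^{k_j}\to x^\star$, $g(x^{k_j})\to g(x^\star)>0$, $h$ continuous and $f$ continuous on its domain (indeed $f$ is locally Lipschitz on $\dom(f)$ by Assumption~\ref{assumption3: f-Lcontinuous}, with $x^{k_j},x^\star\in\dom(f)$) — yields $F(x^{k_j})\to F(x^\star)$, hence $F(x^\star)=c$.

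With all three conditions of Proposition~\ref{3.3ppsition} verified and $F$ assumed to satisfy the KL property at $x^\star\in\dom(F)$, the proposition delivers $\sum_{k=1}^{\infty}\|x^k-x^{k-1}\|_2 < +\infty$, $\lim_{k\to\infty}x^k = x^\star$, and $0\in\partial F(x^\star)$. The summability of the increments is the first claimed conclusion. For the second, convergence of the whole sequence to $x^\star$ combined with Theorem~\ref{3.2theorem1} (applied now to the limit point $x^\star$, which is an accumulation point) gives that $x^\star$ is a critical point of $F$ in the sense of Definition~\ref{Def:critical_point}; note that this is slightly stronger than the bare conclusion $0\in\partial F(x^\star)$ coming out of Proposition~\ref{3.3ppsition}, and one should cite Theorem~\ref{3.2theorem1} explicitly rather than relying on the KL machinery alone.

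The only genuinely non-routine point is the continuity condition, specifically showing $F(x^{k_j})\to F(x^\star)$: lower semicontinuity alone gives one inequality, and one needs the reverse via the explicit form of $F$ and the continuity of each constituent ($f$ on $\dom(f)$, $h$, $g$) together with $g(x^\star)>0$; here the argument essentially reruns the limiting step already performed inside the proof of Theorem~\ref{3.2theorem1}, so no new idea is required. Everything else is a direct citation of Lemmas~\ref{lemma:3.2} and~\ref{lemma:3.3} and Theorems~\ref{3.1theorem1} and~\ref{3.2theorem1}.
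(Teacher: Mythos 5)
Your proposal is correct and follows essentially the same route as the paper: invoke Proposition~\ref{3.3ppsition} with the sufficient decrease condition from Lemma~\ref{lemma:3.2}, the relative error condition from Lemma~\ref{lemma:3.3}, lower semicontinuity from Proposition~\ref{lemma:1225star}, and Theorem~\ref{3.2theorem1} to identify the limit as a critical point. The paper's own proof is just a terse citation of these ingredients; your elaboration of the continuity condition (showing $F(x^{k_j})\to F(x^\star)$ via the continuity of $f$ on its domain, of $h$, and of $g$ with $g(x^\star)>0$) fills in a step the paper leaves implicit, and it is the correct way to do so.
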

\begin{proof}
	From Theorem \ref{3.2theorem1}, it suffices to prove that $\sum_{k=1}^{\infty}\|x^k-x^{k-1}\|_2 < +\infty$ and $\{x^k:k\in\mathbb{N}\}$ is convergent. According to Proposition \ref{3.3ppsition}, we obtain this theorem from Lemmas \ref{lemma:3.2}-\ref{lemma:3.3} and Proposition \ref{lemma:1225star} immediately.
\end{proof}

%==================================4.4:Convergence rate analysis======================================
%==================================4.4:Convergence rate analysis======================================

\subsection{Convergence rate}\label{ssection3.4:Convergence_rate_analysis}
Finally, we consider the convergence rate of PGSA. To this end, we further assume $F$ is a KL function with the corresponding $\phi$ (see Definition \ref{Def:KL_property}) taking the form $\phi(s) = ds^{1-\theta}$ for some $d>0$ and $\theta\in[0,1)$. Then under the assumption of Theorem \ref{theorem:4.3.1-1}, we can estimate the convergence rate of PGSA, following a similar line of arguments to other convergence rate analysis based on the KL property; see, for example, \cite{Attouch-Bolte:MP2009, Wen-Chen-Pong:2018COA,Xu-Yin:SIAMIS:13}.
%\newline
\begin{theorem}\label{theorem:4.3.1-2}
	Suppose that Assumptions \ref{assumption1: not 0 same time}-\ref{assumption4: g-Ldifferentiable} hold. Let $\{x^k:k\in\mathbb{N}\}$ be generated by PGSA and suppose that $\{x^k:k\in\mathbb{N}\}$ converges to $x^{\star}$. Assume further that $F$ satisfies the KL property at $x^{\star}$ with $\phi(s) = ds^{1-\theta}$ for some $d>0$ and $\theta\in[0,1)$, then the following statements hold:
	\begin{enumerate}[{\upshape(\romannumeral 1)}]
		\item If $\theta = 0$, $\{x^k:k\in\mathbb{N} \}$ converges to $x^\star$ finitely;
		\item If $\theta \in (0,1/2]$, $\|x^k-x^\star\|_2\leq c_1 \tau^k$, $\forall k\geq K_1$ for some $K_1>0$, $c_1>0$, $\tau\in(0,1)$;
		\item If $\theta \in (1/2,1)$, $\|x^k-x^\star\|_2\leq c_2 k^{-(1-\theta)/(2\theta -1)}$, $\forall k\geq K_2$, for some $k\geq K_2$, $c_2>0$.
	\end{enumerate}	
\end{theorem}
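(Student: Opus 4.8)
The plan is to follow the now-standard KL-based convergence-rate machinery (as in \cite{Attouch-Bolte:MP2009, Wen-Chen-Pong:2018COA}), applied to the sequence $\{x^k:k\in\mathbb{N}\}$ with the function $F$. The three ingredients we already have in hand are: the sufficient-decrease estimate $F(x^{k+1}) + \tfrac{a}{2}\|x^{k+1}-x^k\|_2^2 \leq F(x^k)$ from Lemma \ref{lemma:3.2}\,{\upshape(\romannumeral 2)}; the relative-error estimate $\dist(0,{\partial}F(x^{k+1})) \leq \|\omega^{k+1}\|_2 \leq b\|x^{k+1}-x^k\|_2$ from Lemma \ref{lemma:3.3}; and the fact that, by Theorem \ref{theorem:4.3.1-1}, the whole sequence converges to $x^\star$ with $F(x^k)\downarrow F(x^\star)$ and $0\in{\partial}F(x^\star)$. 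Write $r_k := F(x^k) - F(x^\star) \geq 0$ and $\Delta_k := \|x^{k+1}-x^k\|_2$; note $\sum_k \Delta_k < +\infty$, so in particular $\|x^k - x^\star\|_2 \leq \sum_{i\geq k}\Delta_i =: S_k$.

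First I would handle the case $\theta = 0$. Here $\phi(s) = ds$, so the KL inequality near $x^\star$ reads $d\cdot\dist(0,{\partial}F(x^k)) \geq 1$ whenever $F(x^\star) < F(x^k) < F(x^\star)+\eta$; combining with Lemma \ref{lemma:3.3} gives $\Delta_{k-1} \geq 1/(bd) > 0$, which contradicts $\Delta_k \to 0$ unless the iterates reach a point with $F(x^k) = F(x^\star)$ in finitely many steps — and once that happens, the sufficient-decrease inequality forces $\Delta_k = 0$ thereafter, i.e.\ finite convergence. For $\theta\in(0,1)$, the core step is to derive a recursion for the tail lengths $S_k$. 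Using concavity of $\phi$ together with the sufficient-decrease and relative-error inequalities in the usual way, one obtains for all large $k$ an estimate of the form $\Delta_k \leq C\big(\phi(r_k) - \phi(r_{k+1})\big) + \tfrac{1}{2}\Delta_{k-1}$ — the standard ``$2ab \leq a^2+b^2$'' trick applied to $\Delta_k^2 \leq (b/a)\,\Delta_{k-1}\cdot\big[\text{(difference of }\phi\text{ values)}\big]$ after invoking the KL inequality $\phi'(r_{k})\,\dist(0,{\partial}F(x^{k}))\geq 1$. Summing from $k$ to $\infty$ and absorbing the $\tfrac12\Delta_{k-1}$ term yields $S_k \leq C'\,\phi(r_k) + C''\Delta_{k-1} \leq C'''\,\phi(r_k) + C''(S_{k-1}-S_k)$, hence $S_k \lesssim \phi(r_k) = d\,r_k^{1-\theta}$ up to lower-order terms.

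The remaining step is to convert the bound $S_k \lesssim r_k^{1-\theta}$ into the claimed rates by comparing it with the decrease of $r_k$ itself. From $r_k - r_{k+1} = F(x^k)-F(x^{k+1}) \geq \tfrac{a}{2}\Delta_k^2$ and the relative-error/KL combination $1 \leq \phi'(r_k)\,b\,\Delta_{k-1}$, i.e.\ $\Delta_{k-1} \geq \tfrac{1}{b}\,\phi'(r_k)^{-1} = \tfrac{1}{bd(1-\theta)}\,r_k^{\theta}$, one gets $r_{k-1} - r_k \gtrsim r_k^{2\theta}$. This is the familiar scalar recursion whose solution dichotomizes at $\theta = 1/2$: for $\theta\in(0,1/2]$ one has $2\theta\leq 1$ and $r_k$ decays at least geometrically, so $S_k \lesssim r_k^{1-\theta}$ also decays geometrically, giving (ii) with a suitable $\tau\in(0,1)$; for $\theta\in(1/2,1)$ one has $2\theta > 1$ and the recursion $r_{k-1}-r_k \gtrsim r_k^{2\theta}$ yields $r_k = O(k^{-1/(2\theta-1)})$, hence $S_k \lesssim r_k^{1-\theta} = O(k^{-(1-\theta)/(2\theta-1)})$, which is (iii) since $\|x^k-x^\star\|_2 \leq S_k$.

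The main obstacle is not any single estimate but the careful bookkeeping needed to make the tail-sum recursion $S_k \lesssim \phi(r_k) + (S_{k-1}-S_k)$ rigorous: one must restrict to $k$ large enough that $x^k$ lies in the KL neighborhood $O$ of $x^\star$ and that $F(x^\star) < F(x^k) < F(x^\star)+\eta$ (handling separately, and trivially, the case where $F(x^k) = F(x^\star)$ for some $k$, where finite termination occurs), keep track of the constants $a$ from Lemma \ref{lemma:3.2} and $b$ from Lemma \ref{lemma:3.3}, and verify that the ``$\tfrac12\Delta_{k-1}$'' term can indeed be absorbed after summation. Once the recursion $S_k \leq c\,r_k^{1-\theta} + c'(S_{k-1}-S_k)$ and the scalar recursion $r_{k-1}-r_k \geq c''\,r_k^{2\theta}$ are in place, items (i)–(iii) follow by the elementary lemmas on such recursions that are by now standard in the literature.
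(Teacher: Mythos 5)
Your proposal is correct and follows exactly the standard KL-based rate analysis (sufficient decrease from Lemma \ref{lemma:3.2}, relative error from Lemma \ref{lemma:3.3}, the concavity/AM--GM tail-sum recursion, and the scalar recursion $r_{k-1}-r_k\gtrsim r_k^{2\theta}$) that the paper itself omits and defers to \cite[Theorem 2]{Attouch-Bolte:MP2009}. No gaps: the $\theta=0$ contradiction argument, the absorption of the $\tfrac12\Delta_{k-1}$ term, and the dichotomy at $\theta=1/2$ are all handled as in that reference.
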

Here we omit the proof for Theorem \ref{theorem:4.3.1-2}, since it can be performed very similarly to those for other optimization algorithms (see, for example, the proof of \cite[Theorem 2]{Attouch-Bolte:MP2009}).  We remark that as is pointed out in \cite{Attouch-bolt-redont-soubeyran:2010}, all proper semialgebraic functions satisfy the KL property with $\phi(s) = ds^{1-\theta}$ for some $d>0$ and $\theta\in[0,1)$. Consequently, both Theorems \ref{theorem:4.3.1-1} and \ref{theorem:4.3.1-2} are applicable when $F$ is a semialgebraic function. Indeed, the objective functions are semialgebraic in a wide range of sparse optimization problems, including the sparse generalized eigenvalue problem \eqref{117E4} which will be studied in detail in Section \ref{section:sparse_generalized_eigenvalue}.

To close this section, we point out that when $f$ is convex, the following inequality instead of \eqref{formula:3.1lemma2} will be obtained in Lemma \ref{lemma:3.1}:
\begin{equation*}
f(x^{k+1}) + h(x^{k+1}) + \left( \frac{1}{\alpha_k}-\frac{L}{2} \right) \|x^{k+1}-x^k\|^2_2\leq c_kg(x^{k+1}).
\end{equation*}
As a consequence, one can easily verify that all the convergence results established in subsections \ref{ssection3.2:Subsequential_convergence}-\ref{ssection3.4:Convergence_rate_analysis} still hold for PGSA with $0<\underline{\alpha}\leq\alpha_k\leq\bar{\alpha}<2/L$ in the case where $f$ is convex.

\section{PGSA with line search}\label{section5:PGSA_L}
In this section, we incorporate a line search scheme for adaptively choosing $\alpha_k$ into PGSA. In PGSA, the step size $\alpha_k$ should be less than $1/L$ for all $k\in\mathbb{N}$ to ensure the convergence. However, this step size may be too small in the case of large $L$ and thus leads to slow convergence of PGSA. To speed up the convergence, we take advantage of the line search technique in \cite{ Lu-Zhou:SIAM2019,Gotoh-Takeda-Tono:2017arXiv, Wright-Nowak-Figueiredo:IEEE-TSP2009} to enlarge the step size and meanwhile guarantee the convergence of the algorithm. The PGSA with line search is summarized in Algorithm \ref{alg:PGSAL} (PGSA\_L).
\begin{algorithm}
	\caption{PGSA with line search (PGSA\_L) for problem \eqref{problem:root}} \label{alg:PGSAL}
	\begin{tabular}{ll}
		Step $0$.  &Input $x^0\in\dom(F)$, $a>0$, $0<\underline{\alpha}< \bar{\alpha}$, $0<\eta<1$, and an integer $N\geq 0$. \\
				   &Set $k\leftarrow 0$.\\
		Step $1$.  &$y^{k+1}\in{\partial} g(x^k)$,\\
		&$c_k=\frac{f(x^k)+h(x^k)}{g(x^k)}$,\\
		&Choose $\alpha_{k,0}\in[\underline{\alpha},\bar{\alpha}]  $.\\
		Step $2$. &For $m=0,1,\dots,$ do\\
		&\qquad $\alpha_k = \alpha_{k,0}\eta^m$,\\
		&\qquad $\tilde{x}^{k+1}\in\mathrm{prox}_{\alpha_k f}(x^k-\alpha_k\triangledown h(x^k) + \alpha_k c_k y^{k+1})$,\\
		&\qquad If $\tilde{x}^{k+1}$ satisfies $\tilde{x}^{k+1}\in\dom(F)$ and
	\end{tabular}
	\begin{equation}\label{118E2}
	F(\tilde{x}^{k+1})\leq\max\limits_{[k-N]_+\leq j\leq k}c_j-\frac{a}{2}\|\tilde{x}^{k+1}-x^k\|^2_2,
	\end{equation}
	\begin{tabular}{ll}
		&\qquad set $x^{k+1}=\tilde{x}^{k+1}$ and go to Step 3.\\
		Step $3$.  &$k\leftarrow k+1$ and go to Step 1.
	\end{tabular}
\end{algorithm}

From inequality \eqref{118E2}, $\{F(x^k):k\in\mathbb{N}\}$ is monotone when $N=0$, while it is generally nonmonotone when $N>0$. For convenience of presentation, we call the algorithm PGSA with monotone line search (PGSA\_ML) if $N=0$ and PGSA with nonmonotone line search (PGSA\_NL) if $N>0$. Let $\Delta x:=x^k-x^{k-1}$, $\Delta h := \triangledown h(x^k) - \triangledown h(x^{k-1})$. Motivated from \cite{Barzilai-Borwein:IJNA1998, Lu-Zhou:SIAM2019, Wright-Nowak-Figueiredo:IEEE-TSP2009}, we adopt a very popular choice of $\alpha_{k,0}$ in the following formula
\begin{equation}\label{120E1}
\alpha_{k,0} =
\begin{cases}
\max\left\{ \underline{\alpha},\min\{\bar{\alpha},\frac{\|\Delta x\|^2_2}{|\innerP{\Delta x}{\Delta h}|} \} \right\}, & \text{if }\innerP{\Delta x}{\Delta h}\neq 0,\\
\bar{\alpha},& \text{else.}
\end{cases}
\end{equation}
This initial step size can be viewed as an adaptive approximation of $1/L$ via some local curvature information of $h$.

Next, we study the convergence property of PGSA\_L. To this end, we define $\tau:\mathbb{N}\to\mathbb{N}$ at $k\in\mathbb{N}$ as $\tau(k) := \max\{i:i\in\arg\max\{F(x^j):[k-N]_+\leq j\leq k \} \}$. The following lemma tells that PGSA\_L is well defined and the sequence generated by PGSA\_L is bounded under Assumption 2.
\begin{lemma}\label{131lemma5.4}
	Suppose that Assumption 2 holds and let $M:=\sup \{g(x):x\in \lev(F,c_0)  \}$. Then, the following statements hold:
	\begin{enumerate}[{\upshape(\romannumeral 1)}]
		\item Step 2 of PGSA\_L terminates at some $\alpha_k\geq\tilde{\alpha}$ in most $T$ iterations, where $\tilde{\alpha} := \eta/(aM+L)$, $T:=\lceil \frac{-\log(\bar{\alpha}(aM+L))}{\log \eta} +1 \rceil$;
		\item $x^k\in\lev(F,c_0)$ for all $k\in\mathbb{N}$ and thus $\{x^k:k\in\mathbb{N}\}$ is bounded;
		\item $\{F(x^{\tau(k)}):k\in\mathbb{N} \}$ is nonincreasing.
	\end{enumerate}
\end{lemma}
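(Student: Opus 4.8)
The plan is to prove the three items in order, since each relies on the previous one. For item (i), I would first show that if $\alpha_k$ is small enough, the line search condition \eqref{118E2} must be satisfied, so the inner loop terminates. Following the argument in Lemma \ref{lemma:3.1}, whenever $\tilde{x}^{k+1}\in\mathrm{prox}_{\alpha_k f}(x^k-\alpha_k\triangledown h(x^k)+\alpha_kc_ky^{k+1})$ with $y^{k+1}\in{\partial} g(x^k)$, one obtains (summing the proximal inequality, the descent lemma for $h$, and the subgradient inequality for $g$) that
\begin{equation*}
f(\tilde{x}^{k+1})+h(\tilde{x}^{k+1})+\Bigl(\frac{1}{\alpha_k}-\frac{L}{2}\Bigr)\|\tilde{x}^{k+1}-x^k\|_2^2\leq c_kg(\tilde{x}^{k+1}).
\end{equation*}
As in the proof of Lemma \ref{lemma:3.1}, the non-negativity of $f+h$ together with $\alpha_k<2/L$ forces $\tilde{x}^{k+1}\in\dom(F)$ (otherwise $g(\tilde{x}^{k+1})=0$ and $\tilde{x}^{k+1}=x^k$, a contradiction), so dividing by $g(\tilde{x}^{k+1})>0$ gives $F(\tilde{x}^{k+1})\leq c_k-\frac{1/\alpha_k-L/2}{g(\tilde{x}^{k+1})}\|\tilde{x}^{k+1}-x^k\|_2^2$. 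The key point is to bound $g(\tilde{x}^{k+1})\leq M$; this is legitimate once I know $\tilde{x}^{k+1}\in\lev(F,c_0)$, which holds because $F(\tilde{x}^{k+1})\leq c_k=F(x^k)\leq\cdots\leq c_0$ using the (inductively assumed) fact that $x^j\in\lev(F,c_0)$ for $j\leq k$. Then $F(\tilde{x}^{k+1})\leq c_k-\frac{1/\alpha_k-L/2}{M}\|\tilde{x}^{k+1}-x^k\|_2^2$, and one checks that $\frac{1/\alpha_k-L/2}{M}\geq\frac{a}{2}$ is implied by $\alpha_k\leq\frac{1}{aM/2+L/2}=\frac{2}{aM+L}$; a slightly looser sufficient condition $\alpha_k\leq\frac{1}{aM+L}$ (which also handles the $\max$ over $j$ trivially since $c_k\leq\max_{[k-N]_+\leq j\leq k}c_j$) certainly works. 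Since the inner loop sets $\alpha_k=\alpha_{k,0}\eta^m$ with $\alpha_{k,0}\leq\bar{\alpha}$, after at most $T=\lceil\frac{-\log(\bar{\alpha}(aM+L))}{\log\eta}+1\rceil$ halvings we have $\alpha_k\leq\frac{1}{aM+L}$ and also $\alpha_k\geq\eta\cdot\frac{1}{aM+L}=\tilde{\alpha}$ (because at the previous value $\alpha_k/\eta$ the test could still fail), giving the claimed bound.

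For item (ii), I would run the induction that was already used implicitly above: $x^0\in\dom(F)$ with $F(x^0)=c_0$, so $x^0\in\lev(F,c_0)$; and if $x^j\in\lev(F,c_0)$ for all $j\leq k$, then $F(x^{k+1})\leq\max_{[k-N]_+\leq j\leq k}c_j-\frac{a}{2}\|x^{k+1}-x^k\|_2^2\leq\max_{[k-N]_+\leq j\leq k}F(x^j)\leq c_0$, so $x^{k+1}\in\lev(F,c_0)$. Boundedness then follows from Assumption \ref{assumption2: F-level-bounded} (level boundedness of $F$), and finiteness of $M$ follows from continuity of $g$ on the closed bounded set $\lev(F,c_0)$ — closedness itself coming from lower semicontinuity of $F$, which under Assumption \ref{assumption1: not 0 same time} is Proposition \ref{lemma:1225star}, but here I only invoke Assumption \ref{assumption2: F-level-bounded} and note that level boundedness already makes $\lev(F,c_0)$ bounded; for closedness I can either assume lower semicontinuity is available or note $F$ is lsc on $\Omega$ directly, which suffices since the iterates lie in $\Omega$.

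For item (iii), this is the standard nonmonotone–line–search bookkeeping argument (as in Grippo–Lampariello–Lucidi / Wright–Nowak–Figueiredo). Write $\tau(k)$ as in the statement, so $F(x^{\tau(k)})=\max_{[k-N]_+\leq j\leq k}F(x^j)=\max_{[k-N]_+\leq j\leq k}c_j$. From \eqref{118E2} at iteration $k$, $F(x^{k+1})\leq F(x^{\tau(k)})-\frac{a}{2}\|x^{k+1}-x^k\|_2^2\leq F(x^{\tau(k)})$. Now $\{F(x^j):[k+1-N]_+\leq j\leq k+1\}$ is contained in $\{F(x^j):[k-N]_+\leq j\leq k\}\cup\{F(x^{k+1})\}$, and every element of the first set is $\leq F(x^{\tau(k)})$ while $F(x^{k+1})\leq F(x^{\tau(k)})$ as well; hence $F(x^{\tau(k+1)})=\max_{[k+1-N]_+\leq j\leq k+1}F(x^j)\leq F(x^{\tau(k)})$, proving $\{F(x^{\tau(k)}):k\in\mathbb{N}\}$ is nonincreasing. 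I expect item (i) — in particular pinning down the precise constants $\tilde{\alpha}$ and $T$ and making sure the chain $g(\tilde{x}^{k+1})\leq M$ is justified before $x^{k+1}$ is even accepted — to be the main obstacle; items (ii) and (iii) are then routine inductions.
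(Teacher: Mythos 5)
Your proposal follows essentially the same route as the paper's proof: a single induction on $k$ in which one shows that $\alpha_k\leq 1/(aM+L)$ forces the line search test to pass (using the descent estimate of Lemma \ref{lemma:3.1}, first concluding $F(\tilde{x}^{k+1})\leq c_k\leq c_0$ so that $\tilde{x}^{k+1}\in\lev(F,c_0)$ and hence $g(\tilde{x}^{k+1})\leq M$, then sharpening to \eqref{131Estar}), followed by the standard nonmonotone bookkeeping for items (ii) and (iii). The one genuine error is the constant in your displayed descent inequality: Lemma \ref{lemma:3.1} gives $\frac{1/\alpha_k-L}{2}=\frac{1}{2\alpha_k}-\frac{L}{2}$, not $\frac{1}{\alpha_k}-\frac{L}{2}$; the latter holds only when $f$ is convex (as the paper remarks at the end of Section \ref{section:PGSA}), whereas here $f$ is a general nonconvex function. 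Correspondingly, your claim that $\alpha_k<2/L$ forces $\tilde{x}^{k+1}\in\dom(F)$ should read $\alpha_k<1/L$. Fortunately this does not derail the argument: the ``looser'' threshold $\alpha_k\leq\frac{1}{aM+L}$ that you actually fall back on is exactly the one required by the correct inequality, since $1/\alpha_k-L\geq aM$ yields $\frac{(1/\alpha_k-L)/2}{g(\tilde{x}^{k+1})}\geq\frac{aM}{2g(\tilde{x}^{k+1})}\geq\frac{a}{2}$ once $g(\tilde{x}^{k+1})\leq M$, and it lies below $1/L$; so the constants $\tilde{\alpha}$ and $T$ and all three items go through unchanged once the constant is repaired. (A small side remark: finiteness of $M$ needs only boundedness of $\lev(F,c_0)$ together with continuity of $g$ on all of $\mathbb{R}^n$, since a real-valued convex function is bounded on bounded sets; closedness of the level set is not required for this particular point, so your hedging there is unnecessary.)
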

\begin{proof}
	Assumption 2 ensures the boundedness of $\lev(F,c_0)$. Thus, we know $M$ is finite thanks to the continuity of $g$. In view of the updating rule for $\alpha_k$ in Step 2 and $\alpha_{k,0} \leq \bar{\alpha}$, after $T$ iterations, we have $\alpha_k \leq 1/(aM+L) = \tilde{\alpha}/\eta$ for any $k\in\mathbb{N}$.
	
	We proceed by induction on $k$. It is obvious that $x^0\in\lev(F,c_0)$. Now, assume that for $j=0,1,...,k$, $x^j$ has already been generated and $x^j\in\lev(F,c_0)$. In order to prove Item {\upshape(\romannumeral 1)}, it suffices to show that if $\alpha_k\leq\tilde{\alpha}/\eta$, then $\tilde{x}^{k+1}\in\dom(F)$ and the following inequality holds
	\begin{equation}\label{131Estar}
	F(\tilde{x}^{k+1})\leq c_k - \frac{a}{2}\|\tilde{x}^{k+1}-x^k\|^2_2.
	\end{equation}
	By Theorem \ref{3.1theorem1} and $\alpha_k\leq 1/(aM+L)<1/L$, we have $\tilde{x}^{k+1}\in\dom(F)$ and	
	\begin{equation}\label{131Etriangle}
	F(\tilde{x}^{k+1})\leq c_k-\frac{1/\alpha_k - L}{2g(\tilde{x}^{k+1})}\|\tilde{x}^{k+1}-x^k\|^2_2
	\leq c_k - \frac{aM}{2g(\tilde{x}^{k+1})}\|\tilde{x}^{k+1}-x^k\|^2_2,
	\end{equation}
	which indicates that $F(\tilde{x}^{k+1})\leq c_k\leq c_0$ and thus $\tilde{x}^{k+1}\in \lev(F,c_0)$. Invoking $g(\tilde{x}^{k+1})\leq M$, we obtain inequality \eqref{131Estar} from \eqref{131Etriangle}.
	
	We next prove $x^{k+1}\in \lev(F,c_0)$ and $F(x^{\tau(j+1)})\leq F(x^{\tau(j)})$ for $j\leq k$. By \eqref{118E2}, we have $F(x^{j+1})\leq F(x^{\tau(j)})$ for $j\leq k$. Thus, for $j\leq k$,
	\begin{align*}
	F(x^{\tau(j+1)}) &= \max\limits_{[j+1-N]_+\leq i\leq j+1} F(x^i)\\
	&= \max\left\{F(x^{j+1}),\max\limits_{[j+1-N]_+\leq i\leq j} F(x^i)  \right\}\\
	&\leq \max\{F(x^{j+1}),F(x^{\tau(j)})\}\\
	&= F(x^{\tau(j)}).
	\end{align*}
	This yields that $F(x^{k+1})\leq F(x^{\tau(k)}) \leq F(x^{\tau(0)}) = c_0$. We complete the proof immediately.
\end{proof}

With the help of Lemma \ref{131lemma5.4}, we establish the subsequential convergence results of PGSA\_L in the next theorem.
\begin{theorem}\label{theorem:118triangle}
	Suppose that Assumptions 1 and 2 hold. Let $\{x^k:k\in\mathbb{N}\}$ be generated by PGSA\_L. Then any accumulation point of $\{x^k:k\in\mathbb{N}\}$ is a critical point of $F$.
\end{theorem}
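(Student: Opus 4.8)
The plan is to mimic the structure of the proof of Theorem \ref{3.2theorem1}, adapting it to the nonmonotone line-search setting. Let $\{x^{k_j}:j\in\mathbb{N}\}$ be a subsequence converging to $x^\star$. First I would establish that $x^\star\in\dom(F)$: by Lemma \ref{131lemma5.4}(ii), each $x^k\in\lev(F,c_0)$, and since Assumption \ref{assumption1: not 0 same time} gives lower semicontinuity of $F$ (Proposition \ref{lemma:1225star}), the limit point satisfies $F(x^\star)\le c_0<+\infty$, so $g(x^\star)\neq0$ and $x^\star\in\dom(f)$. The key quantitative ingredient, replacing Theorem \ref{3.1theorem1}(iii) used in the monotone case, is to show $\lim_{k\to\infty}\|x^{k+1}-x^k\|_2=0$. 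For this I would use the standard Grippo--Lampariello--Lucidi telescoping argument on the nonmonotone sequence: from \eqref{118E2}, $F(x^{k+1})\le F(x^{\tau(k)})-\tfrac a2\|x^{k+1}-x^k\|_2^2$; since $\{F(x^{\tau(k)})\}$ is nonincreasing (Lemma \ref{131lemma5.4}(iii)) and bounded below by $0$, it converges to some limit. A now-classical induction (cf. the GLL analysis, or \cite{Wright-Nowak-Figueiredo:IEEE-TSP2009}) then shows $\|x^{k+1}-x^k\|_2\to0$, and moreover $F(x^k)\to$ the same limit as $F(x^{\tau(k)})$.

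With $\|x^{k+1}-x^k\|_2\to0$ in hand, the remainder parallels Theorem \ref{3.2theorem1} closely. I would deduce $x^{k_j-1}\to x^\star$ as well. Since $g$ is a real-valued convex function and $\{x^{k_j-1}\}$ is bounded, $\{y^{k_j}\}\subseteq\partial g(x^{k_j-1})$ is bounded, and along a further subsequence $y^{k_j}\to y^\star\in\partial g(x^\star)$ by closedness of $\partial g$; likewise the step sizes $\alpha_{k_j-1}$ lie in $[\tilde\alpha,\bar\alpha]$ by Lemma \ref{131lemma5.4}(i), so along a further subsequence $\alpha_{k_j-1}\to\alpha\in[\tilde\alpha,\bar\alpha]$. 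The iterate satisfies
\begin{equation*}
x^{k_j}\in\mathrm{prox}_{\alpha_{k_j-1}f}\bigl(x^{k_j-1}-\alpha_{k_j-1}\triangledown h(x^{k_j-1})+\alpha_{k_j-1}c_{k_j-1}y^{k_j}\bigr),
\end{equation*}
and since $c_{k_j-1}=F(x^{k_j-1})\to F(x^\star)=c_\star$ (using continuity of $f$ on $\dom(f)$, continuity of $h$ and $g$, and $g(x^\star)>0$), passing to the limit and using continuity of $\triangledown h$ and $f$ together with the closedness of the graph of $\mathrm{prox}_{\alpha f}$ yields $x^\star\in\mathrm{prox}_{\alpha f}(x^\star-\alpha\triangledown h(x^\star)+\alpha c_\star y^\star)$. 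By Proposition \ref{3.0ppsition}, $x^\star$ is a critical point of $F$.

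The main obstacle is the first paragraph: establishing $\|x^{k+1}-x^k\|_2\to0$ and the convergence of $F(x^k)$ in the nonmonotone regime. Unlike the monotone case, one cannot simply sum \eqref{118E2} directly; the argument requires tracking the index $\tau(k)$, showing that $\|x^{\tau(k)}-x^{\tau(k)-1}\|_2\to0$ first, then bootstrapping to control $\|x^{\tau(k)-\ell}-x^{\tau(k)-\ell-1}\|_2$ for each fixed $\ell$ up to $N+1$, and finally transferring this back to arbitrary $k$. This is the technically delicate part, though it is entirely standard once the sufficient-decrease-type inequality \eqref{118E2} and the monotonicity of $\{F(x^{\tau(k)})\}$ are available. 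Everything else is a routine transcription of the PGSA subsequential-convergence proof, with the boundedness of $\{\alpha_k\}$ away from $0$ (Lemma \ref{131lemma5.4}(i)) replacing the a priori bound $\alpha_k\ge\underline\alpha$ used there.
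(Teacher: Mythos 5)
Your proposal is correct and follows essentially the same route as the paper: the paper likewise reduces the claim to showing that $\{F(x^k)\}$ converges and $\|x^{k+1}-x^k\|_2\to 0$, obtains these from the monotonicity of $\{F(x^{\tau(k)})\}$ via the Grippo--Lampariello--Lucidi-type argument (citing Lemma 4 of \cite{Wright-Nowak-Figueiredo:IEEE-TSP2009} rather than spelling it out), and then reuses the limit-passing argument from the proof of Theorem \ref{3.2theorem1}. Your write-up simply makes explicit some steps the paper delegates to that reference.
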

\begin{proof}
	Let $x^{\star}$ be an accumulation point of $\{x^k:k\in\mathbb{N}\}$. According to the proof of Theorem \ref{3.2theorem1}, it suffices to show $\{F(x^k):k\in\mathbb{N}\}$ converges and $\lim\limits_{k\to\infty}\|x^{k+1}-x^k\|_2=0$. By Lemma \ref{131lemma5.4},  $\{F(x^{\tau(k)}):k\in\mathbb{N}\}$ is decreasing and $F\geq 0$. Hence, we have that $\lim\limits_{k\to\infty}F(x^{\tau (k)}) = \xi$ for some $\xi\geq 0$. Since $f$ is continuous on $\dom(f)$ and $\lev(F,c_0)$ is closed and bounded, we deduce that $F$ is uniformly continuous on $\lev(F,c_0)$. Noting that $\{x^k:k\in\mathbb{N}\}\subseteq \lev(F,c_0)$ and proceeding as in the proof of \cite[Lemma 4]{Wright-Nowak-Figueiredo:IEEE-TSP2009} starting from \cite[Equation (34)]{Wright-Nowak-Figueiredo:IEEE-TSP2009}, one can prove that $\lim\limits_{k\to\infty}F(x^k)=\xi$ and $\lim\limits_{k\to\infty}\|x^{k+1}-x^k\|_2=0$. We complete the proof.
\end{proof}

Under Assumptions 1-4 and assuming $F$ satisfies the KL property, we can prove the global convergence of the entire sequence generated by PGSA\_ML.

\begin{theorem}\label{theorem:118star}
	Suppose that Assumptions 1-4 hold and $F$ satisfies the KL property at any point in $\dom(F)$. Let $\{x^k:k\in\mathbb{N}\}$ be generated by PGSA\_ML. Then $\sum_{k=1}^{\infty}\|x^k-x^{k-1}\|_2<+\infty$ and $\{x^k:k\in\mathbb{N}\}$ converges to a critical point of $F$.
\end{theorem}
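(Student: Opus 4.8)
The plan is to run, for PGSA\_ML, exactly the argument used for PGSA in the proof of Theorem~\ref{theorem:4.3.1-1}: verify the three structural hypotheses of Proposition~\ref{3.3ppsition} for $F$ and the sequence $\{x^k:k\in\mathbb{N}\}$, and then invoke the KL property. By Theorem~\ref{theorem:118triangle}, every accumulation point of $\{x^k:k\in\mathbb{N}\}$ is a critical point of $F$, and in particular lies in $\dom(F)$; hence once we establish convergence of the whole sequence its limit is automatically a critical point, and the real work is the verification of Proposition~\ref{3.3ppsition}(i)--(iii) at a subsequential limit $x^\star$, followed by the KL property assumed at $x^\star\in\dom(F)$.

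\emph{Sufficient decrease and relative error.} Since $N=0$, the acceptance test \eqref{118E2} reads $F(x^{k+1})\le c_k-\tfrac a2\|x^{k+1}-x^k\|_2^2$, and by Lemma~\ref{131lemma5.4} we have $x^k\in\lev(F,c_0)\subseteq\dom(F)$, so $c_k=F(x^k)$; this is precisely Proposition~\ref{3.3ppsition}(i) with constant $a/2$. For the relative error condition I would transcribe the proof of Lemma~\ref{lemma:3.3}: Lemma~\ref{131lemma5.4} gives boundedness of $\{x^k\}$ and $\alpha_k\in[\tilde{\alpha},\bar{\alpha}]$ with $\tilde{\alpha}>0$; Theorem~\ref{theorem:118triangle} together with continuity of $g$ on $\Omega$ yields a uniform lower bound $g(x^k)\ge t>0$; $\{c_k\}$ is bounded above by $c_0$; and $\|\triangledown g(x^{k+1})\|_2$ is bounded. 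Using the proximal optimality relation at step $k$ and the quotient rule for $\widehat{\partial}F$ (available under Assumptions~\ref{assumption3: f-Lcontinuous}--\ref{assumption4: g-Ldifferentiable} by Proposition~\ref{ppsition:2.2-1}), one produces $\omega^{k+1}\in\widehat{\partial}F(x^{k+1})\subseteq\partial F(x^{k+1})$ of the same shape as in Lemma~\ref{lemma:3.3}, with $\|\omega^{k+1}\|_2\le b\|x^{k+1}-x^k\|_2$ for a suitable $b>0$ (the lower step-size bound $\underline{\alpha}$ being replaced by $\tilde{\alpha}$).

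\emph{Continuity condition.} Boundedness of $\{x^k\}$ gives a subsequence $x^{k_j}\to x^\star$. From the proof of Theorem~\ref{theorem:118triangle} we already know $\|x^{k+1}-x^k\|_2\to0$ and that $\{F(x^k)\}$ converges, so $x^{k_j-1}\to x^\star$; passing to a further subsequence we may assume $\alpha_{k_j-1}\to\alpha^\star\in[\tilde{\alpha},\bar{\alpha}]$ and $y^{k_j}\to y^\star\in\partial g(x^\star)$. Writing $z^{k_j-1}:=x^{k_j-1}-\alpha_{k_j-1}\triangledown h(x^{k_j-1})+\alpha_{k_j-1}c_{k_j-1}y^{k_j}$, which converges to some $z^\star$, the definition of the proximity operator gives $f(x^{k_j})+\tfrac{1}{2\alpha_{k_j-1}}\|x^{k_j}-z^{k_j-1}\|_2^2\le f(x^\star)+\tfrac{1}{2\alpha_{k_j-1}}\|x^\star-z^{k_j-1}\|_2^2$; letting $j\to\infty$ and using $x^{k_j}\to x^\star$ and $z^{k_j-1}\to z^\star$ yields $\limsup_j f(x^{k_j})\le f(x^\star)$, while lower semicontinuity of $f$ gives the opposite inequality, hence $f(x^{k_j})\to f(x^\star)$. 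Since $g(x^\star)>0$ and $h$ is continuous, $F(x^{k_j})\to F(x^\star)$, which is Proposition~\ref{3.3ppsition}(iii). With all three conditions in hand and $F$ satisfying the KL property at $x^\star\in\dom(F)$, Proposition~\ref{3.3ppsition} yields $\sum_{k\ge1}\|x^k-x^{k-1}\|_2<+\infty$ and $x^k\to x^\star$; by Theorem~\ref{theorem:118triangle}, $x^\star$ is a critical point of $F$.

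\emph{Main obstacle.} The only step that is not a mechanical reuse of the PGSA analysis is the continuity condition: because $F$ is merely lower semicontinuous (Proposition~\ref{lemma:1225star}), one cannot obtain $F(x^{k_j})\to F(x^\star)$ for free, and it is the proximal inequality above --- a standard device in KL-type convergence proofs --- that closes the gap. Everything else is routine bookkeeping, made possible by the uniform step-size bounds $\alpha_k\in[\tilde{\alpha},\bar{\alpha}]$ furnished by Lemma~\ref{131lemma5.4}, which play here the role that the fixed range $[\underline{\alpha},\bar{\alpha}]$ plays in the analysis of PGSA.
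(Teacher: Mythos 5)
Your proposal is correct and follows essentially the same route as the paper: verify the sufficient-decrease, relative-error and continuity conditions of Proposition~\ref{3.3ppsition} (the first from \eqref{118E2} with $N=0$, the second by rerunning the proof of Lemma~\ref{lemma:3.3} with the step-size bounds $[\tilde{\alpha},\bar{\alpha}]$ from Lemma~\ref{131lemma5.4}), and then invoke the KL property together with Theorem~\ref{theorem:118triangle}. The only cosmetic difference is the continuity condition: the paper gets $F(x^{k_j})\to F(x^\star)$ directly from the standing assumption that $f$ is continuous on $\dom(f)$ (so $F$ is continuous on the compact set $\lev(F,c_0)$), whereas your proximal-inequality argument reproves this without that assumption --- both are valid.
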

\begin{proof}
	From Theorem \ref{theorem:118triangle}, it suffices to prove that $\sum_{k=1}^{\infty}\|x^k-x^{k-1}\|_2<+\infty$ and $\{x^k:k\in\mathbb{N}\}$ is convergent. According to Proposition \ref{3.3ppsition}, we need to verify Items {\upshape(\romannumeral 1)}-{\upshape(\romannumeral 3)} of the proposition, the boundedness of $\{x^k:k\in\mathbb{N}\}$ and that $F$ is lower semicontinuous.
	
	First, the boundedness of $\{x^k:k\in\mathbb{N}\}$ and lower semicontinuity of $F$ follow from Lemma \ref{131lemma5.4} and Proposition \ref{lemma:1225star}, respectively. Items {\upshape(\romannumeral 1)} and {\upshape(\romannumeral 3)} of Proposition \ref{3.3ppsition} are direct consequence of Lemma \ref{131lemma5.4} and Theorem \ref{theorem:118triangle}. Proposition \ref{3.3ppsition} {\upshape(\romannumeral 2)} can be obtained by a proof similar to that of Lemma \ref{lemma:3.3}. Therefore, we complete the proof.
\end{proof}

The convergence rate analysis of PGSA\_ML is almost the same as that of PGSA in Theorem \ref{theorem:4.3.1-2}. Here, we omit the details and present the corresponding results in the next theorem.
%\newline
\begin{theorem}\label{theorem:202 5.4}
	Suppose that Assumptions 1-4 hold. Let $\{x^k:k\in\mathbb{N}\}$ be generated by PGSA\_ML and suppose that $\{x^k:k\in\mathbb{N}\}$ converges to $x^{\star}$. Assume further that $F$ satisfies the KL property at $x^{\star}$ with $\phi(s) = ds^{1-\theta}$ for some $d>0$ and $\theta\in[0,1)$, then Items {\upshape(\romannumeral 1)}-{\upshape(\romannumeral 3)} of Theorem \ref{theorem:4.3.1-2} hold.
\end{theorem}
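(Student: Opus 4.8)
The plan is to follow the same blueprint used for PGSA in Theorem \ref{theorem:4.3.1-2}, namely to invoke a KL-based convergence-rate argument of the type in \cite[Theorem 2]{Attouch-Bolte:MP2009}, but first I would check that all the ingredients that argument needs are already available for PGSA\_ML. From Theorem \ref{theorem:118star} (and Lemma \ref{131lemma5.4}) we have the three hypotheses of Proposition \ref{3.3ppsition}: a sufficient-decrease inequality $F(x^{k+1}) + \frac{a}{2}\|x^{k+1}-x^k\|_2^2 \le F(x^k)$ (this uses $N=0$, i.e. the monotone line search, so that \eqref{118E2} reads $F(x^{k+1}) \le c_k - \frac{a}{2}\|x^{k+1}-x^k\|_2^2$), a relative-error bound $\|\omega^{k+1}\|_2 \le b\|x^{k+1}-x^k\|_2$ with $\omega^{k+1}\in\partial F(x^{k+1})$ (proved as in Lemma \ref{lemma:3.3}, using that $\alpha_k \ge \tilde\alpha > 0$ from Lemma \ref{131lemma5.4}(i) in place of $\alpha_k \ge \underline{\alpha}$), and the continuity condition along the convergent sequence. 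Thus the whole machinery is in place; only the step-size lower bound differs, and it is harmless.

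The key steps, in order, would be: (1) set $r_k := F(x^k) - F(x^\star) \ge 0$ and note $r_k \downarrow 0$; if some $r_{K}=0$ then the sufficient-decrease inequality forces $x^k$ to be eventually constant, giving case (i) when $\theta=0$ after also invoking the KL inequality to rule out $r_k>0$ staying positive (the standard $\theta=0$ argument). (2) For $r_k > 0$, combine the KL inequality $\phi'(r_k)\,\dist(0,\partial F(x^k)) \ge 1$ with $\phi'(s) = d(1-\theta)s^{-\theta}$ and the relative-error bound to get $d(1-\theta) r_k^{-\theta} \cdot b\|x^k - x^{k-1}\|_2 \ge 1$, i.e. a lower bound on $\|x^k-x^{k-1}\|_2$ in terms of $r_k^{\theta}$. (3) Using concavity of $\phi$, $\phi(r_k) - \phi(r_{k+1}) \ge \phi'(r_k)(r_k - r_{k+1}) \ge \phi'(r_k)\cdot\frac{a}{2}\|x^{k+1}-x^k\|_2^2$, and together with the relative-error bound obtain the telescoping-friendly inequality $\|x^{k+1}-x^k\|_2^2 \lesssim \big(\phi(r_k)-\phi(r_{k+1})\big)\,\|x^k-x^{k-1}\|_2$; then the AM-GM trick $\|x^{k+1}-x^k\|_2 \le \frac{1}{2}\|x^k-x^{k-1}\|_2 + C\big(\phi(r_k)-\phi(r_{k+1})\big)$ gives summability of $\{\|x^{k+1}-x^k\|_2\}$ (re-deriving finite length, which we already know) and, more importantly, a recursion for the tail length $S_k := \sum_{j\ge k}\|x^{j+1}-x^j\|_2$. (4) Feed the lower bound from step (2), $r_k \lesssim \|x^k-x^{k-1}\|_2^{1/\theta}$, into $\phi(r_k) = d r_k^{1-\theta} \lesssim \|x^k-x^{k-1}\|_2^{(1-\theta)/\theta}$, so that $S_k$ satisfies $S_k \lesssim (S_{k-1}-S_k)^{\,?}$-type inequalities; distinguishing $\theta \in (0,1/2]$ (where the exponent yields a geometric/linear rate, case (ii)) from $\theta \in (1/2,1)$ (where one gets the sublinear $k^{-(1-\theta)/(2\theta-1)}$ rate, case (iii)) is exactly the standard dichotomy, and $\|x^k - x^\star\|_2 \le S_k$ transfers the rate to the iterates.

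The only genuinely new point relative to Theorem \ref{theorem:4.3.1-2}, and hence the step I would flag as the main obstacle, is re-establishing the two structural inequalities — sufficient decrease with a uniform constant $a$ and the relative-error bound with a uniform constant $b$ — in the line-search setting, because $\alpha_k$ is now chosen adaptively rather than fixed in $(0,1/L)$. This is resolved by Lemma \ref{131lemma5.4}(i), which guarantees $\alpha_k \ge \tilde\alpha := \eta/(aM+L) > 0$ after at most $T$ backtracking steps, so the proof of Lemma \ref{lemma:3.3} goes through verbatim with $\underline{\alpha}$ replaced by $\tilde\alpha$; and the monotone version of \eqref{118E2} ($N=0$) supplies the sufficient-decrease inequality directly. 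Once these are in hand the remaining calculus — choice of $\phi$, the telescoping and AM-GM manipulations, the case split on $\theta$ — is identical to \cite[Theorem 2]{Attouch-Bolte:MP2009} and we would simply refer to it rather than reproduce it, which is precisely why we omit the detailed proof here.
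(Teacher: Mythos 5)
Your proposal is correct and follows essentially the same route as the paper, which explicitly omits the proof of this theorem on the grounds that it is ``almost the same as that of PGSA in Theorem \ref{theorem:4.3.1-2}'' (itself deferred to the argument of \cite[Theorem 2]{Attouch-Bolte:MP2009}). You correctly identify the only genuinely new ingredients — the sufficient-decrease inequality coming directly from the monotone ($N=0$) acceptance test \eqref{118E2}, and the relative-error bound obtained by rerunning the proof of Lemma \ref{lemma:3.3} with the uniform step-size lower bound $\alpha_k\geq\tilde{\alpha}$ from Lemma \ref{131lemma5.4}(i) in place of $\underline{\alpha}$ — and the remaining KL calculus you sketch is the standard one.
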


\section{Applications to sparse generalized eigenvalue problem}\label{section:sparse_generalized_eigenvalue}
In this section, we identify SGEP associated with a pair of symmetric positive semidefinite matrices as a special case of problem \eqref{problem:root} and apply our proposed algorithms. Then we establish the global sequential (resp., subsequential) convergence of the sequence generated by PGSA and PGSA\_ML (resp., PGSA\_NL)  for SGEP. In addition, we prove that the sequence generated by PGSA or PGSA\_ML converges R-linearly by establishing that the KL exponent is $\frac{1}{2}$ at any critical point of SGEP.

Assume that both $A$, $B$ are $n\times n$ symmetric positive semidefinite matrices and any $r\times r$ principal sub-matrix of $B$ is positive definite for some integer $r\in [1,n]$. If there exist $\lambda^{\star}\in\mathbb{R}$ and $x^{\star}\in\mathbb{R}^n$, such that
$
Ax^{\star} = \lambda^{\star}Bx^{\star},
$
then $x^\star$ is called the generalized eigenvector with respect to the generalized eigenvalue $\lambda^{\star}$ of the matrix pair $(A,B)$. Obviously, the leading generalized eigenvector with respect to the largest generalized eigenvalue can be obtained by solving the following optimization problem
\begin{equation}\label{117E2}
\max\; \left\{ \frac{x^TAx}{x^TBx}:\|x\|_2=1,~x^TBx\neq 0, ~x\in\mathbb{R}^n \right\}.
\end{equation}
In the context of sparse modeling, it is natural to incorporate the sparsity constraint into problem \eqref{117E2}. This leads to the SGEP:
\begin{equation}\label{117E3}
\max\; \left\{ \frac{x^TAx}{x^TBx}:\|x\|_2=1,~\|x\|_0\leq r, ~x^TBx\neq 0, ~x\in\mathbb{R}^n \right\},
\end{equation}
where the $\ell_0$ function $\|\cdot\|_0$ counts the number of nonzero components in a vector. The SGEP covers several statical learning models, such as the sparse principle component analysis \cite{d2008optimal, zou2006sparse}, sparse fisher discriminant analysis \cite{Clemmensen-Hastie-Witten:Technometrics2011, Mai-Zou-Yuan:Biometrika2012}, sparse sliced inverse regression \cite{Chen-Zou-Cook:The_Annals_of_Statistics2010, Li-Nachtsheim:Technometrics2006} and so on. One can easily check that the optimal solution set of SGEP is completely the same as that of the following minimization problem
\begin{equation}\label{117E4}
\min\; \left\{ \frac{x^TBx}{x^TAx}:\|x\|_2=1,~\|x\|_0\leq r,~x^TAx\neq 0,~x\in\mathbb{R}^n \right\}.
\end{equation}
Thus, problem \eqref{117E4} is another formulation of SGEP. We also notice that problem \eqref{117E4} is not a classical quadratic fractional problem due to its nonconvex constraints. In fact, problem \eqref{117E4} is a special case of the general optimization problem \eqref{problem:root} with $f$ being the indicator function on the set $\{ x\in\mathbb{R}^n: \|x\|_0\leq r,~\|x\|_2=1 \}$, $g(x) = x^TAx$, $h(x) = x^TBx$ for $x\in\mathbb{R}^n$. Therefore, the proposed PGSA and PGSA\_L can be directly applied to problem \eqref{117E4}. For convenience of presentation, we denote the constraint set $\{x\in\mathbb{R}^n:\|x\|_0\leq r,~\|x\|_2=1  \}$ in problem \eqref{117E4} by $C$ and define $G:\mathbb{R}^n\to\overline{\mathbb{R}}$ at $x\in\mathbb{R}^n$ as
\begin{equation*}
G(x):=
\begin{cases}
\DF{x^TBx}{x^TAx}, &\text{if~}x\in C\text{~and~} x^TAx\neq 0,\\[8pt]
+\infty,&\text{else.}
\end{cases}
\end{equation*}

%========================================  6.1  =================================================
%========================================  6.1  =================================================
\subsection{Critical points of problem \eqref{117E4}}
In this subsection, we have a closer look at the critical points of problem \eqref{117E4}. We begin with the following lemma concerning the Fr\'{e}chet subdifferential of the indicator function $\iota_C$.
%\newline
\begin{lemma}\label{lemma:7.0-1}
	Let $x\in C$ and $\Lambda$ be the support of $x$, then the following statements hold:
	\begin{enumerate}[\upshape(\romannumeral 1)]
		\item $\widehat{\partial} \iota_C(x) =
		\begin{cases}
		\{v\in\mathbb{R}^n:v=tx,t\in\mathbb{R} \}, & \text{if~~} \|x\|_0<r,\\
		\{v\in\mathbb{R}^n:v_\Lambda = tx_\Lambda, t\in\mathbb{R} \}, &\text{else.}
		\end{cases}$
		\item For any $v\in{\partial}\iota_C(x)$, there exists $t\in\mathbb{R}$, such that $v_\Lambda = t x_\Lambda$.
		\item If $r=n$, i.e., $C=\{x\in\mathbb{R}^n:\|x\|_2=1 \}$, then $\widehat{\partial}\iota_C(x) = {\partial}\iota_C(x) = \{v\in\mathbb{R}^n:v=tx,t\in\mathbb{R} \}$.
	\end{enumerate}
\end{lemma}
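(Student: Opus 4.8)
The plan is to compute $\widehat{\partial}\iota_C(x)$ directly from the definition of the Fr\'echet subdifferential and then bootstrap to $\partial\iota_C(x)$ via the limiting construction. Recall that since $\iota_C$ takes only the values $0$ and $+\infty$, a vector $v$ lies in $\widehat{\partial}\iota_C(x)$ if and only if $\liminf_{z\to x,\,z\in C,\,z\neq x}\frac{-\langle v,z-x\rangle}{\|z-x\|_2}\geq 0$, i.e., $\langle v,z-x\rangle\leq o(\|z-x\|_2)$ for $z\in C$ near $x$. So the whole computation reduces to understanding the first-order geometry of $C=\{x:\|x\|_0\leq r,\|x\|_2=1\}$ at the point $x$.

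For item (i), I would split into the two cases according to $\|x\|_0$. If $\|x\|_0=r$ with support $\Lambda$, then for $z\in C$ sufficiently close to $x$ we still have $\mathrm{supp}(z)\subseteq\Lambda$ (the zero components of $x$ cannot become nonzero without a jump, since then $\|z\|_0>r$ would be forced, or more precisely the nonzero entries of $x$ stay bounded away from $0$), so locally $C$ looks like the unit sphere of $\mathbb{R}^{|\Lambda|}$ embedded in the coordinate subspace indexed by $\Lambda$; the Fr\'echet normal cone to a sphere at $x_\Lambda$ is the line $\mathbb{R}x_\Lambda$, and the components off $\Lambda$ are unconstrained to first order only in the sense that $z_{\Lambda^c}$ contributes a term of order $\|z-x\|_2$, which forces $v_{\Lambda^c}=0$. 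Hence $\widehat{\partial}\iota_C(x)=\{v:v_\Lambda=tx_\Lambda,\ v_{\Lambda^c}=0,\ t\in\mathbb{R}\}=\{tx:t\in\mathbb{R}\}$ — wait, this must be reconciled with the stated formula, so I would be careful: when $\|x\|_0=r$ the claim is $v_\Lambda=tx_\Lambda$ with $v_{\Lambda^c}$ free, which happens precisely because one can approach $x$ within $C$ along directions that keep some coordinate in $\Lambda^c$ zero while perturbing it is \emph{not} allowed to stay in $C$ unless another coordinate is killed; I would examine the admissible tangent directions carefully and conclude that perturbations into $\Lambda^c$ are "one-sided" in a way that imposes no sign-definite constraint, leaving $v_{\Lambda^c}$ unconstrained. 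If $\|x\|_0<r$, then locally we may perturb freely into the sphere $\|z\|_2=1$ in \emph{all} $n$ coordinates (the sparsity constraint is inactive), so $C$ is locally the full unit sphere and $\widehat{\partial}\iota_C(x)=\mathbb{R}x$.

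For item (ii), I would use the limiting construction: $v\in\partial\iota_C(x)$ means $v=\lim v^k$ with $v^k\in\widehat{\partial}\iota_C(x^k)$ and $x^k\to x$, $x^k\in C$. If infinitely many $x^k$ have $\|x^k\|_0<r$, then $v^k=t_kx^k$ and passing to the limit gives $v=tx$, so certainly $v_\Lambda=tx_\Lambda$. Otherwise eventually $\|x^k\|_0=r$ with support $\Lambda_k$; since $x^k\to x$ and the nonzero entries of $x$ are bounded away from $0$, we get $\Lambda\subseteq\Lambda_k$ for large $k$, and then $v^k_{\Lambda_k}=t_kx^k_{\Lambda_k}$, so in particular $v^k_\Lambda=t_kx^k_\Lambda$, and taking limits along a subsequence where $t_k$ converges (which it must, using that $x_\Lambda\neq 0$ so $t_k=\langle v^k_\Lambda,x^k_\Lambda\rangle/\|x^k_\Lambda\|_2^2$ is controlled) yields $v_\Lambda=tx_\Lambda$. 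Item (iii) is then immediate: when $r=n$ the sparsity constraint is vacuous, $C$ is the full unit sphere which is a smooth manifold, so Fr\'echet and limiting normal cones coincide and both equal $\mathbb{R}x$.

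The main obstacle I anticipate is getting the $\|x\|_0=r$ case of item (i) exactly right — specifically pinning down why the components of $v$ off the support are \emph{unconstrained} rather than forced to be zero. This hinges on a precise description of which sequences $z^k\to x$ remain in $C$: one can have $z^k$ with a coordinate $j\in\Lambda^c$ turned on only by simultaneously turning off a coordinate in $\Lambda$, but since $x$'s entries on $\Lambda$ are nonzero, turning one off requires a perturbation of order $1$, not $o(1)$ — so in fact such $z^k$ do \emph{not} converge to $x$, meaning locally $\mathrm{supp}(z)=\Lambda$ is forced and $v_{\Lambda^c}$ should be zero after all. I would need to reconcile this with the stated lemma, which suggests the intended reading is that $\widehat\partial\iota_C(x)$ is described only up to its restriction on $\Lambda$; I will state and prove the sharp form ($v_{\Lambda^c}=0$ when $\|x\|_0=r$) and note it refines the displayed formula, or alternatively verify that the stronger conclusion is what is actually used downstream. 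The rest of the argument is routine normal-cone bookkeeping.
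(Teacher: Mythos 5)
There is a genuine error in your treatment of item (i) in the case $\|x\|_0=r$, and it is the opposite of the truth. You correctly observe at the end that for $z\in C$ close to $x$ the support is forced: the entries $x_i$, $i\in\Lambda$, are bounded away from zero, so $\Lambda\subseteq\mathrm{supp}(z)$, and $|\mathrm{supp}(z)|\le r=|\Lambda|$ then gives $\mathrm{supp}(z)=\Lambda$. But the consequence you draw, ``$v_{\Lambda^c}$ should be zero after all,'' is backwards. Since every nearby feasible $z$ has $z_{\Lambda^c}=0=x_{\Lambda^c}$, the pairing $\langle v,z-x\rangle=\langle v_{\Lambda},z_{\Lambda}-x_{\Lambda}\rangle$ never sees $v_{\Lambda^c}$ at all, so the Fr\'echet condition places \emph{no} constraint on $v_{\Lambda^c}$: locally $C$ is the sphere $S^{|\Lambda|-1}\times\{0_{\Lambda^c}\}$, whose normal cone at $x$ is $\{v:v_{\Lambda}=tx_{\Lambda},\ t\in\mathbb{R}\}$ with $v_{\Lambda^c}$ arbitrary (a subspace of dimension $1+n-r$, the orthogonal complement of the $(r-1)$-dimensional tangent space). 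A two-dimensional check: $n=2$, $r=1$, $x=(1,0)$ makes $x$ an isolated point of $C$, so $\widehat{\partial}\iota_C(x)=\mathbb{R}^2$, matching the lemma and contradicting your ``sharp form'' $v_2=0$. Your proposed refinement is therefore false, and adopting it would break the downstream use: in the characterization of critical points with $|\Lambda|=r$ (Proposition \ref{ppsition:7.0-1}), the converse direction explicitly chooses $v_{\Lambda^c}=2(G(x^{\star})Ax^{\star}-Bx^{\star})_{\Lambda^c}\neq 0$ in general, which is legitimate only because $v_{\Lambda^c}$ is free.

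A secondary inaccuracy: in the case $\|x\|_0<r$ you assert that $C$ is locally the full unit sphere, which is false when $n>r$ (one cannot activate all off-support coordinates simultaneously). The conclusion $\widehat{\partial}\iota_C(x)=\mathbb{R}x$ is nevertheless correct, but it needs an argument: the paper shows $\mathbb{R}x\subseteq\widehat{\partial}\iota_C(x)$ via the identity $\langle tx,x-y\rangle/\|x-y\|_2=t\sqrt{(1-x^Ty)/2}\to 0$ on the sphere, gets $v_{\Lambda}=tx_{\Lambda}$ by restricting to sequences supported on $\Lambda$, and then kills $v_{j_0}$ for $j_0\notin\Lambda$ by an explicit sequence $y^k$ with $y^k_{\Lambda}=\sqrt{1-1/k^2}\,x_{\Lambda}$ and $y^k_{j_0}=\pm 1/k$, along which the defining $\liminf$ would be $-|v_{j_0}|<0$. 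Your items (ii) and (iii) are essentially the paper's arguments and are fine, but they consume item (i) as input, so the case $\|x\|_0=r$ must be corrected first.
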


The proof of Lemma \ref{lemma:7.0-1} is given in Appendix \ref{appendixC}. With the help of Lemma \ref{lemma:7.0-1}, we characterize the relationship between the critical points of $G$ and the generalized eigenvectors of matrix pair $(A,B)$ or the related sub-matrix pair of $(A,B)$.
%\newline
\begin{proposition}\label{ppsition:7.0-1}
	Let $x^\star\in \dom(G)$ and $\Lambda$ be the support of $x^\star$. Then $x^\star$ is a critical point of $G$ if and only if one of the following statements hold:
	\begin{enumerate}[{\upshape(\romannumeral 1)}]
		\item $|\Lambda| <r$ and $x^\star$ is a unit generalized eigenvector with respect to the generalized eigenvalue $1/G(x^\star)$ of the matrix pair $(A, B)$, i.e., $Bx^\star = G(x^\star) A x^\star$;
		\item $|\Lambda| = r$ and $x^\star_\Lambda$ is a unit generalized eigenvector with respect to the generalized eigenvalue $1/G(x^\star)$ of the matrix pair $(A_\Lambda, B_\Lambda)$, i.e., $B_\Lambda x^\star_\Lambda = G(x^\star) A_\Lambda x^\star_\Lambda$.
	\end{enumerate}
\end{proposition}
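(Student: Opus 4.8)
The plan is to apply the characterization of critical points of $F$ (Definition~\ref{Def:critical_point}) to the specific data $f = \iota_C$, $g(x) = x^TAx$, $h(x) = x^TBx$, and then translate the resulting subdifferential inclusion into the generalized eigenvector equations using Lemma~\ref{lemma:7.0-1}. First I would record that for these choices $\triangledown h(x^\star) = 2Bx^\star$, $\triangledown g(x^\star) = 2Ax^\star$ (note $g$ is smooth here, so ${\partial} g(x^\star) = \{2Ax^\star\}$), $\widehat{\partial} f(x^\star) = \widehat{\partial}\iota_C(x^\star)$, and $c_\star = G(x^\star) = \frac{(x^\star)^TBx^\star}{(x^\star)^TAx^\star}$. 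By Definition~\ref{Def:critical_point}, $x^\star$ is a critical point of $G$ if and only if
\begin{equation*}
0 \in \widehat{\partial}\iota_C(x^\star) + 2Bx^\star - 2c_\star Ax^\star,
\end{equation*}
i.e.\ $2c_\star Ax^\star - 2Bx^\star \in \widehat{\partial}\iota_C(x^\star)$, equivalently $c_\star Ax^\star - Bx^\star \in \widehat{\partial}\iota_C(x^\star)$ (scaling the Fr\'echet subdifferential of an indicator by a positive constant does nothing, but here it is cleaner to just keep the factor of $2$ and absorb it, since $\widehat{\partial}\iota_C(x^\star)$ is a cone in the relevant directions).

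Next I would split into the two cases of Lemma~\ref{lemma:7.0-1}(i) according to whether $|\Lambda| < r$ or $|\Lambda| = r$. In the case $|\Lambda| < r$: the inclusion becomes $c_\star Ax^\star - Bx^\star = tx^\star$ for some $t\in\mathbb{R}$, i.e.\ $Bx^\star = (c_\star A - tI)x^\star$. Then I would pin down $t$: left-multiply by $(x^\star)^T$ and use $\|x^\star\|_2 = 1$ together with $c_\star (x^\star)^TAx^\star = (x^\star)^TBx^\star$ to get $(x^\star)^TBx^\star = c_\star(x^\star)^TAx^\star - t = (x^\star)^TBx^\star - t$, hence $t = 0$. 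This yields $Bx^\star = c_\star Ax^\star = G(x^\star)Ax^\star$, which is exactly statement (i); conversely, if $Bx^\star = G(x^\star)Ax^\star$ and $|\Lambda|<r$ then $c_\star Ax^\star - Bx^\star = 0 \in \widehat{\partial}\iota_C(x^\star)$, so $x^\star$ is critical. In the case $|\Lambda| = r$: the inclusion gives $v := c_\star Ax^\star - Bx^\star$ with $v_\Lambda = tx^\star_\Lambda$ for some $t$ and $v_i$ arbitrary for $i\notin\Lambda$. Restricting to the rows indexed by $\Lambda$ and using $\mathrm{supp}(x^\star)=\Lambda$ so that $(Ax^\star)_\Lambda = A_\Lambda x^\star_\Lambda$ and $(Bx^\star)_\Lambda = B_\Lambda x^\star_\Lambda$, I get $c_\star A_\Lambda x^\star_\Lambda - B_\Lambda x^\star_\Lambda = tx^\star_\Lambda$; the same left-multiplication argument by $(x^\star_\Lambda)^T$ with $\|x^\star_\Lambda\|_2 = \|x^\star\|_2 = 1$ and $c_\star (x^\star)^TAx^\star = (x^\star)^TBx^\star$ forces $t=0$, giving $B_\Lambda x^\star_\Lambda = G(x^\star)A_\Lambda x^\star_\Lambda$, i.e.\ statement (ii). For the converse in this case, I need to check that $B_\Lambda x^\star_\Lambda = G(x^\star)A_\Lambda x^\star_\Lambda$ implies $c_\star Ax^\star - Bx^\star \in \widehat{\partial}\iota_C(x^\star)$: the $\Lambda$-components of $c_\star Ax^\star - Bx^\star$ vanish, and by Lemma~\ref{lemma:7.0-1}(i) (the $|\Lambda|=r$ branch) any vector whose restriction to $\Lambda$ is a multiple of $x^\star_\Lambda$ (here the zero multiple) lies in $\widehat{\partial}\iota_C(x^\star)$, so the inclusion holds.

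The only subtlety — and the step I would be most careful about — is the converse direction when $|\Lambda| = r$: one must make sure that $\widehat{\partial}\iota_C(x^\star)$ in that branch of Lemma~\ref{lemma:7.0-1}(i) genuinely imposes no constraint on the components outside $\Lambda$, so that the off-support components of $c_\star Ax^\star - Bx^\star$ are harmless; this is exactly what the description $\{v : v_\Lambda = tx^\star_\Lambda,\ t\in\mathbb{R}\}$ says. A second minor point worth stating explicitly is that $c_\star = G(x^\star) \geq 0$ and $(x^\star)^TAx^\star \neq 0$ (both from $x^\star \in \dom(G)$), so dividing by $(x^\star)^TAx^\star$ and identifying $1/G(x^\star)$ as the generalized eigenvalue is legitimate, and the eigenvector equations can be rewritten in the displayed forms $Bx^\star = G(x^\star)Ax^\star$ and $B_\Lambda x^\star_\Lambda = G(x^\star)A_\Lambda x^\star_\Lambda$. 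Everything else is routine linear algebra.
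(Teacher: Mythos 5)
Your proposal is correct and follows essentially the same route as the paper's proof: both reduce criticality to the inclusion $0\in\widehat{\partial}\iota_C(x^\star)+2Bx^\star-2G(x^\star)Ax^\star$, split on $|\Lambda|<r$ versus $|\Lambda|=r$ via Lemma~\ref{lemma:7.0-1}, pin down the scalar multiple of $x^\star$ (resp.\ $x^\star_\Lambda$) by left-multiplying with $(x^\star)^T$ (resp.\ $(x^\star_\Lambda)^T$), and handle the converse in the $|\Lambda|=r$ case by exhibiting the subgradient whose $\Lambda$-components vanish. The subtleties you flag (the off-support components being unconstrained, and the rewriting of the eigenvalue equation) are exactly the points the paper addresses.
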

\begin{proof}
	According to Definition \ref{Def:critical_point}, $x^\star$ is a critical point of $G$ if and only if
	\begin{equation}\label{formula:7-0ppsition1-1}
	0\in\widehat{\partial}\iota_C(x^\star)+2B x^\star - 2G(x^\star)Ax^\star.
	\end{equation}
	We first prove Item {\upshape(\romannumeral 1)}. Assume that $|\Lambda|<r$. By Lemma \ref{lemma:7.0-1}, the inclusion \eqref{formula:7-0ppsition1-1} is equivalent to the following relation
	\begin{equation}
	d_1 x^\star + 2Bx^\star - 2G(x^\star)Ax^\star = 0
	\end{equation}
	for some $d_1\in\mathbb{R}$. Multiplying $(x^\star)^T$ on both sides of the above equality, we get that $d_1=0$. This proves Item {\upshape(\romannumeral 1)}.
	
	Next, we prove Item {\upshape(\romannumeral 2)}. Suppose that $|\Lambda| = r$. Invoking Lemma \ref{lemma:7.0-1} in this case, inclusion \eqref{formula:7-0ppsition1-1} implies that there exist $d_2\in\mathbb{R}$ and $v\in\mathbb{R}^n$ such that $v_\Lambda = d_2 x^\star_\Lambda$ and
	\begin{equation}\label{formula:ver15:5-2}
	v + 2Bx^\star-2G(x^\star)Ax^\star = 0.
	\end{equation}
	This yields that
	\begin{equation*}
	d_2x^\star_\Lambda + 2B_\Lambda x^\star_\Lambda - 2G(x^\star)A_\Lambda x^\star_\Lambda = 0.
	\end{equation*}
	Multiplying $(x^\star_\Lambda)^T$ on both sides of the above equality, we immediately obtain $d_2 =0$ and
	\begin{equation}\label{formula:ver15:5-1}
	B_{\Lambda}x^{\star}_{\Lambda} = G(x^{\star})A_{\Lambda}x^{\star}_{\Lambda}.
	\end{equation}
	
	Conversely, if $x^{\star}$ satisfies \eqref{formula:ver15:5-1}, set $v\in\mathbb{R}^n$ to be the vector that $v_{\Lambda} = 0$ and $v_{\Lambda^C} = 2(G(x^{\star})Ax^{\star}-Bx^{\star})_{\Lambda^C}$. Then, $v\in\widehat{\partial}\iota_C(x^{\star})$ and \eqref{formula:ver15:5-2} holds, that imply inclusion \eqref{formula:7-0ppsition1-1}. We then complete the proof.
\end{proof}

\subsection{Implementation and convergence of PGSA and PGSA\_L for problem \eqref{117E4}}
In this subsection, we discuss the implementation of PGSA and PGSA\_L for problem \eqref{117E4} and then establish their convergence results.

We note that the proposed algorithms for problem \eqref{117E4} mainly involve the computation of proximity operator associated with $\iota_C$ and the gradients of $x^TAx$ and $x^TBx$. Thus, the computational cost in these algorithms relies heavily on $\mathrm{prox}_{\iota_C}$, which is exactly the projection operator onto $C$, denoted here by $\mathrm{proj}_C$. We next show that $\mathrm{proj}_C$ has a closed form and thus can be efficiently computed. To this end, we first recall
%
%
%\subsection{PGSA and PGSA\_L for problem \eqref{117E4} and their convergence analysis}
%In this subsection, we apply PGSA and PGSA\_ to problem \eqref{117E4} and derive their convergence results. Clearly, the computational cost of each iteration in PGSA and PGSA\_L for problem \eqref{117E4} relies heavily on the evaluation of the proximity operator assosicated with $\iota_C$, which is exaclty the projection operator onto $C$, denoted here by $\mathrm{proj}_C$.
%
%To this end, we first recall
%
the projection operator onto the set $\{x\in\mathbb{R}^n:\|x\|_0\leq r \}$, denoted by $T_r(x)$. It is well-known that for $x\in\mathbb{R}^n$, $(T_r(x))_i=x_i$ for the $r$ largest components in absolute value of $x$ and $(T_r(x))_i=0$ else. Since the $r$ largest components may not be uniquely defined, $T_r$ is a set-valued operator. With the help of $T_r$ and Proposition 4.3 in \cite{Luss-Teboulle:SIAM2013}, we can immediately obtain the closed form of $\mathrm{proj}_C$ in the following proposition.
%With the help of $T_r$, we give the closed form of $\mathrm{proj}_C$ in the next proposition.
%To this end, we first recall the projection operator onto the set $\{x\in\mathbb{R}^n:\|x\|_0\leq r \}$. This operator at $x\in\mathbb{R}^n$, denoted by $T_r(x)$, is defined by
%\begin{equation*}
%T_r(x) := \arg\min\{\|y-x\|^2_2:\|y\|_0\leq r \}.
%\end{equation*}
%In addition, for any $z\in\mathbb{R}^n$, a sorting operator with respect to $z$, denoted by $\sigma_z$, is a mapping from $\mathbb{N}_n$ to $\mathbb{N}_n$, such that $|z_{\sigma_{z(1)}}| \geq |z_{\sigma_{z(2)}}| \geq \cdots \geq|z_{\sigma_{z(n)}}|$ and $\sigma_{z(i)} \neq \sigma_{z(j)}$ whenever $i\neq j$. It is well known that for any $x\in\mathbb{R}^n$,
%\begin{equation*}
%T_r(x) = \{u\in\mathbb{R}^n:~u_{\sigma_{x(i)}} = x_{\sigma_{x(i)}}\text{ for } 1\leq i\leq r,~ u_{\sigma_{x(i)}}=0 \text{ for }i>r\}.
%\end{equation*}
%We remark that since the sorting operator may not be uniquely defined, $T_r$ is in fact a set-valued operator. With the help of the operator $T_r$, we give the closed form of the projection operator $\mathrm{proj}_C$ in the next proposition.
%\newline
\begin{proposition}\label{ppsition:7.0-2}
	Given $x\in\mathbb{R}^n$, then
	\begin{equation*}
	\mathrm{proj}_C(x) = \begin{cases}
	\{\frac{y}{\|y\|_2}:y\in T_r(x) \},&\text{if }x\neq 0,\\
	C,&\text{else. }
	\end{cases}
	\end{equation*}
\end{proposition}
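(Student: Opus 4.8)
The plan is to reduce the projection onto $C$ to a linear maximization and then invoke the cited characterization of sparse $\ell_2$-ball maximizers. By definition, $\mathrm{proj}_C(x) = \mathrm{prox}_{\iota_C}(x) = \arg\min\{\|y-x\|_2^2 : y\in C\}$, and since $C=\{y\in\mathbb{R}^n:\|y\|_0\le r,\ \|y\|_2=1\}$ is a finite union of unit spheres in coordinate subspaces, it is compact, so this $\arg\min$ is nonempty. The first step is to observe that for every $y\in C$ one has $\|y\|_2=1$, hence $\|y-x\|_2^2 = 1 - 2\langle x,y\rangle + \|x\|_2^2$; consequently $\mathrm{proj}_C(x) = \arg\max\{\langle x,y\rangle : y\in C\}$.

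The case $x=0$ is then immediate: $\langle x,y\rangle = 0$ for every $y\in C$, so every point of $C$ is a maximizer and $\mathrm{proj}_C(0)=C$, as asserted.

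For $x\neq 0$ the second step is to maximize over a fixed support first and then over the choice of support. Fixing $\Lambda\subseteq\mathbb{N}_n$ with $|\Lambda|\le r$, the Cauchy--Schwarz inequality gives $\langle x_\Lambda,y_\Lambda\rangle \le \|x_\Lambda\|_2$ for any $y$ with $\mathrm{supp}(y)\subseteq\Lambda$ and $\|y\|_2=1$, with equality exactly when $y_\Lambda = x_\Lambda/\|x_\Lambda\|_2$ (note $x_\Lambda\neq 0$ once $\Lambda$ contains an index where $x$ is nonzero). Maximizing the resulting value $\|x_\Lambda\|_2$ over all admissible $\Lambda$ selects precisely those $\Lambda$ made up of indices of the $r$ largest components of $x$ in absolute value, which are exactly the supports arising from $T_r(x)$. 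Collecting the corresponding maximizers then yields $\mathrm{proj}_C(x) = \{y/\|y\|_2 : y\in T_r(x)\}$. Equivalently, this last step can be quoted directly from Proposition~4.3 of \cite{Luss-Teboulle:SIAM2013} applied to the linear functional $\langle x,\cdot\rangle$ on $\{y:\|y\|_0\le r,\ \|y\|_2\le 1\}$, after remarking that a nonzero linear functional attains its maximum over a ball on the boundary sphere.

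The only mildly delicate point is the handling of set-valuedness: when $x$ has ties between its $r$-th and $(r+1)$-th largest magnitudes (or has fewer than $r$ nonzero entries), several supports $\Lambda$ are simultaneously optimal, and one must verify that the family of normalized vectors $\{y/\|y\|_2 : y\in T_r(x)\}$ matches the full set of maximizers rather than a proper subset of it; phrasing everything through $T_r$ makes this bookkeeping transparent, and the remaining content is the routine Cauchy--Schwarz argument above.
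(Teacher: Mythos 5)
Your argument is correct and is essentially the route the paper takes: the paper obtains the formula by combining the operator $T_r$ with Proposition~4.3 of \cite{Luss-Teboulle:SIAM2013}, i.e.\ precisely the reduction of $\mathrm{proj}_C(x)$ to $\arg\max\{\langle x,y\rangle: y\in C\}$ that you carry out (and then verify by hand via Cauchy--Schwarz). The only difference is that you supply the elementary details and the tie-breaking bookkeeping that the paper leaves to the citation.
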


Next, we investigate the convergence property of PGSA and PGSA\_L for problem \eqref{117E4} based on the general convergence results presented in Section \ref{ssection3.3:Global_convergence} and Section \ref{section5:PGSA_L}. To this end, we shall verify Assumptions 1-4 hold for problem \eqref{117E4} and $G$ is a KL function. First, since $B_{\Lambda}$ is symmetric positive definite for any subset $\Lambda\subseteq \mathbb{N}_n$ with $|\Lambda| \leq r$, then $\iota_C(x) + x^TBx$ does not attain $0$ for all $x\in\mathbb{R}^n$. Second, the level boundedness of $G$ follows from the boundedness of $C$. In addition, it is obvious that $\iota_C$ is locally Lipschitz continuous on $C$ and $x^TAx$ is continuously differentiable with a Lipschitz continuous gradient. Finally, we show that in the following proposition $G$ is a semialgebraic function and thus satisfies the KL property. We refer readers to \cite[Section 2.2]{Attouch-Bolte:MP:2013} for the definition of the semialgebraic function and its relation to the KL property.

\begin{proposition}\label{lemma:7.1-1-G_is_semialgebraic}
	$G$ is a semialgebraic function.
\end{proposition}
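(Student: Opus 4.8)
The plan is to exhibit $G$ as a semialgebraic function by showing that its graph is a semialgebraic subset of $\mathbb{R}^n\times\mathbb{R}$, using the standard stability properties of the class of semialgebraic sets (closure under finite unions, intersections, complements, projections, and the fact that polynomial (in)equalities define semialgebraic sets). Recall that $G(x) = \frac{x^T B x}{x^T A x}$ on the set where $x\in C$ and $x^T A x\neq 0$, and $G(x)=+\infty$ otherwise, where $C=\{x\in\mathbb{R}^n:\|x\|_2=1,\ \|x\|_0\leq r\}$.

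First I would show that the constraint set $C$ is semialgebraic. The sphere $\{x:\|x\|_2^2=1\}$ is the zero set of a polynomial, hence semialgebraic. For the sparsity constraint, write $\{x:\|x\|_0\leq r\}=\bigcup_{\Lambda\subseteq\mathbb{N}_n,\ |\Lambda|\leq r}\{x:x_i=0\ \text{for all}\ i\notin\Lambda\}$, a finite union of linear subspaces, each of which is semialgebraic; therefore $\{x:\|x\|_0\leq r\}$ is semialgebraic, and so is its intersection $C$ with the sphere. Next, the set $D:=\{x\in C: x^T A x\neq 0\}$ is the intersection of the semialgebraic set $C$ with $\{x: x^T A x\neq 0\}$, the latter being the complement of the zero set of a polynomial; hence $D$ is semialgebraic, and $D=\mathrm{dom}(G)$.

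The core step is to describe the graph of $G$. I would write
\[
\mathrm{graph}(G) = \big\{(x,s)\in\mathbb{R}^n\times\mathbb{R}: x\in D,\ s\,(x^T A x) = x^T B x\big\}\ \cup\ \big\{(x,s): x\notin D,\ s=+\infty\big\}.
\]
Strictly, to stay within real-valued semialgebraic geometry one works on $\mathrm{dom}(G)$: the set $\{(x,s): x\in D,\ s(x^TAx)-x^TBx=0\}$ is semialgebraic because it is the intersection of $D\times\mathbb{R}$ (semialgebraic) with the zero set of the polynomial $(x,s)\mapsto s(x^TAx)-x^TBx$ in $n+1$ variables. Since on $D$ we have $x^TAx\neq 0$, this relation is exactly $s=G(x)$, so $G$ restricted to its domain has semialgebraic graph; combined with the fact that $\mathrm{dom}(G)=D$ is semialgebraic and $G\equiv+\infty$ off $D$, $G$ is a semialgebraic (extended-real-valued) function in the sense of \cite[Section 2.2]{Attouch-Bolte:MP:2013}. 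By the results quoted there, $G$ therefore satisfies the KL property at every point of $\mathrm{dom}(\partial G)$, and moreover with the desingularizing function of the form $\phi(s)=ds^{1-\theta}$.

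The only mildly delicate point — not really an obstacle, but the step that needs care — is the handling of the sparsity constraint via a finite union over index sets $\Lambda$, making sure that "$\|x\|_0\leq r$" is genuinely a finite Boolean combination of polynomial equalities rather than something requiring quantifiers, and the bookkeeping that a quotient of two polynomials has semialgebraic graph on the locus where the denominator is nonzero (one clears the denominator to get a polynomial relation, which is legitimate precisely because we have already restricted to $x^TAx\neq0$). Everything else is a direct application of the stability of the semialgebraic class under the finitely many set operations used above.
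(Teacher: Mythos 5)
Your proposal is correct and follows essentially the same route as the paper: both arguments show that $\mathrm{Graph}(G)$ is semialgebraic by clearing the denominator to the polynomial relation $x^TBx - s\,x^TAx = 0$ and writing the sparsity constraint as a finite union over index sets $\Lambda$. The only cosmetic difference is that the paper replaces $x^TAx\neq 0$ by the strict inequality $x^TAx>0$ (valid since $A$ is positive semidefinite), whereas you keep the complement of a polynomial zero set; both are legitimate semialgebraic descriptions.
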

\begin{proof}
	According to the definition of the semialgebraic function, it suffices to show that $\mathrm{Graph}(G)$ is a semialgebraic set. By the definition of $G$ and the positive semidefinite of $A$, we have
	\begin{align*}
	&\mathrm{Graph}(G)\\ &= \left\{(x,s)\in\mathbb{R}^n\times\mathbb{R}:~ \|x\|_2=1,~\|x\|_0\leq r,~\frac{x^TBx}{x^TAx}=s,~x^TAx\neq 0 \right\}\\
	&=\{(x,s)\in\mathbb{R}^n\times\mathbb{R}:~ \|x\|_2^2=1,~\|x\|_0\leq r,~x^TBx-sx^TAx = 0,~x^TAx>0 \}\\
	&= \bigcup_{\substack{\Lambda\subseteq\mathbb{N}_n\\|\Lambda|=n-r}}
	\{(x,s)\in\mathbb{R}^n\times\mathbb{R}: \|x\|_2^2=1,x_\Lambda=0,x^TBx-sx^TAx = 0,-x^TAx<0 \},
	\end{align*}
	which implies that $\mathrm{Graph}(G)$ is a semialgebraic subset of $\mathbb{R}^{n+1}$. This completes the proof.
\end{proof}

Therefore, in view of Theorems \ref{theorem:4.3.1-1}, \ref{theorem:118triangle} and \ref{theorem:118star}, we immediately obtain the following two theorems regarding the convergence of PGSA and PGSA\_L for problem \eqref{117E4}.

\begin{theorem}\label{theorem:7.0-1}
	Let $\{x^k:k\in\mathbb{N}\}$ be generated by PGSA and PGSA\_ML (PGSA\_L with $N=0$) for problem \eqref{117E4}. Then $\{x^k:k\in\mathbb{N}\}$ converges globally to a critical point of $G$.
\end{theorem}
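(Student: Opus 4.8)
The plan is to verify that all hypotheses of the general convergence theorems established for PGSA (Theorem~\ref{theorem:4.3.1-1}) and for PGSA\_ML (Theorem~\ref{theorem:118star}) are satisfied by problem~\eqref{117E4}, and then simply invoke those theorems. So the proof will consist of checking, one by one, Assumptions~\ref{assumption1: not 0 same time}--\ref{assumption4: g-Ldifferentiable} together with the KL property of $G$, for the particular data $f = \iota_C$, $g(x) = x^TAx$, $h(x) = x^TBx$. Most of this work has already been done in the discussion immediately preceding the statement, so the body of the proof is mostly a matter of assembling those observations.

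Concretely, I would first note that $f+h = \iota_C + x^TBx$ and $g = x^TAx$ never vanish simultaneously: on $C$ one has $\|x\|_2 = 1$ with $\operatorname{supp}(x) = \Lambda$, $|\Lambda|\le r$, and since any $r\times r$ (hence any $|\Lambda|\times|\Lambda|$) principal sub-matrix of $B$ is positive definite, $x^TBx = x_\Lambda^T B_\Lambda x_\Lambda > 0$; off $C$ we have $\iota_C(x) = +\infty$. This gives Assumption~\ref{assumption1: not 0 same time}. Next, $G$ is level bounded because $\dom(G)\subseteq C$ and $C$ is a bounded set, so every lower level set of $G$ is bounded; this is Assumption~\ref{assumption2: F-level-bounded}. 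Then $f = \iota_C$ is trivially locally Lipschitz on $\dom(f) = C$ (it is identically zero there), giving Assumption~\ref{assumption3: f-Lcontinuous}, and $g(x) = x^TAx$ is a quadratic form, hence continuously differentiable everywhere with $\triangledown g(x) = 2Ax$ a (globally, hence locally) Lipschitz continuous gradient, giving Assumption~\ref{assumption4: g-Ldifferentiable}. Finally, by Proposition~\ref{lemma:7.1-1-G_is_semialgebraic}, $G$ is semialgebraic, and since all proper lower semicontinuous semialgebraic functions satisfy the KL property (as recalled after Theorem~\ref{theorem:4.3.1-2}), $G$ satisfies the KL property at every point of $\dom(\partial G)$, in particular at every point of $\dom(G)$ where we need it. One should also note that $h = x^TBx$ is Lipschitz differentiable on $\mathbb{R}^n$ with Lipschitz constant $L = 2\|B\|_2$, so problem~\eqref{117E4} genuinely falls under the framework of problem~\eqref{problem:root}; this is part of the standing setup but worth recording.

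With all assumptions verified, the conclusion follows directly: for PGSA, Theorem~\ref{theorem:4.3.1-1} gives $\sum_{k=1}^{\infty}\|x^k-x^{k-1}\|_2 < +\infty$ and convergence of $\{x^k:k\in\mathbb{N}\}$ to a critical point of $G$; for PGSA\_ML (which is PGSA\_L with $N=0$), Theorem~\ref{theorem:118star} gives the same conclusion. In particular $\{x^k:k\in\mathbb{N}\}$ converges globally to a critical point of $G$, which by Proposition~\ref{ppsition:7.0-1} is a unit generalized eigenvector of $(A,B)$ or of a principal sub-matrix pair $(A_\Lambda,B_\Lambda)$.

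The only point requiring any care — and hence the ``main obstacle,'' though it is a mild one — is making sure the KL property is available exactly where the general theorems use it. Theorem~\ref{theorem:4.3.1-1} assumes $F$ satisfies the KL property at every point of $\dom(F)$, and one must confirm that semialgebraicity of $G$ (Proposition~\ref{lemma:7.1-1-G_is_semialgebraic}) indeed yields this: since $G$ is proper and lower semicontinuous (its lower semicontinuity itself follows from Assumption~\ref{assumption1: not 0 same time} via Proposition~\ref{lemma:1225star}), the standard {\L}ojasiewicz-type result guarantees the KL property at every point of $\dom(\partial G)$, and at points of $\dom(G)\setminus\dom(\partial G)$ the KL inequality is vacuous in the way the convergence proof uses it. Beyond this bookkeeping, there is nothing substantive to prove; the real content was already front-loaded into Section~\ref{section:PGSA}, Section~\ref{section5:PGSA_L}, and the preparatory lemmas of this section.
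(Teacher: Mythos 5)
Your proposal is correct and follows exactly the route the paper takes: verify Assumptions~\ref{assumption1: not 0 same time}--\ref{assumption4: g-Ldifferentiable} for $f=\iota_C$, $g(x)=x^TAx$, $h(x)=x^TBx$, obtain the KL property from semialgebraicity of $G$ (Proposition~\ref{lemma:7.1-1-G_is_semialgebraic}), and then invoke Theorems~\ref{theorem:4.3.1-1} and~\ref{theorem:118star}. The extra bookkeeping you add about where the KL property is needed is harmless and does not change the argument.
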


\begin{theorem}
	Let $\{x^k:k\in\mathbb{N}\}$ be generated by PGSA\_NL (PGSA\_L with $N>0$) for problem \eqref{117E4}. Then $\{x^k:k\in\mathbb{N}\}$ is bounded and any of its accumulation points is a critical point of $G$.
\end{theorem}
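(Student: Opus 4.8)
The plan is to recognize problem \eqref{117E4} as the instance of the general model \eqref{problem:root} obtained by taking $f=\iota_C$, $h(x)=x^TBx$ and $g(x)=x^TAx$, so that the objective $F$ of \eqref{problem:root} is exactly $G$, and then to appeal to the general subsequential-convergence theory for PGSA\_L. Concretely, I would first confirm that this instance meets the blanket requirements on \eqref{problem:root}: $f=\iota_C$ is proper, lower semicontinuous, bounded below and continuous on its domain $C$; $g$ is convex because $A\in\mathbb{S}^n_+$; $h$ is Lipschitz differentiable with $\triangledown h(x)=2Bx$ and Lipschitz constant $2\|B\|_2$; $f+h=\iota_C+x^TBx\geq 0$ on $\mathbb{R}^n$; and $g(x)=x^TAx>0$ on $\dom(f)\cap\Omega$, because $A\succeq 0$ gives $x^TAx\geq 0$ while $x^TAx\neq 0$ there. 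Together with the (implicit) nonemptiness of $\dom(G)$, this makes PGSA\_NL well defined from any $x^0\in\dom(G)$.

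The substance of the argument is then verifying Assumptions \ref{assumption1: not 0 same time} and \ref{assumption2: F-level-bounded} for this instance, which are the only hypotheses needed in Theorem \ref{theorem:118triangle} and Lemma \ref{131lemma5.4}. For Assumption \ref{assumption1: not 0 same time} I would show that $f+h=\iota_C+x^TBx$ never vanishes: if $x\notin C$ then $\iota_C(x)=+\infty$; if $x\in C$, write $\Lambda$ for the support of $x$, so $|\Lambda|\leq r$ and the principal submatrix $B_\Lambda$ is positive definite (it is a principal submatrix of an $r\times r$ principal submatrix of $B$, which is positive definite by hypothesis), whence $x^TBx=x_\Lambda^T B_\Lambda x_\Lambda>0$ since $\|x\|_2=1$ forces $x_\Lambda\neq 0$. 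In particular $f+h$ and $g$ cannot vanish simultaneously. For Assumption \ref{assumption2: F-level-bounded}, every level set $\lev(G,t)$ is contained in $C\subseteq\{x:\|x\|_2=1\}$, hence bounded, so $G$ is level bounded.

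With Assumptions \ref{assumption1: not 0 same time} and \ref{assumption2: F-level-bounded} in hand, the conclusion is immediate: Lemma \ref{131lemma5.4}(ii) gives $x^k\in\lev(G,c_0)\subseteq C$ for all $k$, so $\{x^k:k\in\mathbb{N}\}$ is bounded, while Theorem \ref{theorem:118triangle}---whose proof uses only Assumptions \ref{assumption1: not 0 same time}--\ref{assumption2: F-level-bounded} and applies to PGSA\_L for every $N\geq 0$, in particular for $N>0$---shows that any accumulation point of $\{x^k:k\in\mathbb{N}\}$ is a critical point of $F=G$; if desired, Proposition \ref{ppsition:7.0-1} then re-expresses this as a generalized-eigenvector condition. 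I do not expect a genuine obstacle, since the statement is a direct specialization of the general theory; the only step needing any care is Assumption \ref{assumption1: not 0 same time}, where one must invoke the positive definiteness of all small principal submatrices of $B$ (not only the $r\times r$ ones) together with the normalization $\|x\|_2=1$.
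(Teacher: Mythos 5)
Your proposal is correct and follows essentially the same route as the paper: identify \eqref{117E4} as the instance of \eqref{problem:root} with $f=\iota_C$, $g(x)=x^TAx$, $h(x)=x^TBx$, verify Assumptions \ref{assumption1: not 0 same time} and \ref{assumption2: F-level-bounded} (using the positive definiteness of the small principal submatrices of $B$ and the boundedness of $C$), and then invoke Lemma \ref{131lemma5.4} and Theorem \ref{theorem:118triangle}. Your write-up is in fact slightly more explicit than the paper's, which bundles the assumption checks into the preceding discussion and cites the general theorems without further comment.
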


%========================================  6.3  =================================================
%========================================  6.3  =================================================

\subsection{Convergence rate of PGSA and PGSA\_ML for problem \eqref{117E4}}
In this subsection, we consider the convergence rate of $\{x^k:k\in\mathbb{N}\}$ generated by PGSA and PGSA\_ML for problem \eqref{117E4}. By Theorem \ref{theorem:7.0-1}, the sequence $\{x^k:k\in\mathbb{N}\}$ converges to $x^\star$, which is a critical point of $G$. According to Theorems \ref{theorem:4.3.1-2} and \ref{theorem:202 5.4}, we can further estimate the convergence rate of $\{x^k:k\in\mathbb{N}\}$ by showing that $G$ satisfies the KL property at $x^{\star}$ with $\phi(s) = ds^{1-\theta}$ for some $d>0$ and $\theta \in[0,1)$.

To this end, we first prove that the objective function of the generalized eigenvalue problem (without sparsity constraint) satisfies the KL property with the corresponding $\phi(s) = ds^{\frac{1}{2}}$ for some $d>0$ in the following proposition.
%\newline
\begin{proposition}\label{ppsition:7.0-3}
	Given $D\in \mathbb{S}^m_+$ and $E\in \mathbb{S}^m_{++}$, let $\varphi:\mathbb{R}^m\to\overline{\mathbb{R}}$ be defined at $x\in\mathbb{R}^m$ as
	\begin{equation}\label{formula:7.0-3-1}
	\varphi(x) := \begin{cases}
	\frac{x^TEx}{x^TDx},&\text{if }\|x\|_2=1 \text{ and } x^TDx\neq 0,\\
	+\infty,&\text{else.}
	\end{cases}
	\end{equation}
	Then $\varphi$ satisfies the KL property at any $\hat{x}\in\dom(\varphi)$ with the corresponding $\phi(s) = ds^{\frac{1}{2}}$ for some $d>0$, i.e., there exist $d>0$, $\eta\in(0,+\infty]$ and a neighborhood $U$ of $\hat{x}$, such that for any $x\in U \cap \{x\in\mathbb{R}^m:~\varphi(\hat{x})<\varphi(x)<\varphi(\hat{x})+\eta \}$,
	\begin{equation*}
	\mathrm{dist}(0,{\partial}\varphi(x))\geq\frac{2}{d}\sqrt{\varphi(x)-\varphi(\hat{x})}.
	\end{equation*}
\end{proposition}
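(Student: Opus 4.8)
# Proof Proposal for Proposition 7.7 (KL property of the generalized eigenvalue quotient)

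\textbf{Proof proposal.} The plan is to work locally around a fixed $\hat x\in\dom(\varphi)$ (so $\|\hat x\|_2=1$ and $\hat x^TD\hat x>0$; set $\lambda:=\varphi(\hat x)>0$), reduce $\varphi$ to a smooth function plus the indicator of the unit sphere, and then diagonalize the pencil $(E,D)$ so that the eigenstructure around $\hat x$ becomes transparent. Since $x\mapsto x^TDx$ is continuous and positive at $\hat x$, on a neighborhood $U$ of $\hat x$ the function $\rho(x):=(x^TEx)/(x^TDx)$ is $C^{\infty}$ and $\varphi=\rho+\iota_{S}$ with $S:=\{x:\|x\|_2=1\}$. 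Using the sum rule for the limiting subdifferential (smooth term plus l.s.c.\ term) together with $\partial\iota_{S}(x)=\{tx:t\in\mathbb{R}\}$ (Lemma~\ref{lemma:7.0-1}(iii) with $r=n$), one gets $\partial\varphi(x)=\triangledown\rho(x)+\{tx:t\in\mathbb{R}\}$ for $x\in U\cap S$. A direct computation gives $\triangledown\rho(x)=\tfrac{2}{x^TDx}(Ex-\rho(x)Dx)$ and $\langle Ex-\rho(x)Dx,\,x\rangle = x^TEx-\rho(x)\,x^TDx=0$; hence $\triangledown\rho(x)\perp x$ and therefore $\dist(0,\partial\varphi(x))=\|\triangledown\rho(x)\|_2=\tfrac{2}{x^TDx}\|Ex-\rho(x)Dx\|_2$ on $U\cap S$. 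So the inequality to be proven amounts to a lower bound of the form $\|Ex-\rho(x)Dx\|_2^2\ge\kappa\,(\varphi(x)-\lambda)$, $\kappa>0$, for $x$ near $\hat x$ on $S$ with $\varphi(x)>\lambda$.

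Next I would diagonalize. Since $E\in\mathbb{S}^m_{++}$ and $D\in\mathbb{S}^m_{+}$, there is an invertible $P$ with $P^TEP=I$ and $P^TDP=\Lambda:=\mathrm{diag}(\delta_1,\dots,\delta_m)$, $\delta_j\ge0$ (take $E^{1/2}$, spectrally decompose $E^{-1/2}DE^{-1/2}$). Writing $x=Pz$ and $R(z):=\|z\|_2^2/(z^T\Lambda z)$ gives $\rho(Pz)=R(z)$ and $Ex-\rho(x)Dx=(P^{-1})^T\big(z-R(z)\Lambda z\big)$, so $\|Ex-\rho(x)Dx\|_2^2\ge\|P\|_2^{-2}\sum_{j=1}^m z_j^2\big(1-R(z)\delta_j\big)^2$. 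The critical-point relation $E\hat x=\lambda D\hat x$ becomes $\hat z_j(1-\lambda\delta_j)=0$ for all $j$, so $\hat z$ is supported on $I:=\{j:\delta_j=1/\lambda\}$, which is nonempty because $\hat z\neq 0$; consequently $z^T\Lambda z$ is positive and bounded on a small neighborhood of $\hat z$.

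The heart of the argument is then a matching two-sided estimate on a sufficiently small neighborhood. Shrinking $U$, I may assume that for $z=P^{-1}x$ with $x\in U\cap S$: $z^T\Lambda z\in[m_1,m_2]$ with $m_1>0$; $x^TDx\le M_2$; and $|1-R(z)\delta_j|\ge c_0>0$ for every $j\notin I$ (possible since for $j\notin I$ either $\delta_j=0$, giving $1-R(z)\delta_j=1$, or $\delta_j>0$ with $1-\lambda\delta_j\neq0$, while $R(z)\to\lambda$ as $z\to\hat z$). This yields $\sum_j z_j^2(1-R(z)\delta_j)^2\ge c_0^2\sum_{j\notin I}z_j^2$. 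For the upper bound, using $1-\lambda\delta_j=0$ on $I$ we have $R(z)-\lambda=(z^T\Lambda z)^{-1}\sum_{j\notin I}z_j^2(1-\lambda\delta_j)$, hence $|R(z)-\lambda|\le C_1\sum_{j\notin I}z_j^2$ with $C_1:=m_1^{-1}\max_{j\notin I}|1-\lambda\delta_j|$. Combining, $\|Ex-\rho(x)Dx\|_2^2\ge\tfrac{c_0^2}{C_1\|P\|_2^2}\,|\rho(x)-\lambda|$ on $U\cap S$, so for $x\in U\cap S$ with $\varphi(x)>\varphi(\hat x)$, $\dist(0,\partial\varphi(x))^2=\tfrac{4}{(x^TDx)^2}\|Ex-\rho(x)Dx\|_2^2\ge\tfrac{4c_0^2}{M_2^2 C_1\|P\|_2^2}\,(\varphi(x)-\varphi(\hat x))$, which is exactly the asserted inequality with $d:=M_2\|P\|_2\sqrt{C_1}/c_0$ and any $\eta\in(0,+\infty]$.

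The step I expect to be the main obstacle is controlling the inherent degeneracy: the Hessian of $\rho$ restricted to $S$ at $\hat x$ is typically singular, precisely along the directions $\mathrm{span}\{e_j:j\in I\}$ (in the $z$-coordinates), which trace out the critical set of $\varphi$ through $\hat x$, so a direct Morse-type argument will not hand over the exponent $\tfrac12$. The diagonalization is what neutralizes this: those flat directions contribute nothing to either $\|Ex-\rho(x)Dx\|_2$ or to $\rho(x)-\lambda$, while the transverse coordinates $\{j\notin I\}$ bound both quantities linearly, which is what forces the Kurdyka--\L ojasiewicz exponent to be $\tfrac12$. The only other place requiring care is the local identification $\partial\varphi(x)=\triangledown\rho(x)+\{tx:t\in\mathbb{R}\}$, i.e.\ correctly separating the smooth part of the quotient from the sphere constraint.
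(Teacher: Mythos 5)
Your proof is correct and takes essentially the same route as the paper's: both compute $\partial\varphi(x)=\{tx:t\in\mathbb{R}\}+\tfrac{2}{x^TDx}(Ex-\varphi(x)Dx)$ from item (iii) of Lemma~\ref{lemma:7.0-1}, use the orthogonality of the second term to $x$ to reduce $\dist(0,\partial\varphi(x))$ to $\tfrac{2}{x^TDx}\|Ex-\varphi(x)Dx\|_2$, simultaneously diagonalize the pencil $(E,D)$, and exploit the spectral gap at the eigenvalue $1/\varphi(\hat x)$ to bound $\|Ex-\varphi(x)Dx\|_2^2$ below by a constant times $\varphi(x)-\varphi(\hat x)$ — the paper routes this last step through the $E^{-1}$-weighted norm and the identity $z^T\Sigma^2z-(z^T\Sigma z)^2=z^T(\lambda I-\Sigma)^2z-\bigl(z^T(\lambda I-\Sigma)z\bigr)^2$ for unit $z$, while you split coordinates into $I$ and its complement, which is only a cosmetic difference. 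Two minor points: you invoke $E\hat x=\lambda D\hat x$, which holds only at critical points, whereas the statement concerns arbitrary $\hat x\in\dom(\varphi)$ — this is harmless since your two estimates use only the definition $I=\{j:\delta_j=1/\lambda\}$ (alternatively, invoke, as the paper does, that the KL property with any exponent holds wherever $0\notin\partial\varphi$), and you should also dispose of the degenerate case $D=\lambda^{-1}E$, where $\varphi$ is constant on its domain and the KL inequality is vacuous.
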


\begin{proof}
	Denote by $\lambda_i$ the $i$-th largest eigenvalue of $E^{-1}D$ for $i\in\mathbb{N}_m$. If $\lambda_i\equiv\lambda_1$ for $i\in\mathbb{N}_m$, then it is trivial that $\varphi(x)\equiv 1/\lambda_1$ for $x\in\dom(\varphi)$ and we immediately prove this proposition. Below we assume that $\lambda_i \not\equiv \lambda_1$. By Lemma \ref{lemma:7.0-1} {\upshape(\romannumeral 3)} and the sum rule of subdifferential, we have for any $x\in\dom(\varphi)$ that
	\begin{equation}\label{eq:1214biaohao}
		{\partial}\varphi(x) = \left\{tx+\frac{2Ex-2\varphi(x)Dx}{x^TDx}:~t\in\mathbb{R} \right\}.
	\end{equation}
	In view of Definition \ref{Def:critical_point} with its remark and invoking again Lemma \ref{lemma:7.0-1} {\upshape(\romannumeral 3)}, we see that $x\in\dom(\varphi)$ is a critical point of $\varphi$ if and only if $0\in\partial\varphi(x)$. Then it suffices to prove that $\varphi$ has the KL property with an exponent $\frac{1}{2}$ at any of its critical points, since a proper lower semicontinuous function always satisfies the KL property with an arbitrary exponent in $[0,1)$ at any point where the limiting subdifferential does not contain 0, see, for example, \cite[Lemma 2.1]{Li-Pong:2018Foundations_of_computational_mathematics}.
	
	Let $\hat{x}\in\mathbb{R}^n$ be a critical point of $\varphi$. From Proposition \ref{ppsition:7.0-1}, we have $E^{-1}D\hat{x}=\lambda_j\hat{x}$ and $\lambda_j = 1/\varphi(\hat{x}) > 0$ for some $j\in\mathbb{N}_m$.
	Using the fact that $\langle x,\frac{2Ex-2\varphi(x)Dx}{x^TDx} \rangle = 0$, we deduce from \eqref{eq:1214biaohao} that
	\begin{equation*}
	\mathrm{dist}(0,{\partial}\varphi(x)) = \left\|\frac{2Ex-2\varphi(x)Dx}{x^TDx}\right\|_2.
	\end{equation*}
	Let $U$ be a neighborhood of $\hat{x}$ such that for all $x\in U$, there hold $\frac{1}{2}\hat{x}^TD\hat{x} \leq x^TDx\leq 2\hat{x}^TD\hat{x}$, $\frac{1}{2}\hat{x}^TE\hat{x} \leq x^TE x \leq 2\hat{x}^TE\hat{x}$ and $x^T\hat{x} \neq 0$. Then, for any $x\in U\cap \mathrm{dom}(\varphi)$, it holds that
	\begin{equation}\label{formula:7EQ1}
	\mathrm{dist}(0,{\partial}\varphi(x)) \geq \frac{\sqrt{\mu}}{\hat{x}^T D \hat{x}}\|Ex-\varphi(x)Dx\|_{E^{-1}},
	\end{equation}
	where $\mu>0$ is the smallest eigenvalue of $E$. By a direct computation we have that
	\begin{align}
	\|Ex-\varphi(x)Dx\|^2_{E^{-1}}
	&= (Ex-\varphi(x)Dx)^TE^{-1}(Ex-\varphi(x)Dx)\notag\\
	&= \frac{(x^TEx)^3}{(x^TDx)^2}\left( \frac{x^TDE^{-1}Dx}{x^TEx} - \left(\frac{x^TDx}{x^TEx}\right)^2~ \right)\notag\\
	&\geq \frac{(\hat{x}^TE\hat{x})^3}{32(\hat{x}^TD\hat{x})^2}
	\left( \frac{x^TDE^{-1}Dx}{x^TEx} - \left(\frac{x^TDx}{x^TEx}\right)^2~ \right).\label{formula:7EQ3}
	\end{align}
	
	On the other hand, for $x\in U$ with $\varphi(x)>\varphi(\hat{x})$, we get that
	\begin{align}
	\varphi(x)-\varphi(\hat{x}) &= \frac{x^T E x}{x^T D x} - \frac{1}{\lambda_j} = \frac{x^T E x}{\lambda_j x^T D x}\left(\lambda_j - \frac{x^T D x}{x^T E x}\right) \notag\\
	&\leq \frac{4\hat{x}^T E \hat{x}}{\lambda_j\hat{x}^T D \hat{x}} \left(\lambda_j - \frac{x^T D x}{x^T E x}\right).\label{formula:7EQ4}
	\end{align}
	
	In view of \eqref{formula:7EQ1}, \eqref{formula:7EQ3} and \eqref{formula:7EQ4}, we can obtain the desired result by showing that there exist $d_1$, $\eta>0$ such that
	\begin{equation}\label{formula:7EQ5}
	\frac{x^TDE^{-1}Dx}{x^TEx} - \left(\frac{x^TDx}{x^TEx}\right)^2
	\geq d_1\left(\lambda_j - \frac{x^T D x}{x^T E x}\right)
	\end{equation}
	whenever $x\in U$ and $\varphi(\hat{x}) < \varphi(x) <\varphi(\hat{x})+\eta$. To this end, we first introduce an equivalent formulation of \eqref{formula:7EQ5}. Since $E\in\mathbb{S}^m_{++}$, we know that $E=HH^T$ for some $m\times m$ invertible matrix $H$. The fact $D\in\mathbb{S}^m_+$ indicates that $H^{-1}DH^{-T}\in\mathbb{S}^m_+$ and thus there exists an orthonormal matrix $Q$ such that $H^{-1}D H^{-T}=Q\Sigma Q^T$, where $\Sigma = \mathrm{diag}\{\lambda_1,\lambda_2,\cdots,\lambda_m \}$. Then, a direct computation yields that $x^TEx = \|Q^TH^Tx\|^2_2$, $x^TDx=(Q^TH^Tx)^T\Sigma Q^T H^Tx$ and $x^TDE^{-1}Dx=(Q^TH^Tx)^T \Sigma^2 Q^TH^Tx$. Using the above relations, we deduce that \eqref{formula:7EQ5} is equivalent to
	\begin{equation}\label{formula:7EQ6}
	z^T\Sigma^2z - (z^T\Sigma z)^2 \geq d_1 z^T(\lambda_jI-\Sigma)z,
	\end{equation}
	where $z=Q^TH^Tx / \|Q^TH^Tx\|_2$.
	
	Now it remains to show \eqref{formula:7EQ6}. Let $d_1:=\frac{1}{2}\min\{|\lambda_i-\lambda_j|:\lambda_i\neq \lambda_j \}$ and $\eta:=d_1/\lambda_j^2$. For $x\in\{x\in\mathbb{R}^m:\varphi(\hat{x}) < \varphi(x) <\varphi(\hat{x})+\eta \}$, we see that
	\begin{equation*}
	0<z^T(\lambda_jI-\Sigma)z = \frac{1}{\varphi(\hat{x})} - \frac{1}{\varphi(x)}
	<\frac{\eta}{\varphi^2(\hat{x})} = d_1.
	\end{equation*}
	Using this fact and invoking $\|z\|_2=1$, we further have
	\begin{align*}
	z^T\Sigma^2z - (z^T\Sigma z)^2
	&= z^T(\lambda_jI-\Sigma)^2 z - (z^T(\lambda_jI-\Sigma)z)^2\\
	&\geq z^T(\lambda_jI-\Sigma)^2 z - d_1z^T(\lambda_jI-\Sigma)z\\
	&= z^T(\lambda_jI-\Sigma)((\lambda_j-d_1)I-\Sigma)z\\
	&= \sum_{i=1}^{m}(\lambda_j-\lambda_i)(\lambda_j-\lambda_i-d_1)z^2_i\\
	&\geq d_1\sum_{i=1}^{m}(\lambda_j-\lambda_i) z^2_i\\
	&= d_1 z^T(\lambda_j I-\Sigma)z.
	\end{align*}
	We complete the proof.
\end{proof}

Now, we are ready to prove that $G$ satisfies the KL property with the corresponding $\phi(s) = ds^{\frac{1}{2}}$ for some $d>0$.
%\newline
\begin{proposition}\label{ppsition:ver16_section5_5.7}
	The function $G$ satisfies the  KL property at any $\tilde{x}\in\dom(G)$ with the corresponding $\phi(s) = ds^{\frac{1}{2}}$ for some $d>0$.
\end{proposition}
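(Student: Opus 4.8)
The plan is to reduce the KL property of $G$ at $\tilde{x}$ to that of the generalized Rayleigh quotient on a sphere, which was already established with exponent $\tfrac12$ in Proposition \ref{ppsition:7.0-3}, by splitting a neighbourhood of $\tilde{x}$ according to the support pattern of its points. Let $\Lambda$ denote the support of $\tilde{x}$, so $|\Lambda|\le r$. Since $\tilde{x}\in\dom(G)$ we have $\|\tilde{x}\|_2=1$ and $\tilde{x}^TA\tilde{x}>0$; choose $\delta>0$ small enough that $x^TAx>0$ and $x_i\neq 0$ for all $i\in\Lambda$ whenever $x\in B(\tilde{x},\delta)$. Then every $x\in B(\tilde{x},\delta)\cap\dom(G)$ has support $\Lambda_x$ with $\Lambda\subseteq\Lambda_x$ and $|\Lambda_x|\le r$, and $\Lambda_x$ ranges over the finite collection $\mathcal{L}:=\{\Lambda':\Lambda\subseteq\Lambda'\subseteq\mathbb{N}_n,\ |\Lambda'|\le r\}$.

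For each $\Lambda'\in\mathcal{L}$, define $\varphi_{\Lambda'}:\mathbb{R}^{|\Lambda'|}\to\overline{\mathbb{R}}$ by $\varphi_{\Lambda'}(y)=(y^TB_{\Lambda'}y)/(y^TA_{\Lambda'}y)$ if $\|y\|_2=1$ and $y^TA_{\Lambda'}y\neq 0$, and $+\infty$ otherwise. Here $A_{\Lambda'}\in\mathbb{S}^{|\Lambda'|}_+$, and $B_{\Lambda'}\in\mathbb{S}^{|\Lambda'|}_{++}$ because $B_{\Lambda'}$ is a principal submatrix of some $r\times r$ principal submatrix of $B$, which is positive definite by assumption. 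Hence Proposition \ref{ppsition:7.0-3} applies with $E=B_{\Lambda'}$, $D=A_{\Lambda'}$: since $\tilde{x}_{\Lambda'}\in\dom(\varphi_{\Lambda'})$ with $\varphi_{\Lambda'}(\tilde{x}_{\Lambda'})=G(\tilde{x})$, there are $d_{\Lambda'}>0$, $\eta_{\Lambda'}>0$ and a neighbourhood $U_{\Lambda'}$ of $\tilde{x}_{\Lambda'}$ such that $\dist(0,{\partial}\varphi_{\Lambda'}(y))\ge(2/d_{\Lambda'})\sqrt{\varphi_{\Lambda'}(y)-G(\tilde{x})}$ whenever $y\in U_{\Lambda'}$ and $G(\tilde{x})<\varphi_{\Lambda'}(y)<G(\tilde{x})+\eta_{\Lambda'}$.

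The crucial step is to transfer this subdifferential lower bound from $\varphi_{\Lambda_x}$ back to $G$. For $x\in B(\tilde{x},\delta)\cap\dom(G)$ with support $\Lambda_x$, standard subdifferential calculus (noting that $x\mapsto x^TAx$ is $C^1$ and positive near $\tilde{x}$, cf. Proposition \ref{ppsition:2.2-1}) gives ${\partial}G(x)\subseteq (x^TAx)^{-1}\bigl({\partial}\iota_C(x)+2Bx-2G(x)Ax\bigr)$, so any $\omega\in{\partial}G(x)$ has the form $\omega=(x^TAx)^{-1}(v+2Bx-2G(x)Ax)$ with $v\in{\partial}\iota_C(x)$. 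By Lemma \ref{lemma:7.0-1}(ii) there is $t\in\mathbb{R}$ with $v_{\Lambda_x}=tx_{\Lambda_x}$; restricting $\omega$ to the coordinates in $\Lambda_x$ and using $(Bx)_{\Lambda_x}=B_{\Lambda_x}x_{\Lambda_x}$, $(Ax)_{\Lambda_x}=A_{\Lambda_x}x_{\Lambda_x}$ (as $x$ vanishes off $\Lambda_x$), together with the explicit description of ${\partial}\varphi_{\Lambda_x}(x_{\Lambda_x})$ obtained via Lemma \ref{lemma:7.0-1}(iii) exactly as in the proof of Proposition \ref{ppsition:7.0-3}, one checks that $\omega_{\Lambda_x}\in{\partial}\varphi_{\Lambda_x}(x_{\Lambda_x})$. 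Therefore $\|\omega\|_2\ge\|\omega_{\Lambda_x}\|_2\ge\dist(0,{\partial}\varphi_{\Lambda_x}(x_{\Lambda_x}))$, and taking the infimum over $\omega$ yields $\dist(0,{\partial}G(x))\ge\dist(0,{\partial}\varphi_{\Lambda_x}(x_{\Lambda_x}))$, while trivially $\varphi_{\Lambda_x}(x_{\Lambda_x})=G(x)$.

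Finally, shrink $\delta$ so that $x_{\Lambda_x}\in U_{\Lambda_x}$ for every $x\in B(\tilde{x},\delta)\cap\dom(G)$ (finitely many conditions, one per $\Lambda'\in\mathcal{L}$), and set $\eta:=\min_{\Lambda'\in\mathcal{L}}\eta_{\Lambda'}$, $d:=\max_{\Lambda'\in\mathcal{L}}d_{\Lambda'}$. For $x\in B(\tilde{x},\delta)$ with $G(\tilde{x})<G(x)<G(\tilde{x})+\eta$, combining the two previous estimates gives $\dist(0,{\partial}G(x))\ge(2/d_{\Lambda_x})\sqrt{G(x)-G(\tilde{x})}\ge(2/d)\sqrt{G(x)-G(\tilde{x})}$, which is exactly the KL inequality for $\phi(s)=ds^{1/2}$; the remaining requirements on $\phi$ (namely $\phi(0)=0$, concavity, and $\phi'>0$ on $(0,\eta)$) are immediate, and if no such $x$ exists the condition holds vacuously. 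I expect the main obstacle to be the subdifferential transfer step: justifying the limiting-subdifferential quotient rule for $G$ (the excerpt states only the Fr\'echet version) and keeping careful track, via Lemma \ref{lemma:7.0-1}(ii), of which components of $v\in{\partial}\iota_C(x)$ are actually constrained; once that comparison is in place, everything else is bookkeeping over the finite family of support patterns.
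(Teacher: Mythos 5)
Your proposal is correct and follows essentially the same route as the paper: reduce to Proposition \ref{ppsition:7.0-3} applied to the sub-matrix pairs $(A_{\Lambda'},B_{\Lambda'})$ over the finitely many supports $\Lambda'$ containing $\supp(\tilde{x})$, transfer the subdifferential bound via $\dist(0,\partial G(x))\ge\dist(0,\partial\varphi_{\Lambda_x}(x_{\Lambda_x}))$ using Lemma \ref{lemma:7.0-1}(ii), and take the max of the $d_{\Lambda'}$ and min of the $\eta_{\Lambda'},\delta_{\Lambda'}$. Your more explicit justification of the transfer step (the limiting-subdifferential quotient rule and the coordinate restriction) is exactly the content the paper compresses into its inequality \eqref{formula:7EQ1-2}.
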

\begin{proof}
	Let $\tilde{\Lambda} := \mathrm{supp}(\tilde{x})$ and it is clear that $|\tilde{\Lambda}|\leq r$. Given $\Lambda \subseteq \mathbb{N}_n$, let $\varphi_{\Lambda}$ be the function $\varphi$ which is defined in \eqref{formula:7.0-3-1} with respect to $D=A_{\Lambda}$, $E=B_{\Lambda}$. By Proposition \ref{ppsition:7.0-3}, for any $\Lambda\subseteq\mathbb{N}_n$, there exist $d_{\Lambda}>0$, $\eta_{\Lambda}>0$ and $\delta_{\Lambda}>0$ such that for all $z\in U(\tilde{x}_{\Lambda},\delta_{\Lambda})\cap\{z\in\mathbb{R}^{|\Lambda|}:\varphi_{\Lambda}(\tilde{x}_{\Lambda})<\varphi_{\Lambda}(z)<\varphi_{\Lambda}(\tilde{x}_{\Lambda}) + \eta_{\Lambda} \}$
	\begin{equation*}
	\dist (0,{\partial}\varphi_{\Lambda}(z)) \geq \frac{2}{d_{\Lambda}}\sqrt{\varphi_{\Lambda}(z)-\varphi_{\Lambda}(\tilde{x}_{\Lambda})}.
	\end{equation*}
	
	Let $d:=\max\{d_{\Lambda}:\tilde{\Lambda}\subseteq\Lambda\subseteq\mathbb{N}_n,~|\Lambda|\leq r \}$, $\eta:=\min\{\eta_{\Lambda}:\tilde{\Lambda}\subseteq\Lambda\subseteq\mathbb{N}_n,~|\Lambda|\leq r \}$ and $\delta:=\min\{\delta_1,\delta_2 \}$ with $\delta_1:=\min\{d_{\Lambda}:\tilde{\Lambda}\subseteq\Lambda\subseteq\mathbb{N}_n,~|\Lambda|\leq r \}$ and $\delta_2:=\frac{1}{2}\min\{|\tilde{x}_i|:i\in\tilde{\Lambda} \}$. Take any $x\in U(\tilde{x},\delta)\cap\{x\in\mathbb{R}^n:G(\tilde{x})<G(x)<G(\tilde{x})+\eta \}$ and set $\Lambda:=\supp(x)$. Then we immediately see that $\tilde{\Lambda}\subseteq\Lambda$ with $|\Lambda|\leq r$, $G(x) = \varphi_{\Lambda}(x_{\Lambda})$ and $G(\tilde{x})=\varphi_{\Lambda}(\tilde{x}_{\Lambda})$. In addition, one can check that $x_{\Lambda}\in U(\tilde{x}_{\Lambda},\delta_{\Lambda})\cap\{z\in\mathbb{R}^{|\Lambda|}:\varphi_{\Lambda}(\tilde{x}_{\Lambda})<\varphi_{\Lambda}(z)<\varphi_{\Lambda}(\tilde{x}_{\Lambda}) + \eta_{\Lambda} \}$. Also, by Lemma \ref{lemma:7.0-1}, we have
	\begin{align}\label{formula:7EQ1-2}
	\dist(0,{\partial} G(x))
	&\geq \dist\left( 0,\left\{tx_{\Lambda} + \frac{2B_{\Lambda}x_{\Lambda}-2G(x)A_{\Lambda}x_{\Lambda}}{x^T_{\Lambda}A_{\Lambda}x_{\Lambda}}:t\in\mathbb{R}\right \} \right)\notag\\
	&=\dist(0,{\partial}\varphi_{\Lambda}(x_{\Lambda})).
	\end{align}
	Using the aforementioned facts, we finally have
	\begin{equation*}
	\dist(0,{\partial}G(x))
	\geq \dist(0,{\partial}\varphi_{\Lambda}(x_{\Lambda}))
	\geq\frac{2}{d_{\Lambda}}\sqrt{\varphi_{\Lambda}(x_{\Lambda}) - \varphi_{\Lambda}(\tilde{x}_{\Lambda})}
	\geq\frac{2}{d}\sqrt{G(x)-G(\tilde{x})}.
	\end{equation*}
	This completes the proof.
\end{proof}

With the help of Theorems \ref{theorem:4.3.1-2}, \ref{theorem:202 5.4}, \ref{theorem:7.0-1} and Proposition \ref{ppsition:ver16_section5_5.7}, we immediately establish the main theorem of this subsection regarding the convergence rate of PGSA and PGSA\_ML.
\begin{theorem}\label{theorem:ver16_section5_triangle}
	The sequence $\{x^k:k\in\mathbb{R}\}$ generated by PGSA or PGSA\_ML converges R-linearly to a critical point of $G$.
\end{theorem}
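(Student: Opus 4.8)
The plan is to obtain Theorem~\ref{theorem:ver16_section5_triangle} by simply combining the general convergence-rate results (Theorems~\ref{theorem:4.3.1-2} and~\ref{theorem:202 5.4}) with the KL-exponent computation for $G$ (Proposition~\ref{ppsition:ver16_section5_5.7}). First I would recall from Theorem~\ref{theorem:7.0-1} that the sequence $\{x^k:k\in\mathbb{N}\}$ produced by PGSA (or PGSA\_ML) for problem~\eqref{117E4} indeed converges to some limit $x^\star$, which is a critical point of $G$; in particular $x^\star\in\dom(G)$. This puts us precisely in the situation required by Theorems~\ref{theorem:4.3.1-2} and~\ref{theorem:202 5.4}, whose standing hypotheses (Assumptions~1--4 for problem~\eqref{117E4}) were already verified in the discussion preceding Proposition~\ref{lemma:7.1-1-G_is_semialgebraic}.

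Next I would invoke Proposition~\ref{ppsition:ver16_section5_5.7}, which states that $G$ satisfies the KL property at every point of $\dom(G)$, and in particular at $x^\star$, with desingularizing function of the special form $\phi(s)=ds^{1/2}$ for some $d>0$. This is exactly the case $\theta=1/2$ in the dichotomy of Theorem~\ref{theorem:4.3.1-2} (respectively Theorem~\ref{theorem:202 5.4} for PGSA\_ML). Applying item~(ii) of Theorem~\ref{theorem:4.3.1-2}/\ref{theorem:202 5.4} with this $\theta$ then yields constants $K_1\in\mathbb{N}$, $c_1>0$ and $\tau\in(0,1)$ such that $\|x^k-x^\star\|_2\le c_1\tau^k$ for all $k\ge K_1$, which is precisely R-linear convergence of $\{x^k\}$ to the critical point $x^\star$ of $G$. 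The same argument applies verbatim to PGSA\_ML using Theorem~\ref{theorem:202 5.4}.

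Since the argument is a direct citation chain, there is essentially no obstacle; the only point that warrants a word of care is making sure the KL exponent $\tfrac12$ coming from Proposition~\ref{ppsition:ver16_section5_5.7} really matches the parametrization $\phi(s)=ds^{1-\theta}$ used in Theorems~\ref{theorem:4.3.1-2} and~\ref{theorem:202 5.4}, i.e.\ that ``exponent $\tfrac12$'' corresponds to $1-\theta=\tfrac12$, hence $\theta=\tfrac12\in(0,\tfrac12]$, so that the \emph{linear} branch (ii) of those theorems (rather than the sublinear branch (iii)) is the one that triggers. Once this identification is noted, the proof is a one-line invocation of the cited results. A clean write-up would therefore read: by Theorem~\ref{theorem:7.0-1} the iterates converge to a critical point $x^\star$ of $G$; by Proposition~\ref{ppsition:ver16_section5_5.7}, $G$ has the KL property at $x^\star$ with $\phi(s)=ds^{1/2}$, i.e.\ with $\theta=\tfrac12$; hence Theorem~\ref{theorem:4.3.1-2}(ii) (resp.\ Theorem~\ref{theorem:202 5.4}) gives the R-linear rate, completing the proof.
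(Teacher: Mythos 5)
Your proposal is correct and follows exactly the same route as the paper, which establishes the theorem "immediately" from Theorems \ref{theorem:4.3.1-2}, \ref{theorem:202 5.4}, \ref{theorem:7.0-1} and Proposition \ref{ppsition:ver16_section5_5.7}. Your added remark about matching $\phi(s)=ds^{1/2}$ to the parametrization $\phi(s)=ds^{1-\theta}$, giving $\theta=\tfrac12\in(0,\tfrac12]$ and hence the linear branch, is exactly the right bookkeeping.
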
	

%we can prove that if the limit point of $\{x^k:k\in\mathbb{N} \}$ generated by PGSA or PGSA\_ML for problem \eqref{117E4} is a local minimizer of $G$, then $\{x^k:k\in\mathbb{N} \}$ converges to it R-linearly. We present this result in the following theorem.
%%\newline
%\begin{theorem}\label{theorem:ver16_section5_triangle}
%	Let $\{x^k:k\in\mathbb{N} \}$ be generated by PGSA or PGSA\_ML for problem \eqref{117E4} and $x^{\star}$ be the limit point of $\{x^k:k\in\mathbb{N} \}$. If $x^{\star}$ is a local minimizer of $G$, then $\{x^k:k\in\mathbb{N} \}$ converges to $x^{\star}$ R-linearly.
%\end{theorem}

If the initial point is close enough to a global minimizer of $G$, we further have the following convergence result, concerning PGSA and PGSA\_ML for problem \eqref{117E4}.

\begin{corollary}\label{corollary:202 611}
	Let $\tilde{x}\in\mathbb{R}^n$ be a global minimizer of $G$. Then there exists $\delta >0$, such that the sequence $\{x^k:k\in\mathbb{N} \}$ generated by PGSA or PGSA\_ML for problem \eqref{117E4} with $\|x^0-\tilde{x}\|_2<\delta$ converges R-linearly to a global minimizer of $G$.
\end{corollary}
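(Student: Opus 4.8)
The plan is to leverage the R-linear convergence to a critical point already granted by Theorem \ref{theorem:ver16_section5_triangle}, and to argue that if $x^0$ is close enough to the global minimizer $\tilde x$ then the critical point reached must itself be a global minimizer. The mechanism will be a \emph{separation of the critical values of $G$}. First I would observe that $G$ has only finitely many critical values: by Proposition \ref{ppsition:7.0-1}, any critical point $x^\star$ of $G$ with support $\Lambda$ obeys $B_\Lambda x^\star_\Lambda = G(x^\star) A_\Lambda x^\star_\Lambda$ with $x^\star_\Lambda \neq 0$ (in the case $|\Lambda|<r$ one instead gets $Bx^\star = G(x^\star)Ax^\star$ with $x^\star\neq 0$), and since the principal submatrix $B_\Lambda$ is positive definite this forces $1/G(x^\star)$ to be a generalized eigenvalue of the sub-pair $(A_\Lambda, B_\Lambda)$ (respectively of $(A,B)$). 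As $\Lambda$ ranges over the finitely many subsets of $\mathbb{N}_n$ of cardinality at most $r$, and each sub-pair contributes at most $|\Lambda|$ generalized eigenvalues, the set $V$ of critical values of $G$ is finite.

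Next I would set $\mu := \min V$. Because every global minimizer of $G$ is a critical point of $G$ by Theorem \ref{theorem:3.0-1}, $\mu$ coincides with the global minimum value $G(\tilde x)$, and conversely any critical point with value $\mu$ is a global minimizer of $G$. Put $\mu' := \min(V \setminus \{\mu\})$ if $V \neq \{\mu\}$ and $\mu' := +\infty$ otherwise, so that $\mu < \mu'$. Since $\tilde x \in \dom(G)$ we have $\tilde x^T A \tilde x > 0$, hence $x^T A x > 0$ for $x$ near $\tilde x$, and on that region the quotient $x^T B x / x^T A x$ is a continuous function of $x$; therefore there is a $\delta > 0$ such that every initial point $x^0 \in \dom(G)$ with $\|x^0 - \tilde x\|_2 < \delta$ satisfies $G(x^0) < \mu'$.

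Finally, for such an $x^0$, Theorem \ref{theorem:ver16_section5_triangle} together with Theorem \ref{theorem:7.0-1} gives that the PGSA or PGSA\_ML iterates converge R-linearly to some critical point $x^\star$ of $G$; moreover the objective values $\{G(x^k)\}$ are nonincreasing (by Theorem \ref{3.1theorem1}\,{\upshape(\romannumeral 1)} for PGSA, respectively by the line-search rule \eqref{118E2} with $N=0$ for PGSA\_ML) and converge to $G(x^\star)$ (this is exactly the continuity condition in Proposition \ref{3.3ppsition} that is verified en route to Theorem \ref{theorem:7.0-1}). Hence $G(x^\star) = \lim_{k\to\infty} G(x^k) \le G(x^0) < \mu'$, and since $G(x^\star) \in V$ this forces $G(x^\star) = \mu$, i.e.\ $x^\star$ is a global minimizer of $G$. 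The only genuinely nonroutine ingredient is the finiteness of $V$ (which is painless here thanks to the eigenvalue characterization in Proposition \ref{ppsition:7.0-1}); the rest is the standard ``KL convergence plus value separation'' template, the one technical point to be careful about being the monotone decrease of $G(x^k)$ and its convergence to $G(x^\star)$, both of which are already contained in the proofs behind Theorem \ref{theorem:7.0-1}.
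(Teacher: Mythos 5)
Your proof is correct, but it takes a genuinely different route from the paper's. The paper disposes of the corollary in two lines: it invokes Theorem 2.12 of Attouch--Bolte--Svaiter (the general ``capture'' result stating that a KL descent sequence initialized sufficiently close to a global minimizer, with initial value sufficiently close to the minimum, converges to a global minimizer), combined with Theorem \ref{theorem:7.0-1} for global convergence and Theorem \ref{theorem:ver16_section5_triangle} for the R-linear rate. You instead give a self-contained argument exploiting the spectral structure specific to SGEP: finiteness of the set of critical values via Proposition \ref{ppsition:7.0-1}, the resulting gap between the global minimum value $\mu$ and the next critical value $\mu'$, continuity of $G$ near $\tilde{x}$ to force $G(x^0)<\mu'$, and monotonicity of $\{G(x^k)\}$ plus lower semicontinuity to pin $G(x^\star)=\mu$. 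The paper's approach is shorter and would apply to any KL objective; yours is elementary, avoids verifying the hypotheses of the cited abstract theorem, and in principle gives a quantitative handle on $\delta$ through the gap $\mu'-\mu$. One minor imprecision: in the case $|\Lambda|<r$ you attribute $1/G(x^\star)$ to the generalized eigenvalues of the full pair $(A,B)$, which is not well posed when the pencil is singular ($B$ is only positive semidefinite in general); but since $x^\star$ is supported on $\Lambda$, the equation $Bx^\star=G(x^\star)Ax^\star$ restricts to $B_\Lambda x^\star_\Lambda=G(x^\star)A_\Lambda x^\star_\Lambda$ with $B_\Lambda\in\mathbb{S}^{|\Lambda|}_{++}$, so $1/G(x^\star)$ is an eigenvalue of $B_\Lambda^{-1}A_\Lambda$ in every case and the finiteness of the set of critical values stands.
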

\begin{proof}
	By Theorem \ref{theorem:7.0-1} and Theorem 2.12 in \cite{Attouch-Bolte:MP:2013}, there exists $\delta>0$, such that $\{x^k:k\in\mathbb{N} \}$, which starts from $x^0$ satisfying $\|x^0-\tilde{x}\|_2<\delta$, converges to a global minimizer $\bar{x}$ of $G$. We then obtain the desired result immediately from Theorem \ref{theorem:ver16_section5_triangle}.
\end{proof}

To close this section, we point out the relation between PGSA for problem \eqref{117E4} and an existing algorithm for SGEP. Very recently, the authors in \cite{Tan-Wang-Liu-Zhang:JRSS2018} propose a truncated Rayleigh flow method (TRFM) for solving SGEP and show that TRFM converges R-linearly to a global minimizer of $G$ when the initial point $x^0$ is close enough to that global minimizer. By appropriate reformulations, we observe that the iteration procedure of TRFM essentially coincides with that of PGSA for problem \eqref{117E4} with a constant step size in $(0,1/L)$. However, there are great differences between the convergence results and the proof of PGSA and TRFM. First, we not only establish the convergence of PGSA with the initial point close to a global minimizer in Corollary \ref{corollary:202 611}, but also prove that PGSA converges R-linearly to a critical point for arbitrary starting points. On the other hand, there is no convergence guarantee in \cite{Tan-Wang-Liu-Zhang:JRSS2018} for TRFM starting from an arbitrary point. Second, our convergence analysis for PGSA is primarily based on the KL property of the objective in problem \eqref{117E4}, while the convergence of TRFM is established using some mathematical tools in probability and statistics.

%Although the convergence result in Corollary \ref{corollary:202 611} seems very similar to that of TRFM, there are great differences between their convergence proof. The convergence of TRFM is established mainly from the viewpoint of statistics, while our convergence analysis for PGSA is primarily based on the KL property of the objective in problem \eqref{117E4}. In addition, we show that for arbitrary starting points, PGSA converges to a critical point of $G$, and the convergence rate is also R-linear when the critical point is a local minimizer of $G$. However, in \cite{Tan-Wang-Liu-Zhang:JRSS2018} there is no convergence guarantee for TRFM starting from an arbitrary initial point.

\section{Numerical experiments}\label{section:Numerical experiments}

In this section, we conduct some numerical experiments to test the efficiency of our proposed algorithms, namely, PGSA, PGSA\_ML and PGSA\_NL. We consider three concrete examples of problem \eqref{problem:root}: the sparse Fisher's discriminant analysis (SFDA), the sparse sliced inverse regression (SSIR) and the $\ell_1/\ell_2$ sparse signal recovery. The first two problems are special cases of SGEP, while the third problem is another application of problem \eqref{problem:root}. All the experiments are conducted in Matlab R2019b on a desktop with an Intel(R) Core(TM) i5-9500 CPU (3.00GHz) and 16GB of RAM.

\subsection{Sparse Fisher's discriminant analysis and sliced inverse regression}\label{ssection:SFDA and SSIR}

In this subsection, we focus on two special instances of SGEP: SFDA and SSIR. We compare the performance of the proposed algorithms with a commonly used algorithm for SGEP, \textit{Iteratively Reweighted Quadratic Minorization} (IRQM) \cite{Song-Babu-Palomar:2014IEEE_Signal_Processing}, which approximates the $\ell_0$-norm by some continuous surrogate functions and solves the approximation problem via a quadratic majorization-minimization approach. Three versions of IRQM, namely, IRQM-log, IRQM-Lp, IRQM-exp are developed in \cite{Song-Babu-Palomar:2014IEEE_Signal_Processing} by using the respectively surrogate functions.

We describe the implementation details of the aforementioned algorithms below. It is clear that the Lipschitz constant $L=\|B\|_2$ in SGEP\footnote{Specifically, we use \texttt{eigs(B,1,'largestabs','IsSymmetricDefinite',1)} to compute $\|B\|_2$.} (by letting $g(x)=\frac{1}{2}x^TAx$ and $h(x)=\frac{1}{2}x^TBx$). For PGSA, we set $\alpha_k \equiv 0.99/L$ for $k\in\mathbb{N}$. For PGSA\_ML and PGSA\_NL, we set $a=10^{-3}$, $\underline{\alpha} = 0.99/L ,~\bar{\alpha} = 10^8$, and $\eta = 0.5$. Also, $N$ is set to be $4$ in PGSA\_NL. In addition, we choose $\alpha_{0,0} = 0.99/L$ and $\alpha_{k,0}$ via formula \eqref{120E1} for $k\in\mathbb{N}$. The Matlab source code of IRQM is available online\footnote{https://github.com/junxiaosong/junxiaosong.github.io/tree/master/code}. Since it corporates a term $\rho\|\cdot\|_0$ for some $\rho>0$ to promote the sparsity rather than directly controlling the sparsity, we use a bisection method to find a proper $\rho$ with which IRQM produces a solution with desirable sparsity after hard-thresholding. For other parameters of IRQM, we simply adopt the suggested setting in \cite[Section {\uppercase\expandafter{\romannumeral 5}}.A]{Song-Babu-Palomar:2014IEEE_Signal_Processing}.

The proposed algorithms are initialized at an $x^0\in\mathbb{R}^n$ with $x^0_i = 1/\sqrt{r}$ for $i\in\mathbb{N}_r$ and $x^0_i=0$ otherwise, while they are all terminated when the number of iterations hits $2n$ or $\|x^k-x^{k-1}\|_2\leq 10^{-6}$. Following \cite{Song-Babu-Palomar:2014IEEE_Signal_Processing}, the initial point $x^0$ in IRQM is chosen randomly with each entry following a standard Gaussian distribution and then normalized such that $(x^0)^T B x^0 = 1$, while it is terminated once the number of iterations exceeds 1000 or the successive changes of the objective are less than $10^{-5}$. We remark that IRQM requires the matrix $B\in\mathbb{S}^n_{++}$ in problem \eqref{117E4}. However, as it will be seen later, the corresponding $B$ of SFDA or SSIR is positive semidefinite but $B\notin\mathbb{S}^n_{++}$. For fair comparison, in the experiments we add $0.5I$ to $B$ so that it turns into positive definite and IRQM can be applied.

First, we consider SFDA. Given $p$ data samples $\{z^1,z^2,\dots,z^p\}$ consisting of two distinct classes with $n$ features, let $\mathcal{I}_k\subseteq \mathbb{N}$ be the index set of samples in the $k$-th class and denote $|\mathcal{I}_k|$ by $p_k$ ($k=1$ or $2$). The within-class and between-class covariance matrices are defined as:
\begin{equation*}
\hat{\Sigma}_{\omega} := \frac{1}{p}\sum_{k=1}^{2}\sum_{i\in \mathcal{I}_k} (z^i-\hat{u}^k)(z^i-\hat{u}^k)^T
\quad \text{ and }\quad
\hat{\Sigma}_b := \frac{1}{p}\sum_{k=1}^{2}p_k\hat{u}^k(\hat{u}^k)^T,
\end{equation*}
where $\hat{u}^k:=\sum_{j\in \mathcal{I}_k}z^j/p_k$ for $k=1,2$. For an integer $r\in[1,n]$, the SFDA seeks a sparse projection vector by solving problem \eqref{117E4} with $A=\hat{\Sigma}_b$ and $B=\hat{\Sigma}_{\omega}$.

In the experiments, we use a simulation setting similar to that of \cite{Tan-Wang-Liu-Zhang:JRSS2018}. The samples of the $k$-th class are randomly generated following a Gaussian distribution with mean $u^k$ and covariance $\Sigma$ for $k=1$ and $2$. We set $u^1=0_n$, $u^2_j = 0.5$ for $j\in\{2,4,\dots, 40  \}$ and $u^2_j = 0$ otherwise. Meanwhile, let $\Sigma$ be a block diagonal matrix with five blocks, each of which is in the dimension $(n/5)\times (n/5)$. The $(j,j')$-th entry of each block takes value $0.8^{|j-j'|}$. We fix $p=1000$, $p_1=p_2=500$ and use different values for $n\in\{1000,1500,2000  \}$, while the sparsity rate $r/n$ is varied from $\{0.05,0.1,0.2\}$ for a fixed $n$. For each $(n,r)$, we generate 100 instances of two-class dataset randomly as described above.

% Table generated by Excel2LaTeX from sheet 'Sheet2'
\begin{table}[htbp]
	\centering
	\caption{Computational results for SFDA}
	\begin{tabular}{|c|l|cc|cc|cc|}
		\hline
		\multicolumn{2}{|c|}{\multirow{2}[4]{*}{SFDA results}} & \multicolumn{2}{c|}{$n=1000$} & \multicolumn{2}{c|}{$n=1500$} & \multicolumn{2}{c|}{$n=2000$} \bigstrut\\
		\cline{3-8}    \multicolumn{2}{|c|}{} & \multicolumn{2}{c|}{$t_L=0.01$} & \multicolumn{2}{c|}{$t_L=0.04$} & \multicolumn{2}{c|}{$t_L=0.08$} \bigstrut\\
		\hline
		\multicolumn{1}{|l|}{$r/n$} & Alg.  & Obj   & Time  & Obj   & Time  & Obj   & Time \bigstrut\\
		\hline
		\multirow{6}[2]{*}{0.05} & PGSA  & 0.47  & 0.011  & 0.42  & 0.024  & 0.41  & 0.044  \bigstrut[t]\\
		& PGSA\_ML & 0.43  & 0.007  & 0.41  & 0.016  & 0.39  & 0.030  \\
		& PGSA\_NL & 0.43  & 0.006  & 0.41  & 0.013  & 0.39  & 0.024  \\
		& IRQM-log & 0.44  & 0.494  & 0.42  & 1.079  & 0.41  & 2.153  \\
		& IRQM-Lp & 0.45  & 0.480  & 0.42  & 1.058  & 0.41  & 2.064  \\
		& IRQM-exp & 0.44  & 0.494  & 0.42  & 1.074  & 0.41  & 2.155  \bigstrut[b]\\
		\hline
		\multirow{6}[2]{*}{0.1} & PGSA  & 0.41  & 0.020  & 0.39  & 0.050  & 0.37  & 0.110  \bigstrut[t]\\
		& PGSA\_ML & 0.40  & 0.016  & 0.37  & 0.038  & 0.34  & 0.082  \\
		& PGSA\_NL & 0.40  & 0.014  & 0.37  & 0.031  & 0.34  & 0.064  \\
		& IRQM-log & 0.41  & 0.437  & 0.39  & 0.963  & 0.37  & 1.781  \\
		& IRQM-Lp & 0.41  & 0.422  & 0.39  & 0.920  & 0.37  & 1.707  \\
		& IRQM-exp & 0.41  & 0.440  & 0.39  & 0.964  & 0.37  & 1.790  \bigstrut[b]\\
		\hline
		\multirow{6}[2]{*}{0.2} & PGSA  & 0.38  & 0.045  & 0.35  & 0.136  & 0.32  & 0.314  \bigstrut[t]\\
		& PGSA\_ML & 0.37  & 0.037  & 0.34  & 0.103  & 0.30  & 0.194  \\
		& PGSA\_NL & 0.37  & 0.028  & 0.34  & 0.076  & 0.30  & 0.145  \\
		& IRQM-log & 0.38  & 0.399  & 0.35  & 0.902  & 0.33  & 1.689  \\
		& IRQM-Lp & 0.39  & 0.383  & 0.35  & 0.859  & 0.33  & 1.607  \\
		& IRQM-exp & 0.38  & 0.404  & 0.35  & 0.906  & 0.33  & 1.697  \bigstrut[b]\\
		\hline
	\end{tabular}%
	\label{Table1:exp_main_result}%
\end{table}%

\begin{figure}[H]
	\centering
	\makeatletter\def\@captype{figure}\makeatother
	\subfigure[$n=1000,~r=100$]{\scalebox{0.3}{\includegraphics{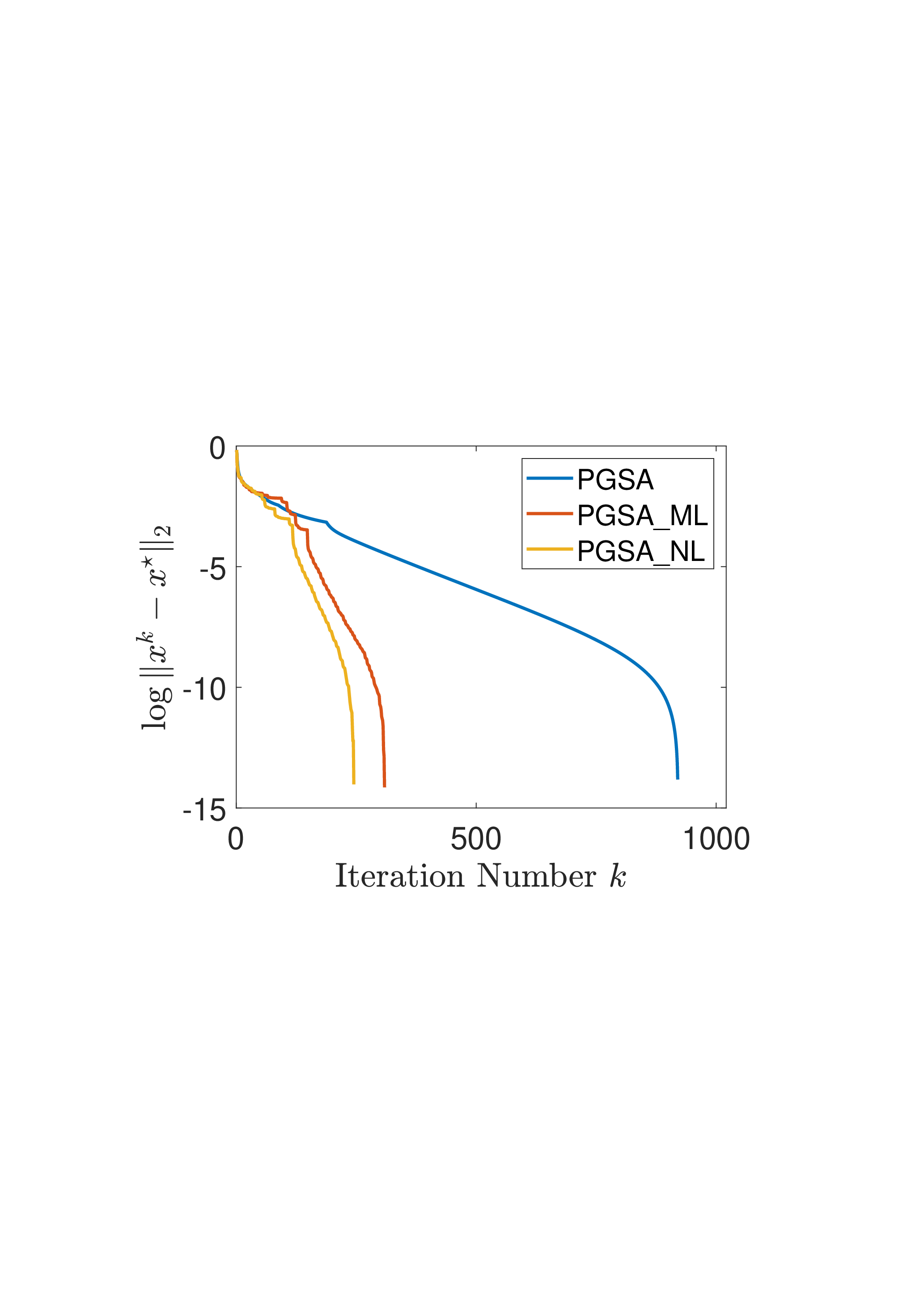}}}
	\subfigure[$n=1500,~r=150$]{\scalebox{0.3}{\includegraphics{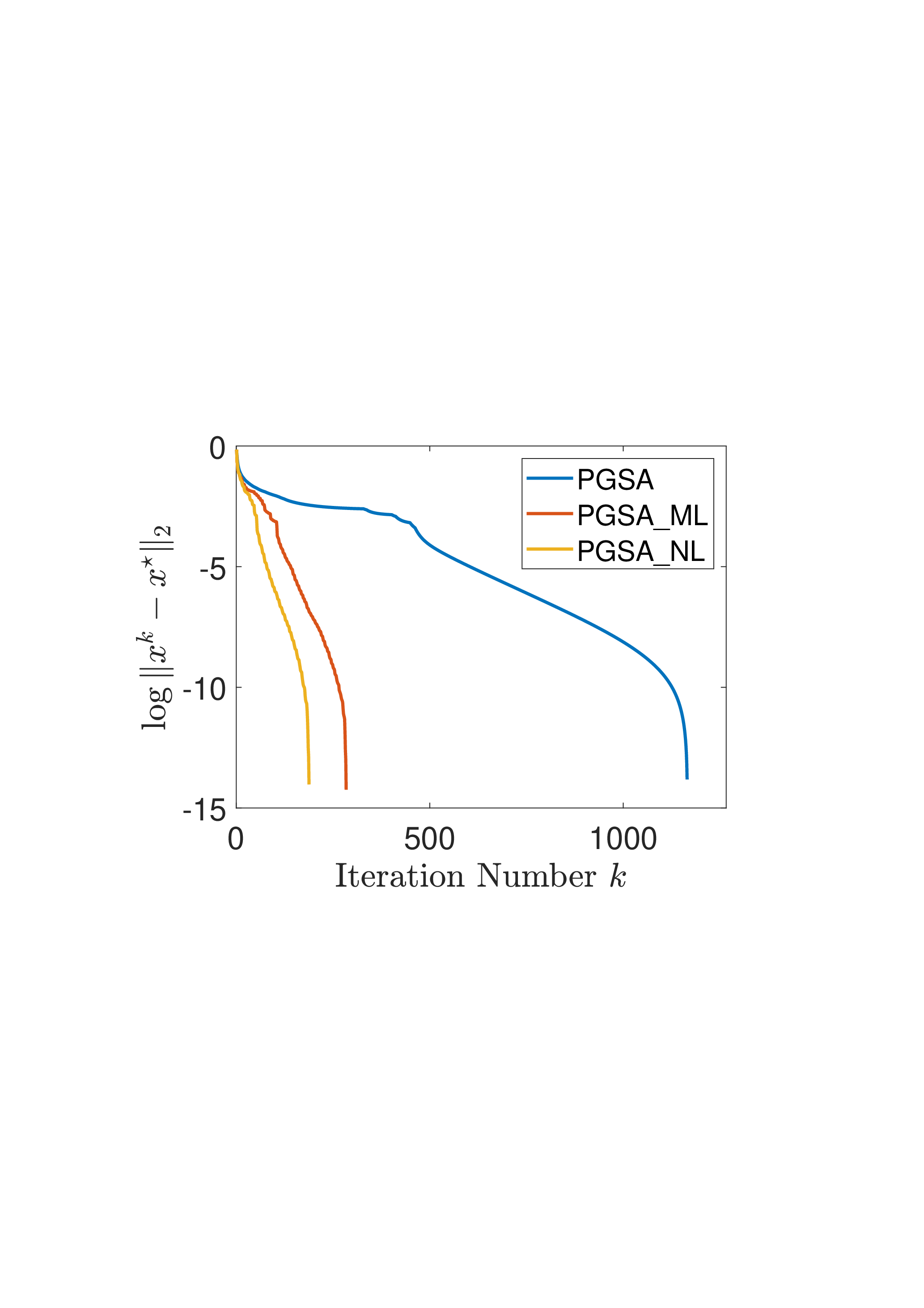}}}
	\subfigure[$n=2000,~r=200$]{\scalebox{0.3}{\includegraphics{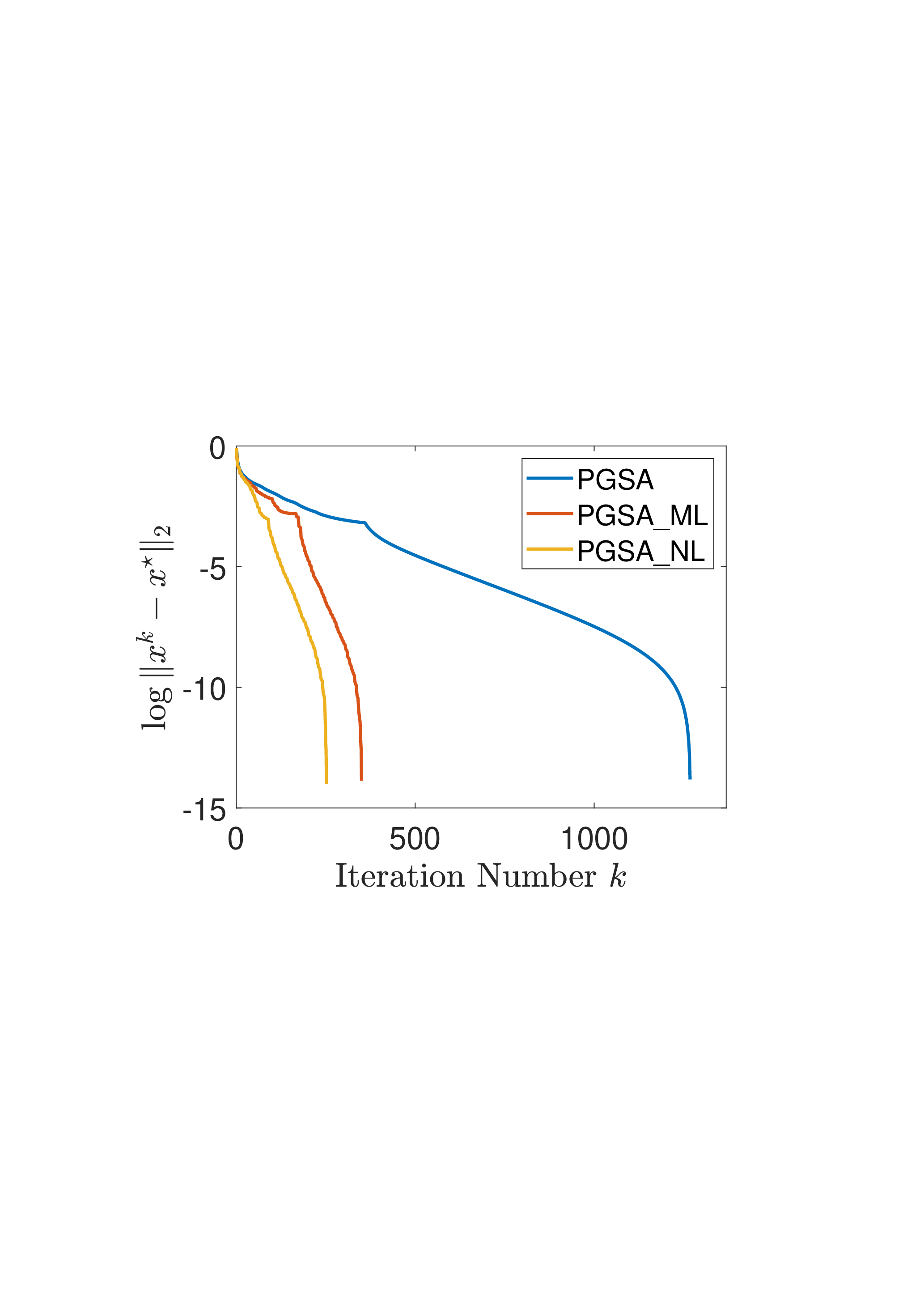}}}
	\caption{Plots of $\|x^k-x^{\star}\|_2$ (in $\log$ scale) for SFDA with different $(n,r)$}
	\label{Figure1:SGEP_SFDA_figure}
\end{figure}

Table \ref{Table1:exp_main_result} reports the computational results averaged over 100 random instances. The two columns for a given $(n,r)$ give the averaged objective value and CPU time (in seconds) of each algorithm. The averaged time $t_L$ of computing $L=\|B\|_2$ is not included in the CPU time column but is reported independently for each dimension $n$. We observe that the proposed algorithms substantially outperform the three IRQM algorithms in terms of CPU time, while the objective values found by the competing algorithms are comparable. In addition, the line-search algorithms PGSA\_ML and PGSA\_NL perform slightly better than PGSA.
Next, we study the convergence rate of the proposed algorithms. In view of Theorem \ref{theorem:ver16_section5_triangle}, one can expect to see R-linear convergence of the sequence generated by PGSA and PGSA\_ML. We plot $\|x^k-x^{\star}\|_2$ (in logarithmic scale) against the number of iterations in Figure \ref{Figure1:SGEP_SFDA_figure}, where $x^{\star}$ is the approximated solution produced by the corresponding algorithm. It is obvious that the sequence generated by PGSA\_ML or PGSA\_NL converges much faster than that by PGSA. As can be seen from Figure \ref{Figure1:SGEP_SFDA_figure}, the sequence generated by PGSA or PGSA\_ML appears to converge R-linearly, which confirms with Theorem \ref{theorem:ver16_section5_triangle}. Finally, we remark that although we have no theoretical results concerning the convergence rate or even convergence of the whole sequence generated by PGSA\_NL, that sequence also seems to converge R-linearly and its convergence rate is slightly faster than that of PGSA\_ML.

Now we consider SSIR for the model $Y = \psi(v_1^TX,...,v_k^TX,\epsilon)$, where $X$ is $n$-dimensional covariates, $Y$ is a univariate response, $\epsilon$ is the stochastic error independent of $X$, and $\psi$ is an unknown link function. Under regularity conditions, the first leading eigenvector of the subspace spanned by $v_1,\cdots,v_k$ can be identified by solving problem \eqref{117E4} with $A=\hat{\Sigma}_{E(X|Y)}$, $B=\hat{\Sigma}_X$, where $\hat{\Sigma}_X$ and $\hat{\Sigma}_{E(X|Y)}$ denote the sample covariance matrix of $X$ and the conditional expectation $E(X|Y)$ respectively. The interested readers can see \cite{Tan-Wang-Liu-Zhang:JRSS2018} and reference therein for more details.

Below we compare the proposed algorithms with IRQM for solving SSIR on 6 real datasets downloaded from scikit-feature selection repository\footnote{https://jundongl.github.io/scikit-feature/datasets.html}, whose characteristics are summarized in Table \ref{Table2:real_dataset}. Also, we set $r = \lceil 0.01 n \rceil$ for each dataset. The computation results are presented in Table \ref{Table2:SSIRexp_main_result}. The objective values and CPU time (in seconds) of the competing algorithms are listed in the two columns for each dataset. Note that the time $t_L$ of computing $L=\|B\|_2$ is not included in the time column but is reported independently for each dataset.

 One can observe that PGSA\_ML and PGSA\_NL significantly outperform the three IRQM algorithms in terms of CPU time. Note that although PGSA substantially outperforms IRQM, it still costs much more CPU time than PGSA\_ML and PGSA\_NL. Since $L$ is large for the real datasets used in this experiment, it is not surprising that PGSA with a small step size $\alpha_k<1/L$ has slower convergence than its line-search counterparts.

% Table generated by Excel2LaTeX from sheet 'Sheet1'
\begin{table}[htbp]
	\centering
	\caption{Characteristics of 6 datasets from scikit-feature selection repository}
	\begin{tabular}{|l|ccc|}
		\hline
		Dataset & BASEHOCK & gisette & Prostate\_GE \bigstrut\\
		\hline
		Number of samples $p$ & 1993  & 7000  & 102 \bigstrut[t]\\
		Number of features $n$ & 4862  & 5000  & 5966 \bigstrut[b]\\
		\hline
		\hline
		Dataset & leukemia & ALLAML & arcene \bigstrut\\
		\hline
		Number of samples $p$ & 72    & 72    & 200 \bigstrut[t]\\
		Number of features $n$ & 7070  & 7129  & 10000 \\
		\hline
	\end{tabular}%
	\label{Table2:real_dataset}%
\end{table}%

% Table generated by Excel2LaTeX from sheet 'Sheet3'
\begin{table}[htbp]
	\centering
	\caption{Computational results for SSIR}
% Table generated by Excel2LaTeX from sheet 'Sheet3'
\begin{tabular}{|l|cc|cc|cc|}
	\hline
	& \multicolumn{2}{c|}{BASEHOCK} & \multicolumn{2}{c|}{gisette} & \multicolumn{2}{c|}{Prostate\_GE} \bigstrut\\
	\cline{2-7}      & \multicolumn{2}{c|}{$t_L=0.16$} & \multicolumn{2}{c|}{$t_L = 0.22$} & \multicolumn{2}{c|}{$t_L=0.23$} \bigstrut\\
	\hline
	Alg.  & Obj   & Time  & Obj   & Time  & Obj   & Time \bigstrut\\
	\hline
	PGSA  & 2.68  & 0.41  & 1.44  & 0.55  & 1.16  & 4.18  \bigstrut[t]\\
	PGSA\_ML & 2.25  & 0.03  & 1.36  & 0.05  & 1.10  & 0.07  \\
	PGSA\_NL & 2.26  & 0.06  & 1.36  & 0.02  & 1.10  & 0.05  \\
	IRQM-log & 2.67  & 6.27  & 1.58  & 6.54  & 1.17  & 10.03  \\
	IRQM-Lp & 2.77  & 5.56  & 1.59  & 6.32  & 1.18  & 9.81  \\
	IRQM-exp & 2.65  & 6.29  & 1.58  & 6.54  & 1.17  & 10.11  \bigstrut[b]\\
	\hline
	\hline
	& \multicolumn{2}{c|}{leukemia} & \multicolumn{2}{c|}{ALLAML} & \multicolumn{2}{c|}{arcene} \bigstrut\\
	\cline{2-7}      & \multicolumn{2}{c|}{$t_L=0.31$} & \multicolumn{2}{c|}{$t_L=0.33$} & \multicolumn{2}{c|}{$t_L=0.65$} \bigstrut\\
	\hline
	Alg.  & Obj   & Time  & Obj   & Time  & Obj   & Time \bigstrut\\
	\hline
	PGSA  & 1.05  & 7.41  & 1.06  & 5.63  & 1.29  & 24.01  \bigstrut[t]\\
	PGSA\_ML & 1.04  & 0.08  & 1.04  & 0.08  & 1.18  & 0.24  \\
	PGSA\_NL & 1.03  & 0.13  & 1.04  & 0.07  & 1.18  & 0.21  \\
	IRQM-log & 1.07  & 14.19  & 1.07  & 16.27  & 1.58  & 46.83  \\
	IRQM-Lp & 1.07  & 13.71  & 1.08  & 16.40  & 1.76  & 41.17  \\
	IRQM-exp & 1.07  & 14.20  & 1.07  & 16.74  & 1.58  & 46.54  \bigstrut[b]\\
	\hline
\end{tabular}%

	\label{Table2:SSIRexp_main_result}%
\end{table}%

To conclude, our experiments for SGEP on both synthetic and real datasets demonstrate the efficiency of the proposed algorithms for solving SGEP.

\subsection{{\boldmath$\ell_1/\ell_2$} sparse signal recovery}

In this subsection, we consider the $\ell_1/\ell_2$ based sparse signal recovery problem, which uses the $\ell_1/\ell_2$ regularization to find a sparse solution of the linear system $Ax=b$, where $A\in\mathbb{R}^{m\times n}$ and $b\in\mathbb{R}^m$ are given. In \cite{Lou-Dong-Wang:2019SIAM}, this problem is formulated into
\begin{equation}\label{problem:L1dL2 box}
\min\left\{\frac{\|x\|_1}{\|x\|_2}:~Ax=b,~\underline{x}\leq x \leq \overline{x},~x\in\mathbb{R}^n \right\},
\end{equation}
where $\underline{x}$, $\overline{x}$ are the lower and upper bounds for the underlying signal. It is not hard to see that problem \eqref{problem:L1dL2 box} is a special case of problem \eqref{problem:root} with $h=0$, $g=\|\cdot\|_2$ and $f=\|\cdot\|_1+\iota_{S_1}$, with $S_1 = \{x\in\mathbb{R}^n:Ax=b,\underline{x}\leq x \leq \overline{x} \}$. Due to $h=0$, PGSA\_ML and PGSA\_NL coincide with PGSA for problem \eqref{problem:L1dL2 box}. In order to apply the line-search scheme, we introduce the following penalty problem of \eqref{problem:L1dL2 box}:
\begin{equation}\label{problem:L1dL2penalty box}
\min\left\{\frac{\lambda \|x\|_1 + \frac{1}{2}\|Ax-b\|^2_2}{\|x\|_2}:~\underline{x}\leq x \leq \overline{x},~x\in\mathbb{R}^n \right\},
\end{equation}
where $\lambda>0$ denotes the penalty parameter. Clearly, problem \eqref{problem:L1dL2penalty box} is also a special instance of problem \eqref{problem:root} with $g=\|\cdot\|_2$, $h=\frac{1}{2}\|A\cdot-b\|^2_2$ and $f=\lambda\|\cdot\|_1+\iota_{S_2}$, where $S_2=\{x\in\mathbb{R}^n: \underline{x}\leq x \leq \overline{x} \}$.

In the experiments, we adopt a simulation setting similar to that of \cite{Lou-Dong-Wang:2019SIAM}. The matrix $A$ is generated by oversampled discrete cosine transformation (DCT), i.e., $A=[a_1,a_2,\cdots,a_n] \in\mathbb{R}^{m\times n}$ with
\begin{equation*}
a_j = \frac{1}{\sqrt{m}}\cos\left( \frac{2\pi w j}{F} \right),~ j=1,2,\cdots,n.
\end{equation*}
Here $w\in\mathbb{R}^m$ is a random vector following the uniform distribution in $[0,1]^m$ and $F>0$ is a parameter measuring how coherent the matrix is. For the ground truth signal $\tilde{x}\in\mathbb{R}^n$, we randomly choose a support set of size $K$ and generate $\tilde{x}$ supported on this set with i.i.d standard Gaussian entries $\mathcal{N}(0,1)$. Then $\tilde{x}$ is normalized to have unit norm and correspondingly we set $\underline{x} = -\mathbf{1}_n$ and $\overline{x} =\mathbf{1}_n$, where $\mathbf{1}_n$ denotes the $n$-dimensional vector with all entries being $1$. Throughout this experiment, we consider the above matrix $A$ of size $(m,n)=(64,1024)$, $F\in\{1,5 \}$ and the ground truth $\tilde{x}$ has sparsity $K=12$.

We consider in the experiments PGSA for problem \eqref{problem:L1dL2 box} and its line-search counterparts for problem \eqref{problem:L1dL2penalty box} with $\lambda=8\times 10^{-5}$ as well as the alternating direction method of multipliers for solving problem \eqref{problem:L1dL2 box} ($L_1/L_2$-ADMM), which is recently proposed in \cite{Lou-Dong-Wang:2019SIAM}.
The implementation details of these algorithms are discussed below. For computing the proximity operator of $\alpha\|\cdot\|_1+\iota_{S_1}$ with $\alpha>0$ in PGSA, we reformulate the related problem into a quadratic programming with linear constraints and then solve it with a commercial software called Gurobi\footnote{https://www.gurobi.com/}. Note that PGSA\_ML and PGSA\_NL both involve the proximity operator of $f=\alpha\lambda\|\cdot\|_1+\iota_{S_2}$ with $\alpha>0$, which has a closed form solution.
Let $z\in\mathbb{R}^n$, one can check that for $j=1,2,\cdots,n$,
\begin{equation*}
(\mathrm{prox}_{\alpha \lambda\|\cdot\|_1+\iota_{S_2}} (z))_j =
\begin{cases}
\underline{x}_j, & \hat{z}_j < \underline{x}_j,\\
\hat{z}_j, & \underline{x} \leq \hat{z}_j \leq \overline{x}_j,\\
\overline{x}_j, & \hat{z}_j > \overline{x}_j,\\
\end{cases}
\end{equation*}
where $\hat{z}_j = \max\{0,|z_j|-\alpha\lambda \}\, \mathrm{sign}(z_j)$. For PGSA\_ML and PGSA\_NL, the parameters are set the same as those in Section \ref{ssection:SFDA and SSIR} except that $L=\|A\|_2^2$ and $\underline{\alpha} = \alpha_{0,0} = 1.99/L$, since $f$ is convex in problem \eqref{problem:L1dL2penalty box}. The Matlab source code for $L_1/L_2$-ADMM is available online\footnote{https://sites.google.com/site/louyifei/Software}. Following the notations in \cite{Lou-Dong-Wang:2019SIAM}, we set the parameters $\rho_1=\rho_2=2000$ for $L_1/L_2$-ADMM.

To obtain an initial point for the competing algorithms, we solve the $\ell_1$-based sparse recovery problem (which replace $\|\cdot\|_1/\|\cdot\|_2$ by $\|\cdot\|_1$ in problem \eqref{problem:L1dL2 box}) by Gurobi. All the algorithms are terminated once the iteration number exceeds $10n=10240$ or $\|x^k-x^{k-1}\|_2 / \|x^k\|_2 \leq 10^{-8}$.

The accuracy of the algorithms is evaluated in terms of success rate, defined as the number of successful trials over the total number of trials. A success is declared when the relative error of the output $x^{\star}$ to the ground truth $\tilde{x}$ is less than $10^{-3}$, that is, $\|x^{\star}-\tilde{x}\|_2/\|\tilde{x}\|_2 < 10^{-3}$. For each $F$, we run all the competing algorithms for 100 trials. Table \ref{Table3: CS_exp_main_result} summarizes the computational results by listing  the value of $\|\cdot\|_1/\|\cdot\|_2$, the averaged CPU time (in seconds) and the success rate for all the algorithms.
The CPU time for computing the initial point is not included in the time column, since all the algorithms use the same initial guess.
We can see the success rate and the value of $\|\cdot\|_1/\|\cdot\|_2$ obtained by PGSA\_ML and PGSA\_NL are comparable to those of PGSA and $L_1/L_2$-ADMM, which are developed for problem \eqref{problem:L1dL2 box}. In terms of CPU time, PGSA\_ML and PGSA\_NL substantially outperform $L_1/L_2$-ADMM, while PGSA performs slightly better than $L_1/L_2$-ADMM. These results demonstrate the efficiency of the proposed algorithms for $\ell_1/\ell_2$ sparse signal recovery.

% Table generated by Excel2LaTeX from sheet 'Sheet3(简)'
\begin{table}[htbp]
	\centering
	\caption{Computational results for $\ell_1/\ell_2$ sparse signal recovery}
	\begin{tabular}{|l|ccc|ccc|}
		\hline
		& \multicolumn{3}{c|}{$F=1$} & \multicolumn{3}{c|}{$F=5$} \bigstrut\\
		\hline
		Alg.  & Obj   & Time  & Success & Obj   & Time  & Success \bigstrut\\
		\hline
		%$\ell_1$-solver & 3.293  & 0.070  & 26\%  & 3.128  & 0.070  & 44\% \bigstrut[t]\\
		$L_1/L_2$-ADMM & 2.845  & 0.212  & 97\%  & 2.852  & 0.278  & 86\% \bigstrut[t]\\
		PGSA  & 2.843  & 0.121  & 97\%  & 2.850  & 0.148  & 86\% \\
		PGSA\_ML & 2.845  & 0.052  & 97\%  & 2.854  & 0.064  & 86\% \\
		PGSA\_NL & 2.844  & 0.048  & 97\%  & 2.854  & 0.055  & 86\% \bigstrut[b]\\
		\hline
	\end{tabular}%
	\label{Table3: CS_exp_main_result}%
\end{table}%

\section{Conclusion}
In this paper, we study a class of single-ratio fractional optimization problems that appears frequently in applications. The numerator of the objective is the sum of a nonsmooth nonconvex function and a nonconvex smooth function, while the denominator is a nonsmooth convex function. We derive a first-order necessary optimality condition for this problem and develop for it first-order algorithms, namely, PGSA, PGSA\_ML and PGSA\_NL. We show the subsequential convergence of the sequence generated by the proposed algorithms under mild assumptions. Moreover, we establish global convergence of the whole sequence generated by PGSA or PGSA\_ML and estimate the convergence rate by additional assumptions on the objective. The proposed algorithms are further applied to solving the sparse generalized eigenvalue problems and their convergence results for the problem are gained according to the general convergence theorems for them. Finally, we conduct some preliminary numerical experiments to illustrate the efficiency of the proposed algorithms.

%=================================  main body  =========================================================
\appendix
\section{Proof of Proposition \ref{ppsition:2.2-1}}\label{appendixA:proof_pposition2.2}
\begin{proof}
    First we consider the case where $x$ is an isolated point of $\dom(\rho)$. Since $a_2=f_2(x)>0$ and $f_2$ satisfies the calmness condition at x, we deduce that $x$ is an also an isolated point of $\dom(f_1)$. Hence, in this case it is trivial that $\widehat{\partial} \rho(x) = \widehat{\partial}(a_2 f_1-a_1 f_2)=\mathbb{R}^n$. Next we consider the case where $x$ is not an isolated point of $\dom(\rho)$. For any $u\in\dom(\rho)$ and $v\in\mathbb{R}^n$, a direct computation yields
	\begin{equation*}
	\frac{\frac{f_1(u)}{f_2(u)}-\frac{a_1}{a_2}-\langle v,u-x\rangle}{\|u-x\|_2}
	=
	\frac{a_2 f_1(u)-a_1 f_2(u) - \langle a_2^2 v,u-x\rangle}{a_2^2\|u-x\|_2}+R(x,u),
	\end{equation*}
	where $R(x,u) =(a_2-f_2(u))(a_2f_1(u)-a_1 f_2(u))/(a^2_2 f_2(u)\|u-x\|_2) $. Since $f_2$ satisfies the calmness condition and $f_1$ is continuous at $x$, we get that $\lim\limits_{\substack{u\to x\\u\in\dom(f_1)}} R(x,u) = 0$. Noting this fact and by the definition of Fr{\'e}chet subdifferential, we have
	\begin{align*}
	\widehat{\partial} \rho(x)
	&= \left\{v\in\mathbb{R}^n:~\mathop{\lim\inf}\limits_{\substack{u\to x\\u\neq x\\u\in\dom(\rho)}}\,\frac{\frac{f_1(u)}{f_2(u)}-\frac{a_1}{a_2}-\langle v,u-x\rangle}{\|u-x\|_2}\geq 0\right\}\\
	&= \left\{v\in\mathbb{R}^n:~\mathop{\lim\inf}\limits_{\substack{u\to x\\u\neq x\\u\in\dom(f_1)}} \,\frac{a_2 f_1(u)-a_1 f_2(u) - \langle a_2^2 v,u-x\rangle}{a_2^2\|u-x\|_2}\geq 0\right\}\\[8pt]
	&= \frac{\widehat{\partial}(a_2 f_1-a_1 f_2)(x)}{a_2^2}.
	\end{align*}
	We complete the proof.
\end{proof}

\section{Proof of Proposition \ref{2.2ppsition}}\label{appendixA2:proof_pposition3.1}
\begin{proof}
	We only need to prove the proposition holds for local minimizers, since the conclusion for global minimizers can be proven similarly.
	
	Suppose $x^\star$ is a local minimizer of problem \eqref{problem:root}. 
	Then, there exists $\delta >0$ such that for any $x \in B(x^{\star},\delta)\cap\dom(F)$, there holds
	\begin{equation}\label{formula:3.1ppsition1-1}
	0 \leq \frac{f(x)+h(x)}{g(x)} - \frac{f(x^\star)+h(x^\star)}{g(x^\star)}.
	\end{equation}
	This indicates that
	\begin{equation}\label{formula:3.0ppsition1-2}
	0 \leq f(x)+h(x)-\frac{f(x^\star)+h(x^\star)}{g(x^\star)}g(x) = f(x) + h(x) - c_\star g(x)
	\end{equation}
	for all $x \in B(x^{\star},\delta)\cap\dom(F)$, since $g(x)>0$. Due to the fact that the objective function value of problem \eqref{problem:root2} at $x^\star$ is 0, we have that $x^\star$ is a local minimizer of problem \eqref{problem:root2}.
	
	Conversely, if $x^\star$ is a local minimizer of problem \eqref{problem:root2}, then \eqref{formula:3.0ppsition1-2} holds for $x \in B(x^{\star},\delta)\cap\dom(F)$ with some $\delta>0$. By simple calculation, we obtain that \eqref{formula:3.1ppsition1-1} holds for $x \in B(x^{\star},\delta)\cap\dom(F)$. This implies that $x^\star$ is a local minimizer of problem \eqref{problem:root}. We then complete the proof.
\end{proof}

\section{Proof of Lemma \ref{lemma:7.0-1}}\label{appendixC}
\begin{proof}
	By the definition of Fr{\'e}chet subdifferential, we have that
	\begin{equation*}
	\widehat{\partial} \iota_C(x) = \left\{v\in\mathbb{R}^n:~ \mathop{\lim\inf}\limits_{\substack{y\to x\\y\neq x\\y\in C}}\frac{\langle v,x-y\rangle}{\|x-y\|_2}\geq 0 \right\}.
	\end{equation*}
	Let $\Lambda := \supp (x)$. We first prove Item {\upshape(\romannumeral 1)}. In the case that $|\Lambda| = r$, there exists a neighborhood $U$ of $x$, such that $\supp (y)=\Lambda$ for all $y\in U \cap C$. Thus, we obtain that
	\begin{equation*}
	\widehat{\partial}\iota_C(x)
	%&= \left\{ v\in\mathbb{R}^n:~\mathop{\lim\inf}\limits_{\substack{y_{\Lambda}\to x_{\Lambda}\\y_{\Lambda^C}=0\\\|y_{\Lambda}\|_2=1}}\frac{\langle v,x-y\rangle}{\|x-y\|_2}\geq 0 \right\}\\
	=\left\{ v\in\mathbb{R}^n:~\mathop{\lim\inf}\limits_{\substack{y_{\Lambda}\to x_{\Lambda}\\y_{\Lambda}\neq x_{\Lambda}\\\|y_{\Lambda}\|_2=1}} \frac{\langle v_{\Lambda},x_{\Lambda}-y_{\Lambda}\rangle}{\|x_{\Lambda}-y_{\Lambda}\|_2}\geq 0 \right\}
	=\{ v\in\mathbb{R}^n:~v_{\Lambda} = tx_{\Lambda},~t\in\mathbb{R} \}.
	\end{equation*}
	
	Next we consider the case when $|\Lambda|<r$. For any $t\in\mathbb{R}$, we have
	\begin{equation*}
	\lim\limits_{\substack{y\to x\\y\in C}}\left|\frac{\langle tx,x-y\rangle}{\|x-y\|_2}\right|
	=\lim\limits_{\substack{y\to x\\y\in C}}\frac{t\|x\|^2_2-tx^Ty}{\sqrt{\|x\|^2_2+\|y\|^2_2-2x^Ty}}
	=\lim\limits_{\substack{y\to x\\y\in C}} \frac{t(1-x^Ty)}{\sqrt{2(1-x^Ty)}}
	=0.
	\end{equation*}
	Hence, we see that $\{v\in\mathbb{R}^n:v=tx,t\in\mathbb{R} \}\subseteq \widehat{\partial} \iota_C(x)$. We further note that for any $v\in\widehat{\partial}\iota_C(x)$,
	\begin{equation*}
	0\leq
	\mathop{\lim\inf}\limits_{\substack{y\to x\\y\neq x\\y\in C}}\frac{\langle v,x-y\rangle}{\|x-y\|_2}
	\leq
	\mathop{\lim\inf}\limits_{\substack{y_{\Lambda}\to x_{\Lambda}\\y_{\Lambda}\neq x_{\Lambda}\\\|y_{\Lambda}\|_2=1}}
	\frac{\langle v_{\Lambda},x_{\Lambda}-y_{\Lambda}\rangle}{\|x_{\Lambda}-y_{\Lambda}\|_2},
	\end{equation*}
	which indicates that $v_{\Lambda} = tx_{\Lambda}$ for some $t\in\mathbb{R}$. Finally, we show that for all $v\in\widehat{\partial} \iota_C(x)$, $v_j=0$ if $j\notin \Lambda$. Otherwise, there exists $\tilde{v}\in\widehat{\partial}\iota_C(x)$ and $j_0\notin \Lambda$ such that $\tilde{v}_{j_0}\neq 0$. Choose $\{y^k:k\in\mathbb{N} \}$ such that $y^k_{\Lambda} = \sqrt{1-1/k^2}x_{\Lambda}$, $y^k_{j_0} = v_j/(k|v_j|)$, and $y^k_j=0$ for $j\notin\Lambda\cup\{j_0\}$. Then we have that $\{y^k:k\in\mathbb{N} \}\subseteq C$ and $\lim\limits_{k\to\infty}y^k=x$. One can verify that $\lim\limits_{k\to\infty}\innerP{\tilde{v}}{x-y}/\|x-y\|_2=-|v_{j_0}|<0$, which contradicts $\tilde{v}\in\widehat{\partial}\iota_C(x)$. This proves Item {\upshape(\romannumeral 1)}.
	
	We turn to Item {\upshape(\romannumeral 2)}. Take any $v\in\partial\iota_C(x)$. By the definition of limiting-subdifferential, there exist $x^k\in C$ and $v^k\in\widehat{\partial}\iota_C(x^k)$ for $k\in\mathbb{N}$, such that $\lim\limits_{k\to\infty} x^k=x$ and $\lim\limits_{k\to\infty} v^k=v$. Hence, we deduce that $\Lambda \subseteq\supp (x^k)$ when $k\geq K$ for some $K\in\mathbb{N}$. Invoking Item {\upshape(\romannumeral 1)}, there exists $\{t_k\in\mathbb{R}:k\geq K \}$ such that $v^k_{\Lambda} = t_kx^k_{\Lambda}$ for $k\geq K$. Let $i_0\in\Lambda$. Then we have $\lim\limits_{k\to\infty} t_k  = \lim\limits_{k\to\infty}\frac{v_{i_0}^k}{x_{i_0}^k} = \frac{v_{i_0}}{x_{i_0}}$. Therefore, we obtain $v_{\Lambda} = \lim\limits_{k\to\infty} t_k x^k_{\Lambda} = \frac{v_{i_0}}{x_{i_0}}x_{\Lambda}$. This complete the proof.
	
	Finally we prove that Item {\upshape(\romannumeral 3)}. When $r=n$, one can easily deduce that $\widehat{\partial}\iota_C(x) = \{v\in\mathbb{R}^n:v=tx,t\in\mathbb{R} \}$ for $x\in C$ from Item {\upshape(\romannumeral 1)}. Take any $v\in\partial\iota_C(x)$. Following a similar argument to proving Item {\upshape(\romannumeral 2)}, we can show that $v=tx$ for some $t\in\mathbb{R}$. This together with $\widehat{\partial}\iota_C(x)\subseteq\partial\iota_C(x)$ yields Item {\upshape(\romannumeral 3)}.	
\end{proof}

\bibliographystyle{siam}

%\bibliography{ZhNaPhD}

\end{document}